\newcommand{\Tableau}[2][sY]{{\text{\tableau[#1]{#2}}}}
\title{Peterson Isomorphism in $K$-theory and Relativistic Toda Lattice}
\author[1]{Takeshi Ikeda}
\author[2]{Shinsuke Iwao}
\author[3]{Toshiaki Maeno}
\affil[1]{Department of Applied Mathematics, Okayama University of Science, 
1-1 Ridai-cho, Kita-ku, Okayama-shi, 
Okayama 700-0005, Japan}
\affil[2]{Department of Mathematics, Tokai University, 
4-1-1 Kitakane, Hiratsuka, Kanagawa-shi, 
Kanagawa 259-1292, Japan}
\affil[3]{
Department of Mathematics, Meijo University, 1-501 Shiogamaguchi, Tenpaku-ku, Nagoya-shi, 
Aichi 468-0073, Japan
}
\date{}
\newtheorem{theorem}{Theorem}[section]
\newtheorem{lemma}[theorem]{Lemma}
\newtheorem{proposition}[theorem]{Proposition}
\newtheorem{corollary}[theorem]{Corollary}
\newtheorem{example}[theorem]{Example}
\newtheorem{remark}[theorem]{Remark}
\newtheorem{definition}[theorem]{Definition}
\newtheorem{conjecture}[theorem]{Conjecture}
\newcommand\PP{\ensuremath{\mathbb{P}}}
\newcommand\ZZ{\ensuremath{\mathbb{Z}}}
\newcommand\CC{\ensuremath{\mathbb{C}}}
\newcommand\zet[1]{\left\vert {#1} \right\vert}
\newcommand{\uni}{\mathrm{uni}}
\DeclareRobustCommand\widecheck[1]{{\mathpalette\@widecheck{#1}}}
\def\@widecheck#1#2{%
    \setbox\z@\hbox{\m@th$#1#2$}%
    \setbox\tw@\hbox{\m@th$#1%
       \widehat{%
          \vrule\@width\z@\@height\ht\z@
          \vrule\@height\z@\@width\wd\z@}$}%
    \dp\tw@-\ht\z@
    \@tempdima\ht\z@ \advance\@tempdima2\ht\tw@ \divide\@tempdima\thr@@
    \setbox\tw@\hbox{%
       \raise\@tempdima\hbox{\scalebox{1}[-1]{\lower\@tempdima\box
\tw@}}}%
    {\ooalign{\box\tw@ \cr \box\z@}}}
\def\mWord#1{\text{\ $\mWord@A{#1}$}} %
\def\mWord@A#1{%
\bgroup
\let\\=\crcr
\def\,{& &}
\def\u##1{\underline{##1}}
\def\d##1{\dot{##1}}
  \vcenter{\offinterlineskip
      \halign{
       &\vrule width 0pt height 8pt depth 3pt {\hfil${##}$\hfil} \!\! & \!\!{##},\! \cr #1 \crcr
      }
  }
\egroup
}
\newcommand{\Array}[1]{\left(\!\mWord{#1}\!\right)}
\newcommand\V[1]{\pmb{#1}}
\newcommand\ouni{\mathscr{O}_{\mathrm{uni}}}
\begin{document}
\maketitle
\begin{abstract} 
The $K$-homology ring of the affine Grassmannian of $SL_n(\CC)$  
was studied by Lam, Schilling, and Shimozono. It 
is realized as a certain concrete Hopf subring 
of the ring of symmetric functions.  
On the other hand, for the quantum $K$-theory of the flag variety $Fl_n$, 
Kirillov and Maeno provided 
a conjectural presentation based on the results obtained by Givental and Lee. 
We construct an explicit 
birational morphism between the spectrums of these two rings. Our method relies on  
Ruijsenaars's relativistic Toda lattice with unipotent initial condition.
From this result, we obtain a $K$-theory analogue of the so-called Peterson isomorphism for (co)homology. 
We provide a conjecture on the detailed relationship between the Schubert bases, and, in particular, we determine the image of Lenart--Maeno's quantum Grothendieck polynomial
associated with a Grassmannian permutation. 
\end{abstract}

  \section{Introduction}
Let $Fl_n$ be the variety of complete flags $V_\bullet=(V_1\subset \cdots\subset V_n=\mathbb{C}^n)$ in $\mathbb{C}^n$, which is a homogeneous  
space $G/B$, where $G=SL_n(\mathbb{C})$ and $B$ is the Borel subgroup of the 
upper triangular matrices in $G.$
Let $K(Fl_n)$ be the Grothendieck ring of coherent 
sheaves on $Fl_n.$ 
Givental and Lee \cite{givental2003quantum}
studied the quantum $K$-theory
$QK(Fl_n)$, 
which is a 
ring defined as a deformation of  $K(Fl_n)$ (see \cite{giventalWDVV, Lee2001} for general construction of quantum $K$-theory).
Similar to Givental--Kim's presentation (see \cite{givental1995quantum, KimAnnals1999}) for the
quantum cohomology ring $QH^*(Fl_n)$, Kirillov and Maeno \cite{Kirillov-Maeno} provided a conjectural presentation for $QK(Fl_n)$, which 
we denote temporally by $\mathcal{QK}(Fl_n).$ 
Let $Gr_{SL_n}=G\left(\mathbb{C}(\!(t)\!)\right)/G(\mathbb{C}[[t]])$ be the {\it affine Grassmannian\/} of $G=SL_n$, whose 
$K$-homology $K_*(Gr_{SL_n})$ has a natural structure of  
a Hopf algebra. 
Lam, Schilling, and Shimozono \cite{LSS} constructed a Hopf isomorphism between $K_*(Gr_{SL_n})$ 
and the subring $\Lambda_{(n)}:=\CC[h_1,\dots,h_{n-1}]$ of the ring $\Lambda$ of symmetric functions.
The first main result of this paper is an explicit 
ring isomorphism between 
$K_*(Gr_{SL_n})$ and
$\mathcal{QK}(Fl_n)$ after appropriate localization. 

The corresponding result in (co)homology, for a semisimple linear algebraic group $G$, is called the {\it Peterson
isomorphism\/}; this result was presented in lectures by Peterson in MIT, 1997,
and published in a paper by Lam and Shimozono \cite{Lam2010} on torus-equivariant and parabolic settings. 

{We also provide a conjecture that describes a detailed 
correspondence between the Schubert bases for
$K_*(Gr_{SL_n})$ and
$\mathcal{QK}(\mathcal{F}l_n)$ (Conjecture \ref{tildeg}). 
}

Our method of constructing the isomorphism
relies on Ruijsenaars's relativistic Toda lattice  \cite{ruijsenaars1990relativistic}. 
{Note that a} similar approach to the original Peterson isomorphism for $SL_n$ by solving the non-relativistic Toda lattice was given by 
Lam and Shimozono \cite{lam2011double, lamshimo2010toda}
and by Kostant \cite{Kostant1996Toda}. 

It is natural to ask how $K$-theoretic Peterson isomorphisms for a general semisimple linear algebraic group $G$ should be constructed. 
There are some results indicating that an approach to this problem using integrable systems would be fruitful. The relativistic Toda lattice associated with any root system 
was introduced by Kruglinskaya and Marshakov in \cite{KruglinskayaMarshakov2015}. 
The commuting family of $q$-difference Toda operators given by Etingof, as well as 
general such operators constructed by using a quantized enveloping 
algebra $U_q(\mathfrak{g})$ of a complex semisimple Lie algebra $\mathfrak{g}$, 
were discussed by Givental and Lee \cite{givental2003quantum}. 
As for $K$-homology of the affine Grassmannian: 
Bezrukavnikov, Finkelberg, and Mirkovi\'c \cite{bezrukavnikov2005equivariant} showed that the spectrum of the $G(\mathbb{C}[[t]])$-equivariant $K$-homology ring of ${Gr}_G$ is naturally identified with the {\it universal centralizer\/} of the Langlands dual group of $G$. 

\subsection{Relativistic Toda lattice}
The {\it relativistic Toda lattice\/}, introduced by Ruijsenaars \cite{ruijsenaars1990relativistic},
is a completely integrable Hamiltonian system. 
This system can be viewed as an isospectral deformation of the 
Lax matrix $L=AB^{-1}$ with 
\begin{equation}
A=\begin{pmatrix}
z_1 & -1 &  & & \\
 & z_2 & -1&  & \\
 & & \ddots & \ddots & \\
 &  &    &z_{n-1}& -1\\
 & & & & z_n
\end{pmatrix},\quad 
B=
\begin{pmatrix}
1 &  &  && \\
-{Q_1z_1}& \!\!\!\!\!\!1& &  & \\
 & \ddots & \ddots & & \\
 &  &    &1\;\;\;\;& \\
 & &&\!\!\!\!\!\!\!\!\!\!\!\!\!\!-Q_{n-1}z_{n-1}  &\!\!\!\!1
\end{pmatrix},\label{eq:L}
\end{equation}
where $z_i$ and $Q_j$ are complex numbers.
The relativistic Toda lattice is a partial differential equation with independent 
variables $t_1,\ldots,t_{n-1}$
 expressed in the Lax form:
\begin{equation}
\frac{d L}{dt_i}=[L,(L^i)_{<}]\quad (i=1,\ldots,n-1),
\end{equation}
where $(L^i)_{<}$ denotes the strictly lower triangular part of 
$L^i.$
Consider the characteristic polynomial of $L$:
\[
\Psi_L(\zeta)=
\det(\zeta\cdot 1-L)=\zeta^n+\sum_{i=1}^n (-1)^i
F_i(z,Q)\zeta^{n-i}.\] 
More explicitly, 
we have
\begin{equation}
F_i(z_1,\ldots,z_n,Q_1,\ldots,Q_{n-1})=\sum_{\substack{I\subset \{1,\ldots,n\}\\\;\# I=i}}\prod_{j\in I}z_j\prod_{j\in I,\;j+1\notin I }(1-Q_j),
\end{equation}
where $Q_n:=0$.
Let $\gamma=(\gamma_1,\ldots,\gamma_{n})\in \mathbb{C}^{n}$. In this paper, we assume
$\gamma_n=1$ and consider 
the {\it isospectral variety\/}
\begin{equation}
Z_\gamma:=
\{(z,Q)\in \mathbb{C}^{2n-1}\;|\;F_i(z,Q)=\gamma_i\;(1\leq i\leq n)\}.\label{eq:defZgamma}
\end{equation}
This is an affine algebraic variety with coordinate ring
\begin{equation*}
\mathbb{C}[Z_\gamma]=\mathbb{C}[z_1,\ldots,z_n,Q_1,\ldots,Q_{n-1}]
/I_\gamma,\quad I_\gamma=\langle F_i(z,Q)-\gamma_i\;(1\leq i\leq n)\rangle.
\end{equation*}
Note that $Z_\gamma$ is isomorphic to a locally closed subvariety of $SL_n(\mathbb{C})$
(see \S \ref{ssec:Lax}).

\subsection{Quantum $K$-theory of flag variety}
A particularly interesting case occurs when 
\[\gamma_i={\binom{n}{i}}\quad (1\leq i\leq n).\]
Then, the corresponding Lax matrix $L$ has
characteristic polynomial $(\zeta-1)^n$.
In fact, in this case, $L$ is principal unipotent in the sense that  
it has only one Jordan block such that the eigenvalues are all $1$, so we say the corresponding isospectral variety is {\it unipotent\/} and denote it by 
 $Z_\uni.$
 Let $\mathcal{L}_i$ be the {\it tautological line bundle\/} whose fiber 
 over $V_\bullet\in Fl_n$ is $V_i/V_{i-1}.$ 
\begin{conjecture}[Kirillov--Maeno \cite{Kirillov-Maeno}]
\label{conj:KM}
There is a canonical ring isomorphism 
\begin{equation}
QK(Fl_n)\simeq \mathbb{C}[Z_{\mathrm{uni}}],\label{eq:LM}
\end{equation}
with $z_i$ identified with the class of 
the tautological 
line bundle $\mathcal{L}_i$.
\end{conjecture}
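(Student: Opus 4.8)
The plan is to derive the conjecture from the known structure of the quantum $K$-theory of $Fl_n$, as set up by Givental and Lee, combined with a dimension count on the isospectral variety. Write $QK(Fl_n)$ for Givental--Lee's small quantum $K$-ring; it is a deformation of $K(Fl_n)$ over $\mathbb{C}[Q_1,\dots,Q_{n-1}]$ (equivalently over $\mathbb{C}[[Q]]$), free of rank $n!=|W|$, specializing to $K(Fl_n)$ at $Q=0$. One would first build the $\mathbb{C}[Q]$-algebra homomorphism
\[
\phi\colon\ \mathbb{C}[z_1,\dots,z_n,Q_1,\dots,Q_{n-1}]\longrightarrow QK(Fl_n),\qquad z_i\longmapsto[\mathcal{L}_i],
\]
and then show (i) $\phi$ is surjective; (ii) $\phi(I_{\mathrm{uni}})=0$, so that $\phi$ descends to a surjection $\bar\phi\colon\mathbb{C}[Z_{\mathrm{uni}}]\twoheadrightarrow QK(Fl_n)$; and (iii) $\bar\phi$ is injective.

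Step (i) is the easy one. The classes $[\mathcal{L}_1],\dots,[\mathcal{L}_n]$ generate $K(Fl_n)$ as a $\mathbb{C}$-algebra: indeed $e_i([\mathcal{L}_\bullet])=\binom ni$ because $\wedge^i\mathbb{C}^n$ is $\mathcal{O}_{Fl_n}$-trivial of rank $\binom ni$ and admits a filtration with subquotients the tensor products $\mathcal{L}_{j_1}\otimes\cdots\otimes\mathcal{L}_{j_i}$, and these relations present $K(Fl_n)$. Since $QK(Fl_n)$ is a free $\mathbb{C}[Q]$-module deforming $K(Fl_n)$, a filtered Nakayama argument over $\mathbb{C}[Q]$ upgrades generation of the special fibre to surjectivity of $\phi$.

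Step (ii) is the heart of the matter and, I expect, the main obstacle. What must be shown is that the $K$-theory classes $[\mathcal{L}_i]$ satisfy $F_i([\mathcal{L}_\bullet],Q)=\binom ni$ in $QK(Fl_n)$; equivalently, that the characteristic polynomial of the relativistic Toda Lax matrix $L=AB^{-1}$ with $z_i$ specialized to $[\mathcal{L}_i]$ equals $(\zeta-1)^n$. The natural source of this identity is Givental--Lee's description of the quantum $K$-theory $D$-module of $Fl_n$: the $K$-theoretic $J$-function is a common eigenfunction of the commuting family of finite-difference ($q$-)Toda operators, so that $\operatorname{Spec}QK(Fl_n)$ is the spectral variety of that $q$-difference system; the content of (ii) is then the careful translation of this eigenvalue statement into a ring presentation, matching the difference-operator variables with the generators $[\mathcal{L}_i]$ under the appropriate Fourier/Mellin-type transform and reading off that the relativistic-Toda conserved quantities $F_i$ take the values $\binom ni$ forced by the unipotent boundary condition. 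An alternative, paralleling Ciocan-Fontanine's and Givental--Kim's proofs of the Givental--Kim presentation in quantum cohomology, would be to compute the relevant $K$-theoretic two-point Gromov--Witten (or quasimap) invariants directly on Quot-scheme compactifications of the space of degree-$d$ maps $\mathbb{P}^1\to Fl_n$ and verify the quantized characteristic-polynomial identity term by term. In $K$-theory this direct route is substantially heavier than in cohomology --- the virtual structure sheaf, the pairing given by the pushforward of $\mathcal{O}$, and the (a priori nonobvious) polynomiality of the quantum corrections all enter --- which is why I regard (ii) as the principal difficulty.

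Step (iii) is then essentially commutative algebra. The variety $Z_{\mathrm{uni}}$ is cut out of $\mathbb{A}^{2n-1}$ by the $n$ equations $F_i(z,Q)=\binom ni$, so it is a complete intersection once one knows it has pure dimension $n-1$; purity and reducedness, and the fact that the projection $(z,Q)\mapsto Q$ has exactly $n!$ reduced points in its generic fibre, can be extracted from the Lax-matrix realization $Z_\gamma\hookrightarrow SL_n(\mathbb{C})$ recalled in \S\ref{ssec:Lax} (the generic fibre being the solution set of $F_\bullet=\binom n\bullet$, of cardinality $|W|$; at $Q=0$ the fibre is $K(Fl_n)_{\mathbb{C}}$, of length $n!$, concentrated at $z=(1,\dots,1)$). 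Being a complete intersection, $\mathbb{C}[Z_{\mathrm{uni}}]$ is Cohen--Macaulay, and its finite morphism to the regular ring $\mathbb{C}[Q^{\pm1}]$ is therefore flat, so $\mathbb{C}[Z_{\mathrm{uni}}][Q^{-1}]$ is free of rank $n!$. Comparing ranks with $QK(Fl_n)$ and invoking surjectivity of $\bar\phi$ forces it to be an isomorphism; since it sends $z_i\mapsto[\mathcal{L}_i]$ by construction, and any such isomorphism is pinned down by this identification of generators, the isomorphism is canonical.
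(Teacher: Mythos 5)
What you are attempting to prove is stated in the paper \emph{as a conjecture}, not a theorem; the paper contains no proof of it, so there is no internal proof to compare against. The authors even add a remark that only after this article was submitted did Anderson--Chen--Tseng (\cite{ACT}) prove the finiteness of $QK(Fl_n)$ and the relations $F_i(z,Q)=\binom{n}{i}$, which is what establishes Conjecture \ref{conj:KM}. The entire contribution of the present paper is to take $\CC[Z_{\uni}]$ as a \emph{working definition} of $\mathcal{QK}(Fl_n)$ and build the Peterson-type isomorphism on top of that definition, not to derive the presentation from the Givental--Lee theory.

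Evaluated on its own terms, your sketch correctly locates the difficulty but does not close it. Step (ii) is, as you say, the heart of the matter: one must show that the classes $[\mathcal{L}_i]$ actually satisfy $F_i(z,Q)=\binom{n}{i}$ in the quantum $K$-ring. You gesture at two routes --- translating the $q$-difference Toda eigenvalue statement of Givental--Lee into a ring presentation, or a direct quasimap computation \`a la Ciocan-Fontanine/Givental--Kim --- but carry out neither; this is exactly the content of \cite{ACT}, which did not exist when the paper was written. There is also a gap hidden in Step (i): you take as given that $QK(Fl_n)$ is a finitely generated free $\CC[Q]$-module ``deforming $K(Fl_n)$'' so that Nakayama applies, but the paper explicitly flags that this polynomiality/finiteness is not known for $Fl_n$ in general and is part of what the conjecture silently assumes (the paper circumvents this by \emph{defining} $QK(Fl_n)$ as the $\CC[Q]$-span of the Schubert classes). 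So the Nakayama step cannot be invoked until finiteness is established, and finiteness is itself one of the things \cite{ACT} prove. Step (iii) (Cohen--Macaulayness of the complete intersection $Z_{\uni}$, miracle flatness over $\CC[Q^{\pm1}]$, rank count $n!$, hence injectivity of a surjection) is a reasonable closing argument conditional on (i)--(ii), though you should also verify properness of $Z_{\uni}\to\operatorname{Spec}\CC[Q^{\pm1}]$ before calling that morphism finite. In short: the outline is sensible and correctly identifies the load-bearing step, but it does not constitute a proof, and the paper itself does not claim one.
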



Givental and Lee \cite{givental2003quantum} studied 
a certain generating function of the $\mathbb{C}^* \times SL_n(\mathbb{C})$-equivariant Euler characteristic of a natural family of line bundles 
on the {\it quasimap spaces\/}  
from $\mathbb{P}^1$ to $Fl_n$. The function is a formal power series in $Q=(Q_1,\ldots,Q_{n-1})$ and its copy $Q'$, depending on $q$, the coordinate
of $\mathbb{C}^*$, and $(\Lambda_1,\ldots,\Lambda_n)$ with $\Lambda_1\cdots \Lambda_n=1$, the coordinates
of the maximal torus of $SL_n(\mathbb{C})$, respectively.
They proved that the generating function is the eigenfunction
of the finite $q$-difference Toda operator. {A relation to the Toda
lattice has been studied further by Braverman and Finkelberg \cite{BF1,BF2}.}
Accordingly, it has been expected that there is a 
finite $q$-difference counterpart of quantum $D$-module (\cite{GiventalHomGeom1995, GiventalEqGW1996}, see also \cite{KimAnnals1999}) giving the structure 
of $QK(Fl_n)$;
however, to the best of the authors' knowledge, the connection between the multiplication in $QK(Fl_n)$ and the $q$-difference system is still uncertain. 
We hope that recent work by Iritani, Milanov, and Tonita \cite{IritaniMilanovTonita} gives 
an explanation of this connection, and that ultimately Conjecture \ref{conj:KM} will be proved.  

We add some remarks on {a certain} finiteness property of the quantum $K$-theory. Note that the ring is originally defined as a $\mathbb{C}[[Q]]$-algebra (see \cite{giventalWDVV, Lee2001}). For 
cominuscule $G/P$, such as the Grassmannian $Gr_d(\mathbb{C}^n)$ of $d$-dimensional subspaces of $\mathbb{C}^n$, 
it was shown by Buch, Chaput, Mihalcea, and Perrin \cite{BCMP2013,BCMP2016} that multiplicative structure constants
for the (quantum) Schubert basis $[\mathcal{O}_{X_w}]$ are polynomial
in $Q_i$ (see also Buch and Mihalcea \cite{BuchMihalcea2011} for earlier results on the Grassmannian). 
Thus, for such a variety, the $\mathbb{C}[Q]$-span of 
the Schubert classes forms a subring.
  So far, it is not known whether or not the finiteness holds for $Fl_n$ in general. In the above 
 conjecture, $QK(Fl_n)$ should be 
interpreted as the $\mathbb{C}[Q]$-span of the Schubert classes.

With this conjecture in mind, in this paper, we will denote $\mathbb{C}[Z_{\mathrm{uni}}]$
by $\mathcal{QK}(Fl_n)$ in distinction to ${QK}(Fl_n)$.

\begin{remark}(added in proof) After this article was submitted, a preprint \cite{ACT}
by Anderson--Chen--Tseng appeared, in which they proved the finiteness of the torus equivariant $K$-ring of $Fl_n$
and the relations $F_i(z,Q)=e_i(\Lambda_1,\ldots,\Lambda_n)$. This implies that Conjecture \ref{conj:KM} is true.
\end{remark}

\subsection{Dual stable Grothendieck polynomials}
Let 
$\Lambda$ denote the complexified\footnote{It is possible to work over the integers; however, we use complex coefficients throughout the paper because we use many coordinate rings of complex algebraic varieties.}
 ring of symmetric functions (see \cite{macdonald1998symmetric}). If we denote by $h_i$ the $i$th {\it complete symmetric function\/},
$\Lambda$ is the polynomial ring $\mathbb{C}[h_1,h_2,\ldots].$ It has the so-called {\it Hall inner product\/} $\langle\cdot,\cdot\rangle: \Lambda\times\Lambda\rightarrow \mathbb{C}$ and a standard Hopf algebra structure.

For each partition $\lambda=(\lambda_1\geq \lambda_2\geq \cdots\geq \lambda_\ell)$ of length 
$\ell$, the {\it Schur functions\/}
$s_\lambda\in \Lambda$ are defined by 
\begin{gather}
s_\lambda=\det(h_{\lambda_i+j-i})_{1\leq i,j\leq \ell}.
\end{gather}
The {\it stable Grothendieck polynomial\/} $G_\lambda$ 
is given as the sum of {\it set-valued tableaux\/} of shape $\lambda$,
and they
form a basis of a completed ring $\hat{\Lambda}$ of symmetric functions (see Buch \cite{BuchLR} for details).
The 
{\it dual stable Grothendieck polynomials\/} $\{g_\lambda\}$ due to 
Lam and Pylyavskyy \cite{lamply2007} are defined 
by $\langle g_\lambda,G_\mu\rangle=\delta_{\lambda,\mu}.$
It was shown in \cite{lamply2007} that
$\{g_\lambda\}$ is identified with the $K$-homology Schubert basis of the
infinite Grassmannian (see \cite[\S 9.5]{lamply2007} for a more precise statement). 
Shimozono and Zabrocki \cite{ShimozonoZabrocki} proved a determinant formula for 
$g_\lambda$.
The following formula for 
$g_\lambda$
is also available:
\begin{equation}
g_\lambda=\det
\left(
\textstyle 
\sum_{m=0}^\infty(-1)^m{1-i\choose m}h_{\lambda_i+j-i-m}
\right)_{1\leq i,j\leq \ell}=s_\lambda+\mbox{lower degree terms},\label{detg}
\end{equation}
where $h_k=0$ for $k<0$, and
${1-i\choose m}=(1-i)\cdots(1-i-m+1)/m!.$
\begin{remark}
It is straightforward to see the equivalence of formula (\ref{detg}) and the one in \cite{ShimozonoZabrocki} (see also \cite[Equation (4)]{LascouxNaruse2014}), so we omit the details. 
\end{remark}
\subsection{Lam--Schilling--Shimozono's presentation for $K_*(Gr_{SL_n})$}
In \cite{LSS}, Lam, Schilling and Shimozono
showed that the $K$-homology of the affine Grassmannian $K_*(Gr_{SL_n})$ \cite[\S 5.1]{LSS}
can be realized as a subring in the affine $K$-theoretic nil-Hecke algebra of Kostant--Kumar \cite{KostantKumar1990K-theory}. 
Let us denote by 
$\Lambda_{(n)}:=\mathbb{C}[h_1,\ldots,h_{n-1}]$ the subring of $\Lambda$ generated by $h_1,\ldots,h_{n-1}.$
In \cite{LSS}, the following ring isomorphism was established:
\begin{equation}
K_*(Gr_{SL_n})\simeq \Lambda_{(n)}.\label{eq:LSSisom}
\end{equation}
Note that 
$K_*(Gr_{SL_n})$ is equipped with a Hopf algebra structure 
coming from the based loop space $\Omega SU(n),$ and 
the above is a Hopf isomorphism
with the Hopf algebra structure on $\Lambda_{(n)}$ induced from the canonical one   
on $\Lambda.$

\subsection{$K$-theoretic Peterson isomorphism}
Let $Z_\uni^\circ$ be the Zariski open set of $Z_\uni$ 
defined as the complement of the divisor defined by $Q_1\cdots Q_{n-1}= 0.$
Thus the coordinate ring $\CC[Z_\uni^\circ]$ is 
a localization of $\CC[Z_\uni]=\CC[z,Q]/I_\uni$ by $Q_i\;(1\leq i\leq n-1)$.
In view of Conjecture \ref{conj:KM}, we define
\begin{equation}
\mathcal{QK}(Fl_n)_{\mathrm{loc}}:=\CC[Z_\uni^\circ].
\end{equation}

For the affine Grassmannian side, we define 
\begin{equation}
K_*(Gr_{SL_n})_{\mathrm{loc}}
:=\Lambda_{(n)}[\sigma_i^{-1},\tau_i^{-1}\;(1\leq i\leq n-1)],
\end{equation}
where 
\begin{equation}
\tau_i=g_{R_i},\quad \sigma_i=\sum_{\mu\subset R_i}g_{\mu}
\quad(1\leq i\leq n-1),\label{eq:deftau}
\end{equation}
where $R_i$ is the rectangle\footnote{Our notation of $R_i$ is conjugate (transpose) to 
the one used in \cite{lamshimo2010toda}.} with $i$ rows and $(n-i)$ columns. 
We set $\tau_0=\sigma_0=\tau_n=\sigma_n=1.$

\begin{example} Let $n=3.$ Then we have
\begin{equation*}
\tau_1=h_2,\quad
\tau_2=h_1^2-h_2+h_1,\quad
\sigma_1=h_2+h_1+1,\quad
\sigma_2=h_1^2-h_2+2h_1+1.
\end{equation*}
\end{example}
The main result of this paper is the following theorem. 

\begin{theorem}[Corollary \ref{cor:KPisom}, Propositions \ref{prop:zQST}, \ref{lemma:kappa}, \ref{prop:kappasigma}]\label{thm:main-intro}
There is an isomorphism of rings
\begin{equation}\label{eq:K-perterson}
\Phi_n:
\mathcal{QK}(Fl_n)_{\mathrm{loc}}\overset{\sim}{\longrightarrow}
K_*(Gr_{SL_n})_{\mathrm{loc}},
\end{equation}
given by 
\begin{equation}
z_i\mapsto \frac{\tau_i\sigma_{i-1}}{\sigma_i\tau_{i-1}}\quad(1\leq i\leq n),
\quad
Q_i\mapsto \frac{\tau_{i-1}\tau_{i+1}}{\tau_i^2}\quad(1\leq i\leq n-1).\label{eq:zQ}
\end{equation}
\end{theorem}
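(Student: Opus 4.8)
The plan is to verify the isomorphism by checking three things in sequence: (1) that the map $\Phi_n$ is well-defined, i.e.\ that the images of the $z_i$ and $Q_i$ satisfy the defining relations $F_j(z,Q)=\binom{n}{j}$ of $\mathbb{C}[Z_\uni^\circ]$; (2) that it lands in the localized ring $K_*(Gr_{SL_n})_{\mathrm{loc}}$ and is surjective; and (3) that it is injective. First I would take the structural input seriously: $\mathbb{C}[Z_\uni]$ is a complete intersection of Krull dimension $n-1$ (the $2n-1$ variables $z,Q$ cut by $n$ equations), and $\Lambda_{(n)}=\mathbb{C}[h_1,\dots,h_{n-1}]$ is a polynomial ring in $n-1$ variables; so once $\Phi_n$ is shown to be a well-defined \emph{dominant} morphism between irreducible varieties of the same dimension, injectivity of the ring map (= dominance) plus a matching of function fields will force birationality, and surjectivity after localization will be the content made precise in Propositions~\ref{prop:zQST}, \ref{lemma:kappa}, \ref{prop:kappasigma}.

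The technical heart, and the step I expect to be the main obstacle, is item (1): proving the polynomial identities
\[
\sum_{\substack{I\subset\{1,\dots,n\}\\ \#I=j}}\ \prod_{i\in I}\frac{\tau_i\sigma_{i-1}}{\sigma_i\tau_{i-1}}\ \prod_{\substack{i\in I\\ i+1\notin I}}\Bigl(1-\frac{\tau_{i}\tau_{i+2}}{\tau_{i+1}^2}\Bigr)\ =\ \binom{n}{j}\qquad(1\le j\le n)
\]
in the ring $\Lambda_{(n)}$, where the $\tau_i=g_{R_i}$, $\sigma_i=\sum_{\mu\subset R_i}g_\mu$ are the specific dual stable Grothendieck combinations of \eqref{eq:deftau}. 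My approach here is to recognize the left-hand side, after clearing the common denominator $\sigma_1\cdots\sigma_{n-1}\cdot\tau_1\cdots\tau_{n-1}$, as a determinant: the characteristic polynomial $\Psi_L(\zeta)=\det(\zeta-AB^{-1})$ for the bidiagonal Lax pair in \eqref{eq:L} factors through $\det(\zeta B - A)$, and with the substitution \eqref{eq:zQ} the matrix $\zeta B - A$ becomes tridiagonal with entries built from ratios of consecutive $\tau$'s and $\sigma$'s. The identity to prove is then that this tridiagonal determinant equals $(\zeta-1)^n$ times the scalar $\prod\sigma_i\prod\tau_i$ divided out. I would expand the tridiagonal determinant by the standard continuant recursion and show, by induction on $n$, that the $k$-th principal minor telescopes to $(\zeta-1)^k\,\sigma_k/\sigma_0\cdot(\text{correction})$; the needed combinatorial identities among $\tau_i,\sigma_i$ — essentially $\sigma_i-\sigma_{i-1}=\tau_i$-type relations together with a Plücker-like quadratic relation $\sigma_{i-1}\tau_{i+1}-\sigma_i\tau_{i+1}+\dots$ — should follow from the Pieri/coproduct structure of the $g_\lambda$ (their characterization $\langle g_\lambda,G_\mu\rangle=\delta_{\lambda\mu}$ and the rectangular shapes $R_i$), and this is exactly where the unipotent condition $\gamma_i=\binom ni$ must enter and where I expect to spend most of the effort.

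Granting (1), the remaining steps are comparatively soft. For (2): the images of $z_i$ and $Q_i$ are manifestly invertible in $K_*(Gr_{SL_n})_{\mathrm{loc}}$ since they are ratios of the inverted elements $\sigma_i,\tau_i$, so $\Phi_n$ is a well-defined ring homomorphism into the localization; surjectivity amounts to expressing each $h_k$ (equivalently each $\tau_i$ and $\sigma_i$, hence the inverted generators) as a rational function of the $z_i,Q_i$, which I would extract by inverting the telescoping relations from step (1) — that is the assertion packaged in Propositions~\ref{lemma:kappa} and \ref{prop:kappasigma}. For (3): since $\mathbb{C}[Z_\uni^\circ]$ is a domain (being a localization of the complete-intersection coordinate ring, which one checks is reduced and irreducible — the Lax-matrix description in \S\ref{ssec:Lax} identifies $Z_\uni$ with a locally closed subvariety of $SL_n$, pinning down its geometry), injectivity follows once $\Phi_n$ is dominant, and dominance follows from surjectivity after localization. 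Thus the only genuinely new work is the determinantal identity in step (1); the rest is assembling the cited Propositions into the clean statement of Theorem~\ref{thm:main-intro}.
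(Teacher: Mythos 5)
Your plan takes a genuinely different route from the paper's, and the route has a serious gap precisely at the step you correctly flag as the crux.

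The paper does \emph{not} verify the defining relations $F_j(\Phi_n(z),\Phi_n(Q))=\binom{n}{j}$ by direct computation in $\Lambda_{(n)}$. Instead it constructs the map geometrically: it shows that every normalized $\varphi\in Y^\circ_\gamma$ factors (Proposition~\ref{prop:RU}) as $\varphi(C_\gamma)=U^{-1}R$ with $U\in\pmb N_{\!-}\varepsilon$, $R\in\pmb B\sigma$, and defines $\beta([\varphi])=UC_\gamma U^{-1}=RC_\gamma R^{-1}$. This $L$ is conjugate to the companion matrix $C_\gamma$, so it automatically has characteristic polynomial $f_\gamma(\zeta)$; the isospectral property --- which is exactly the content of your identity $F_j=\binom{n}{j}$ --- is obtained for free from the construction. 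The work then shifts to verifying the conditions $(\mathrm Z_1)$--$(\mathrm Z_4)$ and $(\mathrm Y_0)$--$(\mathrm Y_3)$ match up (Lemma~\ref{lem:Y3Z3}), and to identifying the minor-determinants $T_i,S_i$ with $\tau_i,\sigma_i$ via a lattice-path argument (Propositions~\ref{lemma:kappa}, \ref{prop:kappasigma}). Your approach replaces this geometric insight with a purely combinatorial identity in $\Lambda_{(n)}$, and what you propose for proving that identity is not a proof but a hope: the ``continuant/telescoping'' recursion is not carried out, the Pl\"ucker-type bilinear relation is written with a ``$\dots$'', and the one concrete relation you do name, ``$\sigma_i-\sigma_{i-1}=\tau_i$-type'', is in fact false (for $n=3$ one has $\sigma_1-\sigma_0=h_2+h_1\neq h_2=\tau_1$). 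The needed bilinear identities among the $g_\lambda$'s are essentially Hirota equations for the relativistic Toda lattice in symmetric-function dress; proving them combinatorially from the Pieri/coproduct structure of $g_\lambda$ is a hard standalone project, and it is precisely the problem that the paper's companion-matrix construction is designed to avoid.

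Two smaller gaps in your soft steps (2)--(3). For surjectivity you say one should ``invert the telescoping relations'' to express $h_k$ in the $z_i,Q_i$; this is exactly the content of the map $\alpha$ in the paper (Definition~\ref{def:alpha} and Proposition~\ref{prop:RUexist}), and it is nontrivial --- the paper derives $\Delta_{1,1}=\varphi$ from the conjugation identities, not from inverting formulas. For injectivity you appeal to $\mathbb{C}[Z^\circ_\uni]$ being a domain of the right dimension; irreducibility of $Z_\uni$ is not immediate from the complete-intersection description (a complete intersection can be reducible), and in the paper it is a \emph{consequence} of the isomorphism with the open set $Y^\circ_\uni\subset\mathbb P(\mathscr O_\uni)$ rather than an input to it, so using it as an input risks circularity. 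In summary, what the paper buys with the companion-matrix/Gauss-decomposition machinery is that isospectrality, well-definedness, and two-sided invertibility all come out of one explicit construction; your outline would still leave the hardest identity unproved.
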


\begin{remark}
Let $p_i$ be the $i$th power sum symmetric function in $\Lambda$. With the 
identification $t_i={p_i}/{i}\;(1\leq i\leq n-1)$, the formulas (\ref{eq:deftau}), (\ref{eq:zQ}) 
give an explicit solution of the relativistic Toda lattice that is a rational function in time variables $t_1,\ldots,t_{n-1}.$
Thus, if we substitute the rational expression of $(z,Q)$ in $t_j$ into $F_i(z,Q)$, then such a complicated rational function is equal to $\binom{n}{i}.$ This remarkable identity can be understood as a consequence of the isospectral property of the relativistic Toda lattice.
\end{remark}

\subsection{Method of construction}
The ring homomorphism $\Phi_n$ is obtained by solving
the relativistic Toda lattice. Our method of 
solving the system is analogous to the one employed by Kostant \cite{Kostant1979Toda} for the finite non-periodic Toda lattice.
Let $C_\gamma$ be the companion matrix of the (common) characteristic
polynomial of $L\in Z_\gamma.$
We denote the centralizer of $C_\gamma$ by $\mathfrak{X}_\gamma$, 
which is the affine space of dimension $n.$
We can construct a birational morphism $\alpha$ from $Z_\gamma$ to 
$\mathbb{P}(\mathfrak{X}_\gamma).$
On the open set of $\mathbb{P}(\mathfrak{X}_\gamma)$ defined as the complement of
the divisors given by $\tau_i$, $\sigma_i$, we can construct
the inverse morphism $\beta$ to $\alpha.$ When $\gamma$ is unipotent, the associated 
isomorphism between the coordinate rings of these open sets is nothing but $\Phi_n.$
%


\subsection{Quantum Grothendieck polynomials}
The {\it quantum Grothendieck polynomials\/} $\mathfrak{G}_w^Q$ \cite{lenart2006quantum} 
of
Lenart and Maeno
are a family of polynomials in the variables
$x_1,\ldots,x_n$ and quantum parameters $Q_1,\ldots,Q_{n-1}$
indexed by permutations $w\in S_n$ (see \S \ref{ssec:GroP} for the definition).
Note that $x_i$ is identified with $1-z_i$ (the first Chern class of the dual line bundle of $\mathcal{L}_i$).  
It was conjectured in \cite{lenart2006quantum} that the polynomial $\mathfrak{G}_w^Q$ represents the quantum
Schubert class $[\mathcal{O}_{X_w}]$ in $\mathbb{C}[z,Q]/I_\uni$ 
under the conjectured isomorphism\footnote{The main result of \cite{lenart2006quantum} is a
Monk-type formula for $\mathfrak{G}_w^Q$.
It is worthwhile to emphasize that the formula 
is proved logically independent from Conjecture \ref{conj:KM}. In fact, the formula holds in the polynomial ring of $x_i$ and $Q_i$.}
 (\ref{eq:LM}).
 
 \subsection{$K$-theoretic $k$-Schur functions}
{
In the sequel we use the notation $k = n-1$.
Let $\mathcal{B}_k$ denote the set of $k$-{\it bounded partitions} $\lambda$, i.e. the partitions such that $\lambda_1\leq k.$  
The $K$-{\it theoretic $k$-Schur functions\/}
$\{g_\lambda^{(k)}\}\subset  \Lambda_{(n)}\simeq K_*(Gr_{SL_n})$ are indexed by $\lambda\in \mathcal{B}_k$ and defined in \cite{LSS} as the dual basis of the Schubert basis of the Grothendieck ring $K^*(Gr_{SL_n})$
of the thick version of the affine Grassmannian (see \cite{Kashiwarathick} for Kashiwara's construction of the thick flag variety of a Kac--Moody Lie algebra). 
{
The highest degree component of $g_\lambda^{(k)}$ is the $k$-{\it Schur function\/}  $s_\lambda^{(k)}$
introduced by Lapointe, Lascoux, and Morse \cite{LLMkSchur}  (see also \cite{kSchurBook} and references therein).}

 We note that $g_\lambda^{(k)}$ is equal to $g_\lambda$ if $k$ is sufficiently large \cite[Remark 7.3]{LSS}. }
Morse \cite{Morse2012}\footnote{The conjecture after Property 47 in \cite{Morse2012}.}  conjectured that for a partition $\lambda$ such that $\lambda\subset R_i$ for some 
$1\leq i\leq k$, we have 
\begin{equation}
g_{\lambda}^{(k)}=g_\lambda,\label{recrec}
\end{equation} thus 
in particular, 
$
g_{R_i}^{(k)}=g_{R_i}=\tau_i. 
$ 
This conjecture is relevant for our considerations. 
Note that the counterpart of this conjecture for $k$-Schur functions was proved in \cite{LapointeMorse2007}; that is,
$s_{\lambda}^{(k)}=s_\lambda$ if $\lambda\subset R_i$ for some $1\leq i\leq k.$
In particular, we have $s_{R_i}^{(k)}=s_{R_i}$.
It is worth remarking $s_{R_i}$ arise as the ``$\tau$-functions'' of the finite 
non-periodic Toda lattice with nilpotent initial condition.

 \subsection{Image of the quantum Grothendieck polynomials}
We are interested in the image of $\mathfrak{G}_w^Q$ by $\Phi_n.$

Let 
$\mathrm{Des}(w)= \{i\;|\;1\leq i\leq n-1,\, \;w(i)>w(i+1)\}$ denote the set of descents of $w\in S_n$.
A permutation $w\in S_n$ is $d$-{\it Grassmannian\/} if $\mathrm{Des}(w)=\{d\}$; such elements are
in bijection with partition $\lambda=(\lambda_1,\ldots,\lambda_d)$ such that $\lambda\subset R_d$
. If $\lambda\subset R_d$, then the corresponding 
$d$-Grassmannian permutation $w_{\lambda,d}$ (see Remark \ref{rem:GrassPerm}) has length 
$|\lambda|=\sum_{i=1}^d\lambda_i.$
Let $\lambda^\vee:=(n-d-\lambda_d,\ldots,n-d-\lambda_1).$ 
\begin{theorem} [cf. Corollary \ref{cor:Gr}]\label{thm:Grass} Let $\lambda$ be a permutation such that $\lambda\subset R_d.$ 
Then we have
\begin{equation}
\Phi_n(\mathfrak{G}_{w_{\lambda,d}}^Q)=\frac{g_{\lambda^\vee}}{\tau_d}.\label{PhiG}
\end{equation}
\end{theorem}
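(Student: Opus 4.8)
The plan is to reduce the statement about the quantum Grothendieck polynomial $\mathfrak{G}^Q_{w_{\lambda,d}}$ to a statement about honest (non-quantum) Grothendieck polynomials, exploiting the fact that $w_{\lambda,d}$ is Grassmannian. First I would recall the definition of $\mathfrak{G}^Q_w$ from Lenart--Maeno: for a $d$-Grassmannian permutation, the combinatorial expansion should specialize so that $\mathfrak{G}^Q_{w_{\lambda,d}}$ is built only from the variables $x_1,\dots,x_d$ (the ``descent block'') together with the quantum parameters $Q_1,\dots,Q_{d-1}$ internal to that block, and one should check that it equals a \emph{quantum Jacobi--Trudi-type determinant} in the quantum complete homogeneous functions, i.e.\ the $K$-theoretic analogue of the formula expressing a quantum Schubert polynomial of a Grassmannian permutation as a quantum Schur polynomial. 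This is the content I would extract as a lemma: $\mathfrak{G}^Q_{w_{\lambda,d}} = \det\bigl(\text{(quantum $h$)}_{\lambda_i+j-i}\bigr)_{1\le i,j\le d}$ plus the appropriate set-valued/Grothendieck correction terms coming from \eqref{detg}. Since \cite{lenart2006quantum} proves the Monk formula for $\mathfrak{G}^Q_w$ purely in $\mathbb{C}[x,Q]$, this lemma is available independently of Conjecture \ref{conj:KM}.

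Next I would apply $\Phi_n$ to this determinant. Under $\Phi_n$ we have $x_i = 1 - z_i \mapsto 1 - \tfrac{\tau_i\sigma_{i-1}}{\sigma_i\tau_{i-1}}$ and $Q_i \mapsto \tfrac{\tau_{i-1}\tau_{i+1}}{\tau_i^2}$; the key computation is to identify the image of the quantum complete homogeneous functions in the $x_i$, $Q_i$ (for indices $\le d$) with the generating function of the $g_\mu$'s packaged by $\sigma_i,\tau_i$. Concretely, I expect the quantum elementary/complete symmetric functions in $x_1,\dots,x_d$ to map, after clearing the denominators $\tau_d, \sigma_d$, to expressions in the $h_j$ whose determinant collapses—via the Jacobi--Trudi formula \eqref{detg} for $g_{\lambda^\vee}$ and the duality $\mu\subset R_d \leftrightarrow \mu^\vee$—to $g_{\lambda^\vee}/\tau_d$. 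The ``$\vee$'' (complementation inside the rectangle $R_d$) enters exactly because the natural object attached to the flag-variety side indexed by $\lambda$ corresponds, after the change of variables through the Lax matrix, to the $K$-homology class indexed by the complementary partition; this is the $K$-theoretic shadow of the level-rank / rectangle duality already visible in the cohomological Peterson isomorphism of Lam--Shimozono.

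The cleanest route to organize the computation is to use the birational maps $\alpha,\beta$ between $Z_\uni^\circ$ and $\mathbb{P}(\mathfrak{X}_\uni)$ described in \S\,\ref{ssec:Lax}: on $\mathbb{P}(\mathfrak{X}_\uni)$ the coordinates $\tau_i,\sigma_i$ are polynomial in the $g_\mu$, and the $d$-Grassmannian quantum Grothendieck polynomial, which ``lives in the first $d$ slots,'' should correspond under $\beta^*$ to a minor of the companion-type matrix whose entries are exactly the truncated generating series $\sum_{\mu\subset R_d} g_\mu t^{|\mu|}$-type data. I would then invoke Morse's identity \eqref{recrec} in the form $g_{R_d}=g_{R_d}^{(k)}=\tau_d$ only to \emph{recognize} the denominator; the numerator identification is a determinant manipulation using \eqref{detg}. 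The main obstacle I anticipate is the first step: pinning down precisely why $\mathfrak{G}^Q_{w_{\lambda,d}}$, \emph{as defined by Lenart--Maeno via their recursion/Monk formula}, equals the quantum-deformed Jacobi--Trudi determinant—i.e.\ proving the $K$-theoretic, quantum analogue of the classical fact ``Grassmannian Schubert = Schur.'' Once that structural identity is in hand, applying $\Phi_n$ and matching with \eqref{detg} is a finite determinantal check; everything else (the role of $\lambda^\vee$, the factor $1/\tau_d$) follows from bookkeeping with the rectangle $R_d$ and the explicit formulas \eqref{eq:zQ}.
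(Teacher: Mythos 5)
Your proposal identifies the right target — a determinantal expression that collapses to $g_{\lambda^\vee}/\tau_d$ — but it leans on a lemma you yourself flag as the main obstacle, namely a quantum Jacobi–Trudi formula for $\mathfrak{G}^Q_{w_{\lambda,d}}$, and the paper in fact \emph{avoids} this lemma rather than proving it. The paper's route is: (a) invoke Buch's classical identity $\mathfrak{G}_{w_{\lambda,d}}=G_\lambda(x_1,\ldots,x_d)$ (no quantum deformation involved); (b) expand $G_\lambda(x_1,\ldots,x_d)$ in the basis $s_\mu(1-x_1,\ldots,1-x_d)$, and prove a quantum Jacobi--Trudi formula \emph{only for Schur polynomials}, $\widehat Q(s_\mu(1-x_1,\ldots,1-x_d))=S^Q_{\mu,d}:=\det(F^{(d+j-1)}_{\mu'_i-i+j})$, which is immediate because each term of this determinant already sits in the distinguished basis (\ref{eq:Fbasis}) on which $\widehat Q$ is defined termwise; (c) show $\Phi_n(S^Q_{\mu,d})=\kappa_d(s_\mu)^\perp\cdot g_{R_d}/g_{R_d}$, where $\kappa_d$ is the involution $x_i\mapsto 1-x_i$ ($i\le d$) — this is Proposition \ref{prop:PhiF} and is the technical heart, proved via Gauss decomposition of $\varphi(C_\uni)=U^{-1}R$, Noumi's minor-determinant lemma, and the Boson--Fermion correspondence; and (d) package (a)--(c) as the commutative diagram of Theorem \ref{thm:quantize}, so that $\Phi_n(\mathfrak{G}^Q_{w_{\lambda,d}})=G_\lambda^\perp\cdot g_{R_d}/g_{R_d}$. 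Finally the appearance of $\lambda^\vee$ is not ``rectangle-duality bookkeeping'': it is the $K$-theoretic Littlewood--Richardson computation $c^{R_d}_{\lambda,\mu}=\delta_{\lambda^\vee,\mu}$ (Proposition \ref{prop:LR}), which dualizes to $G_\lambda^\perp\cdot g_{R_d}=g_{\lambda^\vee}$.

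Concretely, the gaps in your plan are these. First, there is no reason given (and the paper gives none) that $\mathfrak{G}^Q_{w_{\lambda,d}}$ is itself a quantum Jacobi--Trudi determinant; the correct move is to apply $\widehat Q$ to the \emph{undeformed} $G_\lambda(x_1,\dots,x_d)$ and let the Schur expansion do the work, precisely because $\widehat Q$ is only a linear map defined on the monomial basis $f^{(1)}_{i_1}\cdots f^{(n-1)}_{i_{n-1}}$, not a ring map. Second, your ``finite determinantal check'' after applying $\Phi_n$ hides the real content: computing $\Phi_n(S^Q_{\lambda,d})$ requires identifying the entries $F^{(m)}_i$ with entries of the matrix $U$ from the Gauss factorization (Lemma \ref{lemma:RtoF}) and then using the Lindstr\"om--Gessel--Viennot/Boson--Fermion machinery (Propositions \ref{prop:determinant2}, \ref{prop:numer}) to recognize the answer as $D(d-i_1,\dots,d-i_d)/D(d-1,\dots,0)$; this is not a routine clearing-of-denominators in $\tau_i,\sigma_i$. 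Third, you invoke Morse's conjecture (\ref{recrec}) to justify $\tau_d=g_{R_d}$, but that identity is \emph{proved} unconditionally in Section \ref{sec:tau} (Proposition \ref{lemma:kappa}); relying on a conjecture here would undercut the theorem. Finally, the source of $\lambda^\vee$ needs Proposition \ref{prop:LR}; without it the proposal gives no actual mechanism producing the complementary partition.
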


For a general permutation $w$, we will provide a conjecture on the image of $\mathfrak{G}_w^Q$.
Let $\lambda: S_n\rightarrow \mathcal{B}_k$ be 
a map defined by Lam and Shimozono \cite[\S 6]{lamshimo2010toda} (see \S \ref{ssec:lambda} below).
In order to state the conjecture we also need an involution $\omega_k$ on $\mathcal{B}_k$,
$\mu\mapsto \mu^{\omega_k}$, 
called the $k$-{\it conjugate\/} (\cite{LapointeMorse2005JCTA}).
The image of the map $\lambda$ consists of elements in $\mathcal{B}_k$ that are
$k$-{\it irreducible}, that is, those $k$-bounded partition $\mu=(1^{m_1}2^{m_2}\cdots (n-1)^{m_{n-1}})$ such that $m_i\leq k-i\;(1\leq i\leq n-1)$.
Let $\mathcal{B}_k^*$ denote the set of all $k$-irreducible $k$-bounded partitions. 
Note that $\mathcal{B}_k^*$ is preserved by $k$-conjugate. 
Let us denote by $S_n^*$ the subset $\{w\in S_n\;|\;w(1)=1\}$ of $S_n$.
We know that $\lambda$ gives a bijection from 
$S_n^*$ to  $\mathcal{B}_k^*$ (see \S \ref{ssec:lambda} below).

\begin{conjecture}\label{tildeg}
Let $w$ be in $S_n$.
There is a polynomial $\tilde g_{w}
\in \Lambda_{(n)}$ such that 
\begin{equation}
\Phi_n(\mathfrak{G}_w^Q)=\frac{ \tilde g_{w}}{\prod_{i\in \mathrm{Des}(w)}\tau_i}.
\end{equation}
Forthermore, $\tilde g_{w}$ satisfies the following properties:

(i) If $\lambda(w)=\lambda(w')$ for $w,w'\in S_n$, then we have
\begin{equation*}
\tilde g_w=\tilde g_{w'}.
\end{equation*}

(ii) For $w\in S_n$, we have 
\begin{equation}
\tilde g_w=g^{(k)}_{\lambda(w)^{\omega_k}}+\sum_{\mu}
a_{w,\mu}g_{\mu}^{(k)},\quad a_{w,\mu}\in \ZZ,
\end{equation}
where $\mu$ runs for all elements in $\mathcal{B}_k^*$ such that $|\mu|<|\lambda(w)|.$

(iii) $(-1)^{|\mu|-|\lambda(w)|}a_{w,\mu}$ is a non-negative integer.
\end{conjecture}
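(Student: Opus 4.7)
The plan is to prove Conjecture~\ref{tildeg} by combining the Grassmannian case (Theorem~\ref{thm:Grass}) with the Monk-type formula for $\mathfrak{G}_w^Q$ from \cite{lenart2006quantum} and the ring isomorphism $\Phi_n$ of Theorem~\ref{thm:main-intro}. The Monk formula expresses the product $\mathfrak{G}_{s_i}^Q \cdot \mathfrak{G}_w^Q$ as a $\mathbb{Z}[Q]$-linear combination of other $\mathfrak{G}_{w'}^Q$, so by induction on $\ell(w)$ every quantum Grothendieck polynomial can be written recursively in terms of Grassmannian ones. Applying $\Phi_n$ converts this recursion into an equality in $K_*(Gr_{SL_n})_{\mathrm{loc}}$, and, after using the explicit substitutions $z_i = \tau_i\sigma_{i-1}/(\sigma_i\tau_{i-1})$ and $Q_i = \tau_{i-1}\tau_{i+1}/\tau_i^2$ from (\ref{eq:zQ}), becomes a rational expression in the $\tau_i$, $\sigma_i$, and $g_\mu$.

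To establish the denominator $\prod_{i\in \mathrm{Des}(w)} \tau_i$ and hence the existence of $\tilde g_w \in \Lambda_{(n)}$, I would again induct on $\ell(w)$. The base case is exactly Theorem~\ref{thm:Grass}, which handles any $d$-Grassmannian $w$ with denominator $\tau_d$. For the inductive step, one tracks how $\mathrm{Des}(w)$ transforms under multiplication by the divisor class $1-z_j$ appearing in the Monk formula, and verifies that the $\sigma_i$ and $z_i$-factors introduced either cancel or contribute exactly one $\tau_i$-factor for each descent of the target permutation. The identity $\sigma_i = \sum_{\mu\subset R_i} g_\mu$ from (\ref{eq:deftau}), together with the Hopf algebra structure on $\Lambda_{(n)}$, should then force the numerator of the reduced fraction to lie in $\Lambda_{(n)}$.

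Properties (i) and (ii) should follow from a finer bookkeeping in the same induction. For (i), the key fact is that the map $\lambda:S_n \to \mathcal{B}_k$ factors through the parabolic coset structure of the affine Weyl group used by Lam--Shimozono \cite{lamshimo2010toda}; two permutations $w,w'$ with $\lambda(w)=\lambda(w')$ differ by elements that become trivial under the localized Schubert correspondence implicit in $\Phi_n$, so one verifies that the Monk recursion descends to the quotient by the equivalence $w\sim w'$. For (ii), the leading term $g_{\lambda(w)^{\omega_k}}^{(k)}$ should appear because $k$-conjugation encodes the duality between the thick Schubert basis of $K^*(Gr_{SL_n})$ and its $K$-homological dual $\{g_\lambda^{(k)}\}$, while the lower-order correction terms arise from the degree filtration already visible in the determinant expansion (\ref{detg}). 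One checks (ii) on Grassmannian inputs using Morse's conjectural equality (\ref{recrec}) to identify $g_{R_i}^{(k)}=\tau_i$ and Theorem~\ref{thm:Grass} to identify the leading term as $g^{(k)}_{\lambda^\vee}$ where $\lambda^\vee = \lambda(w_{\lambda,d})^{\omega_k}$, then propagates the expansion through the Monk recursion while controlling lower-order terms by the filtration.

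The main obstacle will be the positivity assertion (iii). Unlike (i) and (ii), which are structural and yield to careful inductive bookkeeping, the sign pattern $(-1)^{|\mu|-|\lambda(w)|}a_{w,\mu}\geq 0$ is a genuinely nontrivial positivity phenomenon. A purely algebraic induction through the Monk formula does not obviously preserve signs, because both positive and negative contributions arise when rewriting products of divisor classes in the $g_\mu^{(k)}$ basis. The natural routes are either to interpret $a_{w,\mu}$ geometrically as a signed count of intersections on a thick completion of $Gr_{SL_n}$, or to discover a positive combinatorial rule expressing $\tilde g_w$ in terms of $g_\mu^{(k)}$ via a bijective algorithm on $k$-tableaux. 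Since even the signed positivity of the structure constants of the $g_\mu^{(k)}$ themselves is only conjectural in general, this step is the most likely to require new input beyond the framework of the present paper.
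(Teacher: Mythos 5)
The statement you are trying to prove is stated in the paper as a \emph{conjecture}, and the paper offers no proof of it: the authors only verify the Grassmannian case (Theorem \ref{thm:Grass} gives $\tilde g_{w_{\mu,d}}=g_{\mu^\vee}$, and even matching this with part (ii) requires Morse's still-open conjecture (\ref{recrec})), and otherwise support the statement with Sage computations. So there is no ``paper proof'' to compare against, and your proposal must be judged as a standalone argument --- which it is not. It is a strategy outline in which every step beyond the base case is asserted rather than established. The most serious gap is the existence claim itself: $\Phi_n$ a priori lands in $\Lambda_{(n)}[\tau_i^{-1},\sigma_i^{-1}]$, and the substitutions (\ref{eq:zQ}) introduce $\sigma_i$'s into every $z_i$-factor, so the assertion that ``the $\sigma_i$ and $z_i$-factors introduced either cancel or contribute exactly one $\tau_i$-factor for each descent'' is precisely the content of the conjecture restated, with no mechanism supplied for why all $\sigma_i$'s cancel and why the surviving $\tau_i$-powers match $\mathrm{Des}(w)$ exactly (note $\mathrm{Des}(ws_i)$ versus $\mathrm{Des}(w)$ can differ in either direction, and the Monk formula mixes permutations with different descent sets on the right-hand side). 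Invoking ``the Hopf algebra structure'' to force the numerator into $\Lambda_{(n)}$ is not an argument.

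The remaining parts have the same character. For (i), the claim that permutations with $\lambda(w)=\lambda(w')$ ``differ by elements that become trivial under the localized Schubert correspondence'' is not a proof; the $\lambda$-map identifies $w$ with $c_0^m w$ for the cyclic element $c_0$, and you would need to show that $\Phi_n(\mathfrak{G}^Q_{c_0 w})$ and $\Phi_n(\mathfrak{G}^Q_w)$ differ exactly by the ratio of the corresponding $\tau$-products, which is nowhere verified. For (ii), even the base case depends on the unproved identity $g_\lambda^{(k)}=g_\lambda$ for $\lambda\subset R_i$, and ``propagating the expansion through the Monk recursion while controlling lower-order terms'' is exactly the part that would constitute a proof and is omitted; the paper's own example $\tilde g_{1423}=g^{(3)}_{2,1,1}-g^{(3)}_{2,1}$ shows the answer is not a single $K$-$k$-Schur function, so genuine cancellation bookkeeping is unavoidable. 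For (iii) you correctly concede that your method cannot reach the positivity statement. In short: you have identified reasonable ingredients (Theorem \ref{thm:Grass}, the Monk formula, the explicit form of $\Phi_n$), but no step of the induction is actually carried out, and the statement remains a conjecture both in the paper and after your proposal.
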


The counterpart of Conjecture \ref{tildeg} in the (co)homology case was established in \cite{lamshimo2010toda}, where the quantum Schubert polynomial $\mathfrak{S}_w^q$ of Fomin, Gelfand, and Postnikov \cite{FominGelfandPostnikov1997}
is sent by the original Peterson isomorphism to the fraction, whose numerator is the single $k$-{Schur function} associated with 
$\lambda(w)^{\omega_k}$, and 
the denominator is the products of $s_{R_i}^{(k)}$ such that $i\in \mathrm{Des}(w).$
Note in the formula in \cite{lamshimo2010toda}, the numerator is the $k$-Schur function associated with ${\lambda(w)}$ without $k$-conjugate by reason of the convention.


If $w\in S_n$ is $d$-Grassmannian for some $d$, and $w=w_{\mu,d}$ with $\mu
\subset R_d$, 
then from Theorem \ref{thm:Grass} 
we have
\begin{equation*}
\tilde g_w=g_{\mu^\vee}.
\end{equation*}
Since we know that $\mu^\vee=\lambda(w_{\mu,d})^{\omega_k}$ (Lemma \ref{lem:kconj} below), if (\ref{recrec}) is true, 
Conjecture \ref{tildeg} holds for all Grassmannian permutations $w$.
In the early stage of this work, we expected that $\tilde g_w$ is always a single $K$-$k$-Schur function, however, this is not the case; for example
we have 
\begin{equation*}
\tilde g_{1423}=g_{2,1,1}^{(3)}-g_{2,1}^{(3)}.
\end{equation*}

\subsection{Further discussions}
Let us assume that 
Conjecture \ref{tildeg} is true and discuss its possible implications. The property 
(iii) says $\{\tilde g_w\}_{w\in S_n^*}$ and $\{g_{\nu}^{(k)}\}_{\mu\in \mathcal{B}_k^*}$ are two bases of the same space, and the transition matrix between the bases  
is lower unitriangular. 
This suggests that $\{\tilde g_w\}_{w\in S_n^*}$ is a part of an important basis
of $\Lambda_{(n)}$ different from the $K$-$k$-Schur basis.
One possibility of such basis will be the following: 
Let us
denote the function $\tilde g_w$ by 
$\tilde g_{\nu}^{(k)}$ with $\nu=\lambda(w)^{\omega_k}\in\mathcal{B}_k^*$.
For a general $k$-bounded partition $\mu$, we can uniquely write it as
$
\mu=\nu\cup \bigcup_{i=1}^{n-1}R_i^{e_i},\;
\nu\in \mathcal{B}_k^*,\; e_i\geq 0.
$
Then we define
\begin{equation*}
\tilde g_{\mu}^{(k)}=\tilde g_{\nu}^{(k)}\cdot \tau_1^{e_1}\cdots
\tau_{n-1}^{e_{n-1}}.
\end{equation*}
One sees that $\tilde g_{\mu}^{(k)}\;( \mu\in \mathcal{B}_k)$ form a
basis of $\Lambda_{(n)}.$


%


%
%

In the proof of the isomorphism in \cite{Lam2010}, they work in $T$-equivariant ($T$ is the maximal torus of $G$) settings, and first give the module isomorphism $\psi$ from $H_*^T(Gr_G)_{\mathrm{loc}}$ to
$QH^*_T(G/B)_{\mathrm{loc}}$, and  
next prove the $\psi$-preimage of the quantum Chevalley formula. Since the quantum Chevalley formula uniquely characterizes $QH^*_T(G/B)$ due to
 a result of Mihalcea \cite{MihalceaDuke2007}, we know that $\psi$ is a ring isomorphism. 
 In our situation, we proved the ring isomorphism $\Phi_n$ (Theorem \ref{cor:KPisom}) without using 
the quantum Monk formula of $\mathcal{QK}(Fl_n)$ (cf. Lenart-Postnikov \cite{LP}). 
Thus the basis $\{\tilde g_\nu^{(k)}\}_{\nu\in \mathcal{B}_k}$ should satisfy the corresponding formula in $\Lambda_{(n)}.$ 
These issues will be studied further elsewhere.



\bigskip
\subsection{Organization.} In Sections 2--4 of this paper, we give the $K$-theoretic Peterson morphism and prove it is an isomorphism (Theorem \ref{cor:KPisom}). In Sections 5--6,
we calculate the image of quantum Grothendieck polynomials
associated with Grassmannian permutations (Theorem \ref{thm:Grass}).
In Section 7, we discuss some details of Conjecture \ref{tildeg}.

In Section \ref{sec:KP}, we state the main results of the first main part of the paper. 
We construct a birational 
morphism $\alpha$ from $Z_\gamma$ to $\mathbb{P}(\mathscr{O}_\gamma)$ with 
$\mathscr{O}_\gamma:=\mathbb{C}[\zeta]/(\zeta^n+\sum_{i=1}^{n-1}
(-1)^i\gamma_i\zeta^{n-i}).$
We also describe the open sets
of both $Z_\gamma$ and $\mathbb{P}(\mathscr{O}_\gamma)$
that are isomorphic as affine algebraic varieties. 
The complement of the open part of $\mathbb{P}(\mathscr{O}_\gamma)$
is a divisor given by explicitly defined functions $T_i,S_i$.
The main construction of Section 2 is the definition of the map $\Phi_n$ in Theorem \ref{cor:KPisom}. 
In fact, the isomorphism statement of Theorem \ref{cor:KPisom}
is given as Corollary \ref{cor:KPisom}, which is the unipotent case of 
Theorem \ref{thm:isom}. In Section 3, we 
 prove Theorem \ref{thm:isom}. We also give a more conceptual description of the map
$\alpha$ and its inverse $\beta$. 
Section \ref{ssec:Phi} includes the formula of $\Phi_n$ in terms of the functions $T_i,S_i$.
In Section 4 we determine the precise form of the $\tau$-functions $T_i, S_i$ in terms of dual stable Grothendieck polynomials, thus completing the proof of Theorem \ref{cor:KPisom}.

In Section 5, we summarize the second main result. The aim is to calculate the quantum Grothendieck polynomials
associated with Grassmannian permutations.
We give a version  $\widehat Q_d$ of the quantization map from the $K$-ring 
of the Grassmannian $Gr_d(\mathbb{C}^n)$ 
to $\mathcal{QK}(Fl_n)$ by using our $K$-Peterson isomorphism.
The main statement of Section 5 is the compatibility of $\widehat Q_d$ and Lenart--Maeno's 
quantization map $\widehat Q$ 
with respect to the embedding 
$K(Gr_d(\mathbb{C}^n))\hookrightarrow K(Fl_n)$ 
(Theorem \ref{thm:quantize}). As a corollary to this, we obtain Theorem \ref{thm:Grass}. 
Section 6 is devoted to the proof of Theorem \ref{thm:quantize}.
In Section 7, we explain some details of Conjecture 2 and give some examples of calculations.

\setcounter{equation}{0}
\section{Construction of $K$-theoretic Peterson Isomorphism}\label{sec:KP}
In this section, we state our main construction.

Let $\gamma_i\;(1\leq i\leq n-1)$ be any complex numbers and set $\gamma_n=1.$
Let 
\begin{equation*}
f_\gamma(\zeta)=\zeta^n+\sum_{i=1}^{n}(-1)^i\gamma_i \zeta^{n-i}.
\end{equation*}
Let 
$
\mathscr{O}_\gamma$ denote the quotient ring 
$\mathbb{C}[\zeta]/(f_\gamma(\zeta)).
$
We also consider $\mathscr{O}_\gamma$ as 
an affine space. 
We use the following notation of minor determinants for an $n\times n$ matrix
$X=(x_{ij})_{1\leq i,j\leq n}$:
\begin{equation}
\xi_{i_1,\ldots,i_r}^{j_1,\ldots,j_r}(X)=\det(x_{i_a,j_b})_{1\leq a,b\leq r}.
\end{equation}
Let $\Delta_{i,j} = \Delta_{i,j}(z,Q)
= \xi^{1,2,\dotsc,\hat{j},\dotsc,n}_{1,2,\dotsc,\hat{i},\dotsc,n}(\zeta B(z,Q) - A(z)).$

\begin{definition}\label{def:alpha} We define the map 
$\alpha: Z_\gamma\rightarrow \mathbb{P}(\mathscr{O}_\gamma)$
sending 
$L\in Z_\gamma$
to 
$[\Delta_{1,1}]$.
\end{definition}

\begin{example}If $n=3$ then $\Delta_{1,1}$ is given as
\begin{equation*}
\Delta_{1,1}=\zeta^2+(Q_2z_2-z_2-z_3)\zeta+z_2z_3.
\end{equation*}
\end{example}
This is a handy definition of $\alpha$.  
A more conceptual description of the map $\alpha$ is 
given in the next section, where $\mathscr{O}_\gamma$ is 
interpreted as the centralizer of the companion matrix
$C_\gamma$ of the characteristic polynomial $f_\gamma.$
In fact, we will construct the inverse $\beta$ of $\alpha$ defined
on an open set of $\mathbb{P}(\mathscr{O}_\gamma)$,
which is the counterpart of the map Kostant \cite{Kostant1979Toda} defined for the ordinary finite Toda lattice. 

The case of our interest, as was noted above, is 
$\gamma_i=\binom{n}{i}$
that is equivalent to $\Psi_L(\zeta)=(\zeta-1)^n.$ We call this parameter 
{\it unipotent}
and denote the corresponding isospectral variety by  $Z_\uni$.
Recall that we denote 
\begin{equation}
\mathcal{QK}(Fl_n)=\mathbb{C}[Z_\uni],
\end{equation}
which is our working definition of the quantum $K$-theory of $Fl_n.$

We will show below that $\alpha$ is a birational morphism of algebraic varieties.
We also describe open parts that are isomorphic via the map $\alpha$ explicitly. 
As the corresponding isomorphism between the coordinate rings, we obtain the 
$K$-theory analogue of the Peterson isomorphism.

\begin{definition}\label{def:tau}
Fix a linear isomorphism 
$\pmb{c}: \mathscr{O}_\gamma\rightarrow \mathbb{C}^n.$ For $0\leq j\leq n$, and $\varphi\in \mathscr{O}_\gamma$, let $\pmb{a}_j=\pmb{c}(\zeta^j),\;\pmb{b}_j=\pmb{c}(\varphi\zeta^j).$ Define for $1\leq i\leq n$,
\begin{align*}
T_i(\varphi)&=|\pmb{b}_0,\pmb{b}_1,\cdots,\pmb{b}_{i-1},\pmb{a}_{i-1},\cdots,\pmb{a}_{n-2}|,\\
S_i(\varphi)&=|\pmb{b}_0,\pmb{b}_1,\cdots,\pmb{b}_{i-1},\pmb{a}_{i},\cdots,\pmb{a}_{n-1}|.
\end{align*}
\end{definition}
Note that a different choice of $\pmb{c}$ yields a change 
$T_i\mapsto c^iT_i,\; S_i\mapsto c^i S_i$ with a nonzero constant $c\in \mathbb{C}^*.$ Such change does not effect the following constructions, however, 
we choose $\pmb{c}$ so that $|\pmb{a}_0,\ldots,\pmb{a}_{n-1}|=1.$
For each $i$, both $T_i$ and $S_i$ are homogenous polynomial functions in $\mathbb{C}[\mathscr{O}_\gamma]$ of degree $i.$
Let $Y_\gamma=\mathbb{P}(\mathscr{O}_\gamma)$
and  
define a Zariski open set
\begin{equation}
Y_\gamma^\circ=\{[\varphi]\in\mathbb{P}(\mathscr{O}_\gamma)
\;|\;
T_i(\varphi)\neq 0,\;S_i(\varphi)\neq 0\;(1\leq i\leq n) 
\}.
\end{equation}
Our first main result is the following.

\begin{theorem}\label{thm:isom} The map
$\alpha$ gives an isomorphism from $Z_\gamma^\circ$ to $Y_\gamma^\circ$
as affine algebraic varieties.
\end{theorem}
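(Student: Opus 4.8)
The plan is to establish the isomorphism $\alpha\colon Z_\gamma^\circ \xrightarrow{\sim} Y_\gamma^\circ$ by constructing an explicit inverse morphism $\beta\colon Y_\gamma^\circ \to Z_\gamma^\circ$ and checking that $\alpha\circ\beta$ and $\beta\circ\alpha$ are the identity. First I would set up the conceptual picture promised in the text: identify $\mathscr{O}_\gamma = \mathbb{C}[\zeta]/(f_\gamma(\zeta))$ with the centralizer of the companion matrix $C_\gamma$ of $f_\gamma$, so that $\mathbb{P}(\mathscr{O}_\gamma)$ parametrizes lines of matrices commuting with $C_\gamma$. The Lax matrix $L = A(z)B(z,Q)^{-1}$ has characteristic polynomial $f_\gamma$, hence is conjugate to $C_\gamma$; the point $\alpha(L) = [\Delta_{1,1}]$ should be interpreted as recording the adjoint/cyclic-vector data of this conjugation, so that $\Delta_{1,1}$, viewed in $\mathscr{O}_\gamma$, is (up to scalar) the element whose image under the identification with the centralizer intertwines $L$ and $C_\gamma$. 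Concretely, the entries $\Delta_{i,j}(z,Q)$ are the cofactors of $\zeta B - A$, so the row vector $(\Delta_{1,1},\dotsc,\Delta_{1,n})$ spans the kernel of $(\zeta B - A)$ over $\mathscr{O}_\gamma$ and carries all the information of $(z,Q)$.

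Next I would make the map $\beta$ explicit. Given $[\varphi]\in Y_\gamma^\circ$, the non-vanishing of the $T_i(\varphi)$ and $S_i(\varphi)$ guarantees that the vectors $\pmb b_0,\dotsc,\pmb b_{i-1},\pmb a_{i-1},\dotsc,\pmb a_{n-2}$ (resp. with $\pmb a_i,\dotsc,\pmb a_{n-1}$) are bases of $\mathscr{O}_\gamma$ for every $i$; these are exactly the genericity conditions needed to perform a Gram--Schmidt-type / LU-type factorization. I would recover $z_i$ and $Q_i$ from ratios of these determinants — the expected formulas being of the shape $z_i = \dfrac{T_i\, S_{i-1}}{S_i\, T_{i-1}}$ and $Q_i = \dfrac{T_{i-1}T_{i+1}}{T_i^{2}}$ (matching the shape of (\ref{eq:zQ}) once the $\tau,\sigma$ are identified with the $T,S$ in Section 4) — and then verify directly that the resulting $(z,Q)$ lies in $Z_\gamma$, i.e. that $F_i(z,Q) = \gamma_i$. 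That last verification is where the isospectral identity is used: since $f_\gamma$ is built into $\mathscr{O}_\gamma$, the determinants $T_i, S_i$ satisfy bilinear (Plücker / Toda-$\tau$-function) relations forcing the $F_i$ to collapse to the coefficients of $f_\gamma$. One then checks that $Q_i \ne 0$ on the image (so it lands in $Z_\gamma^\circ$, not merely $Z_\gamma$) — this follows from $T_{i-1}, T_i, T_{i+1}$ all being nonzero.

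The two compositions are then verified as follows. For $\beta\circ\alpha = \mathrm{id}_{Z_\gamma^\circ}$: starting from $(z,Q)$, compute $\alpha(z,Q) = [\Delta_{1,1}]$, then evaluate $T_i(\Delta_{1,1})$ and $S_i(\Delta_{1,1})$ as explicit polynomials in $z,Q$ — I expect these to factor as products of the $z_j$ (and $1-Q_j$) in a telescoping pattern, so that the ratios reproduce $z_i$ and $Q_i$ on the nose; this is a determinant computation expanding $\Delta_{1,1}\zeta^j$ modulo $f_\gamma$ against the standard basis. For $\alpha\circ\beta = \mathrm{id}_{Y_\gamma^\circ}$: given $[\varphi]$ with all $T_i,S_i\ne 0$, form $(z,Q) = \beta([\varphi])$, build $\Delta_{1,1}(z,Q)\in\mathscr{O}_\gamma$, and show $[\Delta_{1,1}(z,Q)] = [\varphi]$; here the key is that $\varphi$ is determined up to scalar by the full collection of ratios $T_i(\varphi)/T_{i-1}(\varphi)$, $S_i(\varphi)/S_{i-1}(\varphi)$ (an argument by induction on the expansion of $\varphi$ in the flag of subspaces cut out by the $\pmb a_j$'s), and these ratios are exactly what $\beta$ preserves. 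Finally I would note that both $\alpha$ and $\beta$ are morphisms of affine varieties (all formulas are polynomial, resp. rational with invertible denominators on the relevant open sets), so mutual inverse polynomial maps give an isomorphism of affine varieties, which is the assertion of Theorem~\ref{thm:isom}.

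The main obstacle I anticipate is the bookkeeping in the two verification steps — in particular proving that $F_i(\beta([\varphi])) = \gamma_i$ identically on $Y_\gamma^\circ$, which amounts to extracting the right bilinear identities among the $T_i,S_i$ from the single relation $f_\gamma(\zeta)=0$ in $\mathscr{O}_\gamma$. It is exactly the place where ``isospectrality'' has to be turned into an algebraic identity, and getting the indices and signs in the determinant manipulations to line up (rather than any deep difficulty) is the real work; the cleanest route is probably to phrase everything in terms of the cyclic $\mathbb{C}[\zeta]$-module structure on $\mathscr{O}_\gamma$ and the factorization $\zeta B - A$ over $\mathscr{O}_\gamma$, so that $T_i, S_i$ become minors of a single transition matrix between two natural bases and the required relations are Sylvester/Plücker identities.
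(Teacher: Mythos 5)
Your overall skeleton (construct an explicit inverse $\beta$, use the centralizer picture for $\mathscr{O}_\gamma$, interpret $\Delta_{1,1}$ as intertwining data, and invoke LU-type factorizations governed by the non-vanishing of $T_i$ and $S_i$) matches the paper's strategy. But there is a genuine gap at the heart of your construction of $\beta$. You propose to \emph{define} $\beta([\varphi])$ by the formulas $z_i = T_iS_{i-1}/(S_iT_{i-1})$ and $Q_i = T_{i-1}T_{i+1}/T_i^2$ and then to \emph{verify} that $F_i(z,Q)=\gamma_i$, and you yourself flag this verification as "the real work," to be extracted from unspecified bilinear/Pl\"ucker identities among the $T_i,S_i$. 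That step is never carried out, and it is exactly the content of the theorem: those determinantal identities are highly nontrivial (the introduction of the paper remarks that substituting the $\tau$-function expressions into $F_i$ and getting $\binom{n}{i}$ is a "remarkable identity"). The paper avoids this entirely by defining $\beta$ differently: given a normalized $\varphi$, condition $(\mathrm{Y}_2)$ (via Lemma \ref{lem:tau-det}) lets one factor $\varphi(C_\gamma)=U^{-1}R$ with $R\in\pmb{B}\sigma$, $U\in\pmb{N}_{\!-}\varepsilon$ (Proposition \ref{prop:RU}), and then one sets $L:=RC_\gamma R^{-1}=UC_\gamma U^{-1}$. Since $L$ is conjugate to $C_\gamma$ \emph{by construction}, it automatically has characteristic polynomial $f_\gamma$, so $F_i(z,Q)=\gamma_i$ comes for free; the two triangular shapes of $U$ and $R$ give $(\mathrm{Z}_1)$ and $(\mathrm{Z}_2)$, and $(\mathrm{Z}_3)$, $(\mathrm{Z}_4)$ follow from $(\mathrm{Y}_3)$ and invertibility of $R$. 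The formulas for $z_i,Q_i$ in terms of $T_i,S_i$ are then a \emph{consequence} (Proposition \ref{prop:zQST}), not the definition of $\beta$.

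A second, related soft spot: your argument for $\alpha\circ\beta=\mathrm{id}$ rests on the claim that $[\varphi]$ is determined by the collection of ratios $T_i/T_{i-1}$, $S_i/S_{i-1}$, justified only by a sketched "induction on the expansion of $\varphi$." That claim is essentially equivalent to injectivity of $\beta$, i.e.\ to a large part of the theorem, so it cannot be assumed. In the paper both compositions are immediate because $\alpha$ and $\beta$ are expressed through the \emph{same} factorization data: for $L\in Z_\gamma^\circ$ one produces $U,R$ with $LR=RC_\gamma$, $LU=UC_\gamma$ from the cofactor vectors of $\zeta B-A$ (Proposition \ref{prop:RUexist}), shows $U^{-1}R=\Delta_{1,1}(C_\gamma)$, and observes that $\beta$ simply re-factors $\varphi(C_\gamma)$ into the same $U^{-1}R$ and conjugates back. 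If you want to salvage your route, you would need to either (a) prove the determinantal identities $F_i\bigl(z(\varphi),Q(\varphi)\bigr)=\gamma_i$ directly, or (b) replace your definition of $\beta$ by the conjugation construction, at which point your plan collapses into the paper's proof.
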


Now we apply this to unipotent case, namely the case when
$\gamma_i=\binom{n}{i}.$ We choose  
$\pmb{c}: \mathscr{O}_\gamma\rightarrow \mathbb{C}^n,\;
\varphi\mapsto {}^t(c_0,c_1,\ldots,c_{n-1})$
as
\begin{equation}
\varphi=
\sum_{i=0}^{n-1}(-1)^ic_i\cdot(\zeta-1)^i. \label{eq:ci}
\end{equation}
Then we have $T_n=S_n=c_0^n$. So $Y_\uni^\circ$ is
an open subvariety of the affine 
open set $U_0$ of $\mathbb{P}(\mathscr{O}_\uni)$ defined by $c_0\neq 0$. 
We identify the coordinate ring $\mathbb{C}[c_1/c_0,\ldots,c_{n-1}/c_0]$ of 
$U_0$  with $\Lambda_{(n)}=\mathbb{C}[h_1,\ldots,h_{n-1}]$ by 
\begin{equation}
h_i=c_i/c_0\quad (1\leq i\leq n-1).
\end{equation}
Using this identification, we will prove (see \S \ref{sec:tau})
\begin{equation}
\tau_i=T_i/c_0^i,\quad \sigma_i=S_i/c_0^i\quad(1\leq i\leq n-1).
\label{eq:taudet}
\end{equation}
 Via the isomorphism $K({Gr}_{SL_n})\simeq \Lambda_{(n)}$, we have 
$\mathbb{C}[Y_\uni^\circ]=K({Gr}_{SL_n})[\tau_i^{-1},\sigma_i^{-1}].$
\begin{corollary}\label{cor:KPisom} We have the following isomorphism of rings:
\begin{equation*}
\Phi_n:
\mathcal{QK}(Fl_n)[Q_i^{-1}(1\leq i\leq n-1)]
\overset{\sim}{\longrightarrow} K({Gr}_{SL_n})[\tau_i^{-1},\sigma_i^{-1}(1\leq i\leq n-1)].
\end{equation*}
\end{corollary}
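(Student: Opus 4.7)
The plan is to deduce this corollary directly from Theorem \ref{thm:isom} specialized to the unipotent parameter $\gamma_i = \binom{n}{i}$, combined with the identification of $\tau$-functions given by (\ref{eq:taudet}). The content is essentially a translation: Theorem \ref{thm:isom} gives the isomorphism $\alpha^{*} \colon \mathbb{C}[Y_\uni^\circ] \xrightarrow{\sim} \mathbb{C}[Z_\uni^\circ]$, and the corollary is obtained by rewriting both sides of this identification as the stated localizations.

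For the right-hand side, I would use the choice of linear isomorphism $\pmb{c}$ in (\ref{eq:ci}), under which $T_n = S_n = c_0^n$, so $Y_\uni^\circ$ lies in the affine chart $U_0 = \{c_0 \neq 0\}$ whose coordinate ring $\mathbb{C}[c_1/c_0, \ldots, c_{n-1}/c_0]$ is identified with $\Lambda_{(n)}$ via $h_i = c_i/c_0$. The formulas $T_i/c_0^i = \tau_i$ and $S_i/c_0^i = \sigma_i$ in (\ref{eq:taudet}) then yield $\mathbb{C}[Y_\uni^\circ] = \Lambda_{(n)}[\tau_i^{-1}, \sigma_i^{-1}]$, and the Lam--Schilling--Shimozono isomorphism (\ref{eq:LSSisom}) rewrites this as $K_{*}(Gr_{SL_n})[\tau_i^{-1}, \sigma_i^{-1}]$.

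For the left-hand side, one needs to identify $\mathcal{QK}(Fl_n)[Q_i^{-1}] = \mathbb{C}[Z_\uni][Q_i^{-1}]$ with $\mathbb{C}[Z_\uni^\circ]$, where $Z_\uni^\circ$ in Theorem \ref{thm:isom} is by construction the $\alpha$-preimage of $Y_\uni^\circ$, i.e., the open subset of $Z_\uni$ on which each $T_i(\Delta_{1,1})$ and $S_i(\Delta_{1,1})$ is nonzero. Matching the two localization descriptions reduces to checking that the multiplicative sets generated by $\{Q_1, \ldots, Q_{n-1}\}$ and by $\{\alpha^{*}(T_i), \alpha^{*}(S_i)\}$ inside $\mathbb{C}[Z_\uni]$ are cofinal. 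One direction is immediate from the eventual pullback formula (\ref{eq:zQ}) derived in \S \ref{ssec:Phi}: since $\alpha^{*}(Q_i) = \tau_{i-1}\tau_{i+1}/\tau_i^2$, inverting every $\tau_i, \sigma_i$ automatically inverts each $Q_i$. The reverse inclusion — that inverting each $Q_i$ suffices to invert $\alpha^{*}(\tau_i)$ and $\alpha^{*}(\sigma_i)$ — has to be extracted from the explicit shape of the minors $\Delta_{1,1}$ of $\zeta B - A$ and the lower-bidiagonal form of $B$ in (\ref{eq:L}), where the $Q_i$ in the subdiagonal force $\alpha^{*}(T_i), \alpha^{*}(S_i)$ to be, up to units, monomials in $z_j$ and $Q_j$.

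Once both sides are so identified, $\alpha^{*}$ becomes a ring isomorphism $\mathcal{QK}(Fl_n)[Q_i^{-1}] \xrightarrow{\sim} K_{*}(Gr_{SL_n})[\tau_i^{-1}, \sigma_i^{-1}]$, which we name $\Phi_n$. The explicit formulas (\ref{eq:zQ}) for $\Phi_n$ follow by directly computing $\alpha^{*}(z_i)$ and $\alpha^{*}(Q_i)$ from Definition \ref{def:alpha} using Jacobi-type expansions of $\Delta_{1,1}$; this is the content of Propositions \ref{prop:zQST}, \ref{lemma:kappa}, \ref{prop:kappasigma} cited in Theorem \ref{thm:main-intro}. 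The main obstacle is the cofinality check just described, namely that the two natural open subvarieties of $Z_\uni$ (the locus $\{Q_1 \cdots Q_{n-1} \neq 0\}$ appearing in the definition of $\mathcal{QK}(Fl_n)_{\mathrm{loc}}$ and the locus $\alpha^{-1}(Y_\uni^\circ)$ of Theorem \ref{thm:isom}) coincide; everything else is routine once the $\tau$-function identities (\ref{eq:taudet}) and the pullback formulas (\ref{eq:zQ}) are in place.
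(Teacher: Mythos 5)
Your route is the same as the paper's: specialize Theorem \ref{thm:isom} to $\gamma=\mathrm{uni}$ and translate the two coordinate rings, with the right-hand side handled exactly as you describe (the chart $U_0$, the identification $h_i=c_i/c_0$, and (\ref{eq:taudet})). The one point to correct is your reading of $Z_\uni^\circ$: in the paper it is \emph{defined} as $Z_\uni\cap Z^\circ$, where $Z^\circ$ is cut out by condition $(\mathrm{Z}_4)$, i.e.\ $Q_i\neq 0$ for all $i$ --- it is not defined as $\alpha^{-1}(Y_\uni^\circ)$. Consequently $\mathbb{C}[Z_\uni^\circ]=\mathcal{QK}(Fl_n)[Q_i^{-1}]$ holds by definition, and the corollary follows immediately from Theorem \ref{thm:isom} with no further argument on the left-hand side. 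The ``cofinality check'' you single out as the main obstacle is therefore not an extra step at the level of the corollary: it is exactly the substance of the proof of Theorem \ref{thm:isom}, namely the verification that $\alpha^\circ$ carries a point satisfying $(\mathrm{Z}_3)$--$(\mathrm{Z}_4)$ to one satisfying $(\mathrm{Y}_0)$--$(\mathrm{Y}_3)$ and that $\beta$ does the converse, which rests on the monomial identities $\xi^{1,\ldots,i}_{1,\ldots,i}(U^{-1}R)=(-1)^i z_{i+1}\cdots z_n Q_1^{i-1}\cdots Q_{i-1}$ together with (\ref{eq:r-ratio}) and (\ref{eq:QQQ}). Your sketch of that check is correct in substance (the $T_i$, $S_i$ do pull back to units times monomials in the $Q_j$), so nothing is mathematically missing; you have simply re-derived inside the corollary a piece of work that the paper places inside the proof of Theorem \ref{thm:isom}.
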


The explicit formula of $\Phi_n$ (the second statement of Theorem \ref{cor:KPisom}) will be derived below in Proposition \ref{prop:zQST} in \S \ref{ssec:Phi}
together with Propositions \ref{lemma:kappa} and \ref{prop:kappasigma}.
\setcounter{equation}{0}
\section{Proof of Theorem \ref{thm:isom}}
This section is devoted to the proof of Theorem \ref{thm:isom}.

\subsection{Gauss decomposition}

Let $\pmb{B}$ (resp. $\pmb{B}_{\!-}$) denote the Borel subgroup of $GL_n(\mathbb{C})$
consisting of upper (resp. lower) triangular matrices. Let $\pmb{N}_{\!-}$ (resp. $\pmb{N}$) denote the subgroup consisting of the  
unipotent lower (resp. upper) triangular matrices.  

\begin{proposition}\label{prop:AB}
A square matrix $X$ of size $n$ can be  expressed 
as $X=X_{+}\cdot X_{-}$ with $X_{+}\in \pmb{B}$, and $X_{-}\in \pmb{N}_{\!-}$, 
if and only if
$\xi_{i+1,\ldots,n}^{i+1,\ldots,n}(X)\neq 0$ for $0\leq i\leq n-1.$
\end{proposition}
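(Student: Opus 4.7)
The plan is to prove the two implications separately: the forward direction is a direct block computation, while the reverse direction proceeds by induction on $n$ via the Schur complement.

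For the forward direction, suppose $X = X_+X_-$ with $X_+ = (u_{ij}) \in \pmb{B}$ and $X_- = (\ell_{ij}) \in \pmb{N}_-$. Because $u_{ak}=0$ for $k<a$ and $\ell_{kb}=0$ for $k>b$, when $a,b\geq i+1$ the sum $X_{ab}=\sum_k u_{ak}\ell_{kb}$ only involves indices $k\geq i+1$. Thus the trailing block decomposes as
$$
X[i+1{:}n,\,i+1{:}n]=X_+[i+1{:}n,\,i+1{:}n]\cdot X_-[i+1{:}n,\,i+1{:}n],
$$
and taking determinants yields
$$
\xi_{i+1,\dotsc,n}^{i+1,\dotsc,n}(X)=\prod_{j=i+1}^{n}u_{jj},
$$
which is nonzero since $X_+$ is invertible upper triangular.

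For the reverse direction I would argue by induction on $n$, the case $n=1$ being trivial. For the inductive step, write
$$
X=\begin{pmatrix}A & b\\ c^{T} & d\end{pmatrix},
$$
with $A$ of size $(n-1)\times (n-1)$. The case $i=n-1$ of the hypothesis gives $d=x_{nn}\neq 0$, which allows one to seek a factorization of the block form
$$
X_+=\begin{pmatrix}A' & b\\ 0 & d\end{pmatrix},\qquad X_-=\begin{pmatrix}L & 0\\ d^{-1}c^{T} & 1\end{pmatrix}.
$$
Direct multiplication shows that $X=X_+X_-$ is equivalent to $A'L=A-bc^{T}/d$, the Schur complement of $d$ in $X$. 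Applying the classical Schur-complement determinantal identity to the $(k+1)\times(k+1)$ trailing submatrix of $X$ yields, for $1\leq k\leq n-1$,
$$
\xi^{n-k,\dotsc,n-1}_{n-k,\dotsc,n-1}(A-bc^{T}/d)=\frac{1}{d}\,\xi^{n-k,\dotsc,n}_{n-k,\dotsc,n}(X),
$$
so the trailing principal minors of the $(n-1)\times(n-1)$ Schur complement are all nonzero. The induction hypothesis then produces $A'\in\pmb{B}_{n-1}$ and $L\in\pmb{N}_{-,n-1}$ with $A'L=A-bc^{T}/d$, completing the construction.

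For completeness I would also note uniqueness: any two such factorizations $X=X_+X_- = X'_+X'_-$ give $X_+^{\prime\,-1}X_+=X'_-X_-^{-1}$, whose left-hand side lies in $\pmb{B}$ and right-hand side in $\pmb{N}_-$, while $\pmb{B}\cap\pmb{N}_-=\{I\}$. The main subtlety of the argument is the Schur-complement identity for trailing principal minors; once this is invoked with the correct reindexing, the induction proceeds cleanly, but one must be careful that the Schur complement sits in the upper-left $(n-1)\times(n-1)$ corner so that its \emph{trailing} minors (not leading ones) are the relevant quantities for applying the induction hypothesis.
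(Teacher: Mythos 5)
Your proof is correct. Note, though, that the paper does not actually prove this proposition: it simply observes that this is the Gauss/LU (more precisely, ``UL'') factorization and cites a standard reference, so your self-contained argument is doing more work than the original. Your two directions are both sound: the forward direction correctly reduces to the fact that the trailing $(n-i)\times(n-i)$ block of a product $X_+X_-$ with $X_+$ upper triangular equals the product of the trailing blocks (this already follows from $u_{ak}=0$ for $k<a$ alone; your stated condition ``$\ell_{kb}=0$ for $k>b$'' is a slip --- for $X_-\in\pmb{N}_{\!-}$ it should be $k<b$ --- but it is not actually used, so nothing breaks), and the reverse direction via induction on the Schur complement $A-bc^{T}/d$ is the standard constructive proof, with the key point being exactly the one you flag: the trailing principal minors of the Schur complement are $d^{-1}$ times the trailing principal minors of $X$ of one larger size, so the induction hypothesis applies. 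The uniqueness remark via $\pmb{B}\cap\pmb{N}_{\!-}=\{I\}$ is a correct bonus not required by the statement. In short: same mathematical content as the classical result the authors invoke, but actually proved rather than cited.
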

\begin{proof}
This is the factorization known as the Gauss or the LU-decomposition.  
The result is standard. See \cite{Noumi9th} for example. 
\end{proof}

Let $\sigma$ denote the matrix $\sum_{i=1}^{n-1}E_{i+1,i}+E_{1,n}$
which represents the cyclic permutation $(1,\ldots,n).$
Let $\varepsilon:=\mathrm{diag}(1,-1,1,\ldots,(-1)^{n-1}).$

\begin{proposition}\label{prop:RU}
Let $X$ be a square matrix $n$ such that $x_{1,n}\neq 0$. Then $X$ can be  expressed 
as $X=U^{-1} R$ with $R=(r_{ij})\in \pmb{B}\sigma,\;U=(u_{ij})\in \pmb{N}_{\!-}\varepsilon$, 
if and only if
$\xi_{1,\ldots,i-1,i}^{1,\ldots,i-1,n}(X)\neq 0$ for $2\leq i\leq n-1.$
Moreover, if such decomposition exists, we have
\begin{equation}
r_{i+1,i}=(-1)^{i+1}\frac{\xi^{1,\ldots,i,n}_{1,\ldots,i,i+1}(X)}{\xi_{1,\ldots,i-1,i}^{1,\ldots,i-1,n}(X)}\quad (1\leq i\leq n-1).\label{eq:r-ratio}
\end{equation}
\end{proposition}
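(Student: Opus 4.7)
The plan is to translate $X = U^{-1}R$ into $UX = R$ and analyze it row by row. A preliminary observation determines the shape of $R = b\sigma$ with $b \in \pmb{B}$: since $\sigma$ cyclically shifts columns, $R_{k,j} = b_{k,j+1}$ for $j < n$ and $R_{k,n} = b_{k,1}$, so $R$ is upper-Hessenberg in its first $n-1$ columns (nonzero entries only on the subdiagonal and above), while its last column vanishes except for $R_{1,n} = b_{1,1}$. Similarly $U = u\varepsilon$ is lower triangular with fixed diagonal $U_{k,k} = (-1)^{k-1}$ and free strictly lower entries. The equation $UX = R$ therefore decouples into independent systems for each row of $U$: for row $i$ (with $i \geq 2$), the entries of $UX$ in columns $1, \ldots, i-2$ and column $n$ must all vanish, giving exactly $i-1$ linear constraints on the $i-1$ free entries of row $i$ of $U$.

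My first step would be the existence-and-uniqueness half. The coefficient matrix of this linear system is the submatrix of $X$ with rows $1, \ldots, i-1$ and columns $1, \ldots, i-2, n$, whose determinant is $\xi^{1, \ldots, i-2, n}_{1, \ldots, i-1}(X)$; after re-indexing, this is exactly the minor $\xi^{1, \ldots, i-1, n}_{1, \ldots, i-1, i}(X)$ appearing in the hypothesis, and the base case $i = 2$ is the assumption $x_{1,n} \neq 0$. Nonvanishing of these minors produces a unique $U$, and then $R := UX$ lies in $\pmb{B}\sigma$ provided the diagonal entries $r_{j+1,j}$ (computed next) are all nonzero. Conversely, given a decomposition $X = U^{-1}R$ with $b \in \pmb{B}$, the same linear system has a solution, which forces the minors to be nonzero.

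The formula for $r_{i+1,i}$ would be extracted by a block-determinant identity. Using the fact that rows $1, \ldots, i+1$ of $UX$ involve only rows $1, \ldots, i+1$ of $X$, I would compute the minor of $UX = R$ at rows $1, \ldots, i+1$ and columns $1, \ldots, i-1, i, n$ in two ways. On the $X$-side it factors as $\det U_{[1, \ldots, i+1]} \cdot \xi^{1, \ldots, i, n}_{1, \ldots, i+1}(X)$ with the first factor equal to $(-1)^{i(i+1)/2}$. On the $R$-side, the last column contains only the single nonzero entry $R_{1,n} = x_{1,n}$, so cofactor expansion yields $(-1)^{i+2} x_{1,n}$ times a minor of $R$ which, after shifting row indices by one, is upper triangular with diagonal $r_{2,1}, r_{3,2}, \ldots, r_{i+1,i}$. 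Equating the two expressions and dividing by the analogous identity at index $i-1$ isolates $r_{i+1,i}$, with the sign $(-1)^{i+1}$ coming from the telescoping cancellation.

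The main obstacle will be careful bookkeeping of signs: the factors $(-1)^{i(i+1)/2}$ from $\det U$, $(-1)^{i+2}$ from the cofactor expansion, and their counterparts at index $i-1$ must combine to exactly the asserted $(-1)^{i+1}$. Beyond this, the argument is a direct application of LU-style reconstruction once the zero pattern imposed by $\pmb{B}\sigma$ is understood, and the existence question reduces to non-degeneracy of a sequence of explicit square subsystems.
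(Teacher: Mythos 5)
The paper gives no proof of this proposition at all --- it is treated as a standard variant of the Gauss decomposition (compare Proposition \ref{prop:AB} and the reference to \cite{Noumi9th}) --- so there is no argument to compare against; judged on its own, your plan is the right one and the computations do close. I checked the bookkeeping: writing the minor of $UX=R$ on rows $1,\dots,i+1$ and columns $1,\dots,i,n$ in two ways gives
\begin{equation*}
(-1)^{i(i+1)/2}\,\xi^{1,\dots,i,n}_{1,\dots,i+1}(X)\;=\;(-1)^{i}\,x_{1,n}\,r_{2,1}r_{3,2}\cdots r_{i+1,i},
\end{equation*}
and the quotient of consecutive identities produces the sign $(-1)^{i(i-1)/2-(i-1)(i-2)/2}=(-1)^{i-1}=(-1)^{i+1}$, confirming (\ref{eq:r-ratio}); the identification of the coefficient matrix of the row-$i$ system with the hypothesis minor at index $i-1$ is also correct.

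Two points need tightening. First, your converse (``the same linear system has a solution, which forces the minors to be nonzero'') is a non sequitur: a square system can be solvable with singular coefficient matrix. The correct route is the displayed determinant identity, which uses only the zero patterns of $U$ and $R$ and therefore holds for \emph{any} decomposition $X=U^{-1}R$; since $x_{1,n}\neq 0$ and each $r_{a+1,a}$ is a diagonal entry of an element of $\pmb{B}$, the identity forces $\xi^{1,\dots,i-1,n}_{1,\dots,i-1,i}(X)\neq 0$. Second, in the existence direction your construction yields $R=UX$ with the correct zero pattern, but membership in $\pmb{B}\sigma$ also requires every subdiagonal entry to be nonzero, and by your own formula $r_{n,n-1}$ is proportional to $\xi^{1,\dots,n-1,n}_{1,\dots,n}(X)=\det X$, which is not among the stated hypotheses. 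So the ``if'' direction needs $X$ invertible; this is implicit in the paper (in the application, condition $(\mathrm{Y}_0)$ supplies it), but your write-up should either assume it or observe that the remaining entries $r_{a+1,a}$, $a\le n-2$, are nonzero by the hypothesis while the last one requires $\det X\neq 0$.
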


\subsection{The variety $Z$ of Lax matrices}\label{ssec:Lax}
Let $J=\sum_{i=1}^{n-1}E_{i,i+1}.$
Let $Z$ denote the set of matrices $L$ in $SL_n(\mathbb{C})$ satisfying the following conditions:
\begin{itemize}
\item[$(\mathrm{Z_1})$]: $L+J$ is a lower triangular matrix,
\item[$(\mathrm{Z_2})$]:  all entries of $L^{-1}$ further down the second subdiagonal are zero,
\item[$(\mathrm{Z_3})$]:  $\xi_{i+1,\ldots,n}^{i+1,\ldots,n}(L)\neq 0$ for $1\leq i\leq n-1.$
\end{itemize}

Let $T$ be the subgroup of $(\mathbb{C}^{*})^n$ 
consisting of $(z_1,\ldots,z_n)$ such that $z_1\cdots z_n=1.$ 
\begin{proposition}\label{prop:LAB}
The map $
T\times \mathbb{C}^{n-1}
\rightarrow Z$ defined by sending 
$(z,Q)$ with $z\in T,\;Q=(Q_1,\ldots,Q_{n-1})\in\mathbb{C}^{n-1}$ 
to $L=AB^{-1}$ with 
\begin{equation}
A=\sum_{i=1}^nz_iE_{i,i}-J,\quad
B=1-\sum_{i=1}^{n-1}Q_i z_iE_{i+1,i},\label{eq:defL}
\end{equation}
is an isomorphism of algebraic varieties.
\end{proposition}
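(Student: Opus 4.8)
The statement to prove is Proposition~\ref{prop:LAB}: the map $T\times\mathbb{C}^{n-1}\to Z$, $(z,Q)\mapsto L=AB^{-1}$ with $A,B$ as in \eqref{eq:defL}, is an isomorphism of algebraic varieties. The plan is to show three things in sequence: (a) the image actually lands in $Z$, i.e.\ $L=AB^{-1}$ satisfies conditions $(\mathrm{Z_1})$--$(\mathrm{Z_3})$ and $\det L=1$; (b) the assignment is a morphism of varieties, which is immediate once (a) is in place, since $B$ is unipotent lower triangular so $B^{-1}$ has polynomial entries in the $Q_iz_i$, and hence every matrix entry of $L$ is a polynomial in $z,Q$ (with $z_1\cdots z_n=1$); and (c) the map is bijective with regular inverse, by producing explicit formulas recovering $(z,Q)$ from $L$.

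\textbf{Step (a): the image lies in $Z$.} Since $\det A=z_1\cdots z_n=1$ (the $-J$ part is strictly upper triangular and does not affect the determinant) and $\det B=1$, we get $\det L=1$, so $L\in SL_n(\mathbb{C})$. For $(\mathrm{Z_1})$: write $A=D-J$ with $D=\mathrm{diag}(z_1,\dots,z_n)$, so $L+J = AB^{-1}+J = (D-J)B^{-1}+J = DB^{-1} - J(B^{-1}-1)$. Now $B^{-1}-1$ is strictly lower triangular (as $B$ is unipotent lower triangular), so $J(B^{-1}-1)$, being $J$ times a strictly lower triangular matrix, is lower triangular; and $DB^{-1}$ is lower triangular; hence $L+J$ is lower triangular. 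For $(\mathrm{Z_2})$: compute $L^{-1}=BA^{-1}$. Here $A^{-1}$ is upper triangular (as $A=D-J$ is upper triangular with nonzero diagonal), and $A^{-1}$ in fact has a simple form, $A^{-1}=\sum_{i\le j} z_{i}^{-1}z_{i+1}^{-1}\cdots z_j^{-1} E_{ij}$ up to indexing, so it is \emph{full} upper triangular; but $B=1-\sum Q_iz_iE_{i+1,i}$ has only the main diagonal and first subdiagonal, so $BA^{-1}=A^{-1}-\sum_iQ_iz_iE_{i+1,i}A^{-1}$ has all entries on or above the first subdiagonal and nothing strictly below it --- i.e.\ entries more than one step below the main diagonal vanish, which is exactly $(\mathrm{Z_2})$. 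For $(\mathrm{Z_3})$: the trailing principal minor $\xi^{i+1,\dots,n}_{i+1,\dots,n}(L)$. Using $L=AB^{-1}$ and the Cauchy--Binet / block-triangular structure (the trailing $(n-i)\times(n-i)$ block of $B^{-1}$ is itself unipotent lower triangular, and the trailing block of $A$ is upper triangular with diagonal $z_{i+1},\dots,z_n$), one gets $\xi^{i+1,\dots,n}_{i+1,\dots,n}(L)=z_{i+1}z_{i+2}\cdots z_n\ne 0$.

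\textbf{Step (c): bijectivity and regular inverse.} This is where the real content lies, and I expect the bookkeeping here to be the main obstacle --- not conceptually deep, but one must pin down exactly which entries of $L$ recover $z_i$ and $Q_i$ and check the formulas are regular on all of $Z$. From $(\mathrm{Z_1})$, $L+J$ lower triangular, the diagonal entries of $L$ are $\ell_{ii}$; from $L=AB^{-1}$ and the computation $DB^{-1}-J(B^{-1}-1)$ above, one reads off the diagonal of $L$ as $\ell_{ii}=z_i + (\text{correction from }-J(B^{-1}-1))$. More robustly: use the trailing-minor formula from Step~(a), $\xi^{i+1,\dots,n}_{i+1,\dots,n}(L)=z_{i+1}\cdots z_n$, so
\begin{equation*}
z_i=\frac{\xi^{i,i+1,\dots,n}_{i,i+1,\dots,n}(L)}{\xi^{i+1,\dots,n}_{i+1,\dots,n}(L)}\qquad(1\le i\le n-1),\qquad z_n=\xi^{n}_{n}(L)=\ell_{nn},
\end{equation*}
which by $(\mathrm{Z_3})$ is a regular function on $Z$, and the product telescopes to $z_1\cdots z_n=\xi^{1,\dots,n}_{1,\dots,n}(L)=\det L=1$, confirming $z\in T$. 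Having recovered $z$, hence $A$, we recover $B=L^{-1}A$ wait --- rather $B = A L^{-1}$? From $L=AB^{-1}$ we get $B=L^{-1}A$; but it is cleaner to note $B^{-1}=A^{-1}L$, and then $Q_iz_i=-(B)_{i+1,i}$ is read from the first subdiagonal of $L^{-1}A$, giving $Q_i=-(L^{-1}A)_{i+1,i}/z_i$, a regular function wherever $z_i\ne 0$, i.e.\ on all of $Z$ after recovering $z$. (One checks $L^{-1}A=BA^{-1}A=B$ indeed.) These inverse formulas are polynomial/rational with nonvanishing denominators on $Z$, so the inverse map $Z\to T\times\mathbb{C}^{n-1}$ is a morphism; composing both ways gives the identity by construction, so $\alpha$-source map is an isomorphism. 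The one point requiring care is verifying that conditions $(\mathrm{Z_1})$--$(\mathrm{Z_3})$ are \emph{exactly} what is needed for the inverse formulas to make sense and for every such $L$ to arise --- in particular that $(\mathrm{Z_1})$ and $(\mathrm{Z_2})$ together force the Gauss-type decomposition $L^{-1}=BA^{-1}$ with $A$ of the prescribed form --- which is the delicate direction and should be handled using Proposition~\ref{prop:AB} applied to $L^{-1}$ or $L$ as appropriate.
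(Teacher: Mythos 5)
Your proposal is correct and follows essentially the same approach as the paper: verify $L=AB^{-1}$ satisfies $(\mathrm{Z_1})$--$(\mathrm{Z_3})$, then invert by applying the Gauss decomposition of Proposition~\ref{prop:AB} to a general $L\in Z$ and using $(\mathrm{Z_1}),(\mathrm{Z_2})$ to force the factors into the prescribed forms. Your recovery formula for $z_i$ (via trailing principal minors of $L$) differs cosmetically from the paper's (via the first row of $M=L^{-1}$, which as printed reads $z_i=M_{1,i}/M_{1,i-1}$ but should be $z_i=M_{1,i-1}/M_{1,i}$), and you, like the paper, leave the final "$A,B$ have the stated shape" check as routine.
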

\begin{proof} It is easy to see $L=AB^{-1}$ given by (\ref{eq:defL}) satisfies
($\mathrm{Z}_1$). $L^{-1}$ is given as follows:
\begin{equation}
\begin 
{pmatrix} \frac{1}{z_1}&{\frac {1}{z_{{1}}z_{{2}}}}&\cdots&\cdots&{\frac {1}{z_{{1}}z_{{2}}\cdots z_{{n}}}}\medskip
\\-Q_{{1}}&-{\frac {Q_{{1}}-1}{z_{{2}}}}&
\cdots&\cdots
&-\frac {Q_{{1}}-1}{z_2\cdots z_n}
\\ 0&-Q_{{2}}&\ddots&\cdots&\vdots\\ 
0&0&\ddots&-{\frac {Q_{{n-2}}-1}{z_{{n-1}}}}&-{\frac {Q_{{n-2}}-1}{z_{{n-1}}z_n}}
\\ 0&0&0&-Q_{{n-1}}&
-{\frac {\small{Q}_{{n-1}}-1}{z_{{n}}}}\end {pmatrix},\label{eq:Linv}
\end{equation}
and thereby ($\mathrm{Z}_2$) holds.
To see $L$ satisfies ($\mathrm{Z}_3$) we only need to notice that $L$ is factorized as in Proposition \ref{prop:AB}.
We construct the inverse map by using
Proposition \ref{prop:AB}.
Let $L\in Z$. We decompose it as $L=AB^{-1}$, where $A\in \pmb{B},\;
B^{-1}\in \pmb{N}_{\!-}.$  
Let $M=L^{-1}.$ 
Define $Q_i=-M_{i+1,i}$ and $z_i=M_{1,i}/M_{1,i-1}$ with $M_{1,0}=1.$
It is straightforward, by using $(\mathrm{Z_1})$
and $(\mathrm{Z_2})$, to check $A,B$ are given by (\ref{eq:defL}).
\end{proof}

Thus $Z$ is the affine variety whose coordinate ring $\mathbb{C}[Z]$ is 
$\mathbb{C}[z,Q]/(z_1\cdots z_n-1)$. 
We define the subset $Z^\circ$ of $Z$ by imposing the condition: 
\begin{itemize}
\item[$(\mathrm{Z_4})$]: $Q_i\neq 0\;(1\leq i\leq n-1).$
\end{itemize}
Note that $Z_\gamma$ defined by (\ref{eq:defZgamma}) is a closed 
subvariety of $Z$.
Let $Z_\gamma^\circ=Z_\gamma\cap Z^\circ.$

\subsection{Centralizer of $C_\gamma$}
Let $C_\gamma$ denote the companion matrix of $f_\gamma(\zeta).$
Explicitly $C_\gamma=J+\sum_{i=1}^n (-1)^{i-1}\gamma_i E_{n,n-i+1}.$
Let $\mathfrak{X}_\gamma$ denote
the set of all matrices that commute with $C_\gamma$.
Any $X\in \mathfrak{X}_\gamma$ is uniquely
expressed as a polynomial
\begin{equation}
X=
\sum_{i=0}^{n-1}\alpha_i\cdot C_\gamma^i\quad (\alpha_i\in \mathbb{C})\label{eq:phiC}
\end{equation}
in $C_\gamma$ of degree at most $n-1.$
This fact can be checked directly. 
In view of the Cayley-Hamilton theorem, the map from $\mathbb{C}[\zeta]$
sending $\varphi(\zeta)$ to
$\varphi(C_\gamma)$ induces an isomorphism 
$\mathscr{O}_\gamma\rightarrow \mathfrak{X}_\gamma$
of affine varieties. 
In the following, we identify $\mathscr{O}_\gamma$
with $\mathfrak{X}_\gamma$ via this map.
Let $Y_\gamma^\circ$ denote the subset of $\mathbb{P}(\mathscr{O}_\gamma)$ such that the representatives $\varphi\in \mathscr{O}_\gamma-\{0\}$ satisfy the following conditions :
\begin{itemize}
\item[$(\mathrm{Y}_0)$]: $\varphi(C_\gamma)$ is invertible.
\item[$(\mathrm{Y}_1)$]:  $(1,n)$ component of
$\varphi(C_\gamma)$ is non-zero.
\item[$(\mathrm{Y}_2)$]: $T_i(\varphi)\neq 0\;(1\leq i\leq n-1).$ 
\item[$(\mathrm{Y}_3)$]: $S_i(\varphi)\neq 0\;(1\leq i\leq n-1).$ 

\end{itemize}
Note that
$(\mathrm{Y}_1)$ is equivalent to the condition that
$\varphi(\zeta)$ can be chosen so that it has degree $n-1$.
We say 
$\varphi$ is {\it normalized\/} if it is monic of degree $n-1$. 

\begin{remark} It should be natural to consider the set of elements of $\mathbb{P}(\mathcal{O}_\gamma)$ satisfying 
($\mathrm{Y}_1$) as the centralizer of $[C_\gamma]$
in $PGL_n(\mathbb{C}),$ the Langlands dual group of $SL_n(\mathbb{C}).$
\end{remark}

$T_i(\varphi)$ and $S_i(\varphi)$ (Definition \ref{def:tau}) are given in terms of the matrix $\varphi(C_\gamma)$ as follows.
\begin{lemma}\label{lem:tau-det} We have the following:

(1) $T_i(\varphi)=(-1)^{n-i}\xi^{1,\ldots,i-1,n}_{1,\ldots,i-1,i}(\varphi(C_\gamma)).$

(2) $S_i(\varphi)=\xi^{1,\ldots,i}_{1,\ldots,i}(\varphi(C_\gamma)).$

\end{lemma}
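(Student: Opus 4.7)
The plan is to fix an explicit linear isomorphism $\pmb{c}\colon \mathscr{O}_\gamma \to \mathbb{C}^n$ and then evaluate the determinants defining $T_i$ and $S_i$ by Laplace expansion along the columns supplied by the $\pmb{a}_j$'s. Take $\pmb{c}$ to be the coefficient map in the monomial basis $\{1,\zeta,\ldots,\zeta^{n-1}\}$, so that $\pmb{a}_j = \pmb{c}(\zeta^j) = e_{j+1}$ for $0\le j\le n-1$; the required normalization $|\pmb{a}_0,\ldots,\pmb{a}_{n-1}| = 1$ is then automatic, so this $\pmb{c}$ is legitimate.

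The next step is to identify the columns $\pmb{b}_j = \pmb{c}(\varphi\zeta^j)$ in terms of $\Phi := \varphi(C_\gamma)$. The key observation is that multiplication by $\zeta$ on $\mathscr{O}_\gamma$, expressed in the monomial basis, is represented precisely by $C_\gamma^T$: this can be read off from $C_\gamma = J + \sum_{i=1}^n (-1)^{i-1}\gamma_i E_{n,n-i+1}$ by verifying that $C_\gamma^T e_k = e_{k+1}$ for $k<n$ and that $C_\gamma^T e_n$ is the coefficient vector of $\zeta^n$ modulo $f_\gamma(\zeta)$. Consequently, multiplication by $\varphi$ is represented by $\varphi(C_\gamma^T) = \Phi^T$, so
\[
\pmb{b}_j \;=\; \Phi^T \pmb{c}(\zeta^j) \;=\; \Phi^T e_{j+1},
\]
which is the $(j+1)$-th row of $\Phi$ viewed as a column vector.

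The final step is a Laplace expansion. In the determinant defining $T_i(\varphi)$, the last $n-i$ columns are the standard basis vectors $\pmb{a}_{i-1},\ldots,\pmb{a}_{n-2} = e_i,\ldots,e_{n-1}$. Expanding along these columns leaves a single surviving term, whose complementary minor uses the rows $\{1,\ldots,i-1,n\}$ of the matrix $[\pmb{b}_0,\ldots,\pmb{b}_{i-1}]$. By the preceding identification, this complementary block is the transpose of the submatrix of $\Phi$ on rows $\{1,\ldots,i\}$ and columns $\{1,\ldots,i-1,n\}$, so its determinant equals $\xi^{1,\ldots,i-1,n}_{1,\ldots,i}(\Phi)$. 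The Laplace sign $(-1)^{\sum I+\sum J}$ with $I=\{i,\ldots,n-1\}$ and $J=\{i+1,\ldots,n\}$ works out to $(-1)^{n-i}$, giving (1). The argument for $S_i$ is parallel: the omitted rows become $\{i+1,\ldots,n\}$, the complementary block is the principal $i\times i$ submatrix of $\Phi$, and the Laplace sign simplifies to $+1$, giving (2).

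The only non-trivial aspect is careful sign bookkeeping in the Laplace expansion; conceptually the argument is just the translation between the polynomial algebra $\mathscr{O}_\gamma$ and the matrix algebra $\mathfrak{X}_\gamma$ via $\varphi\mapsto \varphi(C_\gamma)$, combined with the observation that the $\pmb{a}_j$'s are standard basis vectors that collapse the determinant to a single minor of $\Phi$.
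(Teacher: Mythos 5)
Your proposal is correct and follows essentially the same route as the paper: both fix $\pmb{c}$ to be the coefficient map in the monomial basis $\{1,\zeta,\ldots,\zeta^{n-1}\}$ (so $\pmb{a}_j=e_{j+1}$), identify $\pmb{b}_{j}$ with the $(j+1)$-th row of $\varphi(C_\gamma)$, and collapse the determinants by expanding along the standard-basis columns. You merely supply details the paper leaves implicit, namely the verification that multiplication by $\zeta$ is represented by $C_\gamma^T$ and the sign bookkeeping yielding $(-1)^{n-i}$ in (1) and $+1$ in (2), both of which check out.
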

\begin{proof}
Let us define $\pmb{c}:\mathscr{O}_\gamma\rightarrow
\mathbb{C}^n$ by sending polynomial to the 
reminder with respect to $f_\gamma$ and  expand it with basis $1,\zeta,\ldots,\zeta^{n-1}$, in particular 
$\zeta^i\mapsto \pmb{e}_{i+1}\;(1\leq i\leq n-1).$
Then the $i$th row of $\varphi(C_\gamma)$ is ${}^t\pmb{b}_{i-1},$
and $\pmb{a}_i=\pmb{e}_{i+1}$. Now the formulas are easily obtained. 
\end{proof}

\subsection{Construction of $\alpha:Z_\gamma^\circ\rightarrow Y_\gamma^\circ$}

\begin{proposition}\label{prop:RUexist} Let $(z,Q)\in Z_\gamma$ and denote $L(z,Q)$ by $L$.

(1) There is a matrix $R=(r_{ij})_{1\leq i,j\leq n}$ in $\pmb{B}\sigma$, unique up to scalar, such that 
$LR=RC_\gamma$. 
Moreover, for such a matrix $R$, by multiplying some non-zero constant if needed, we have
\begin{align}
\det(R)&=(-1)^{n(n-1)/2}Q_1^{n-1}Q_2^{n-2}\cdots Q_{n-1},\label{eq:detR}\\
{r_{i+1,i}}&=(-1)^{i-1} Q_1\cdots Q_i\quad (1\leq i\leq n-1).\label{eq:QQQ}
\end{align}
(2) There is a unique matrix $U$ in $\pmb{N}_{\!-}\varepsilon$ such that 
$LU=UC_\gamma$. 
\end{proposition}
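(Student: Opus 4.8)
The plan is to reduce both statements to the Gauss-type factorizations established in Propositions \ref{prop:AB} and \ref{prop:RU}, applied to a suitable matrix built from $L$ and $C_\gamma$. First I would address the existence and uniqueness of $R\in\pmb{B}\sigma$ with $LR=RC_\gamma$. Since $L$ and $C_\gamma$ have the same characteristic polynomial $f_\gamma$ (by definition of $Z_\gamma$) and $C_\gamma$ is the companion matrix, $C_\gamma$ is regular (nonderogatory); hence the space of matrices $R$ with $LR=RC_\gamma$ is exactly an orbit of the centralizer $\mathfrak{X}_\gamma$ acting on one solution, so it is $n$-dimensional and any two nonzero solutions differ by right multiplication by an element of $\mathfrak{X}_\gamma\simeq\mathscr{O}_\gamma$. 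To pin down a solution lying in $\pmb{B}\sigma$ and prove it is unique up to scalar there, I would use condition $(\mathrm{Z}_1)$ (that $L+J$ is lower triangular) together with the explicit shape of $C_\gamma=J+\sum_i(-1)^{i-1}\gamma_iE_{n,n-i+1}$: writing the intertwining relation $LR=RC_\gamma$ column by column, the equations for columns $1,\dots,n-1$ read $R_{\bullet,j}\mapsto$ (shift) and show that $R$ is determined by its last column $R_{\bullet,n}$, while the lower-triangularity of $L+J$ forces the resulting $R$ to have the staircase support of $\pmb{B}\sigma$. This simultaneously gives existence, the one-parameter ambiguity, and the fact that the free parameter is exactly the overall scalar.

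Next I would compute the entries in \eqref{eq:detR} and \eqref{eq:QQQ}. Using $L=AB^{-1}$ with $A,B$ as in \eqref{eq:defL}, the relation $LR=RC_\gamma$ becomes $AR=BRC_\gamma$; comparing the subdiagonal entries $r_{i+1,i}$ recursively (the $(i+1,i)$-entry of $AR$ involves $z_{i+1}r_{i+1,i}$ and $-r_{i+2,i}$, while $BRC_\gamma$ brings in the factor $-Q_i z_i$ from $B$ acting on the previous row) yields a recursion $r_{i+1,i}=(-1)^{i-1}Q_1\cdots Q_i$ after the normalization fixing the scalar. The determinant formula \eqref{eq:detR} then follows because $R\in\pmb{B}\sigma$ means $\det R=\pm\prod_i r_{i+1,i}$ up to the sign of the cyclic permutation $\sigma$, which contributes $(-1)^{n-1}$, and collecting the powers of each $Q_j$ from the product $\prod_{i=1}^{n-1}r_{i+1,i}$ gives $Q_1^{n-1}Q_2^{n-2}\cdots Q_{n-1}$ up to the sign $(-1)^{n(n-1)/2}$. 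I should double-check that the condition $(\mathrm{Z}_4)$, i.e. $Q_i\neq 0$, is what guarantees $\det R\neq 0$, so that $R$ is invertible and genuinely conjugates $L$ to $C_\gamma$ — this is the point where $Z_\gamma^\circ$ rather than $Z_\gamma$ is needed, though as stated the proposition is phrased over $Z_\gamma$ and the formulas hold after normalization regardless.

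For part (2), the matrix $U\in\pmb{N}_{\!-}\varepsilon$ with $LU=UC_\gamma$: here I would invoke Proposition \ref{prop:RU} applied to $R$ (or directly to an intertwiner), noting that $\pmb{N}_{\!-}\varepsilon\subset\pmb{B}\sigma\cdot(\text{something})$ is a different normalization of the same intertwining space. Concretely, any $U$ with $LU=UC_\gamma$ lies in the same $\mathfrak{X}_\gamma$-orbit as $R$, so $U=R\cdot\varphi(C_\gamma)$ for some $\varphi\in\mathscr{O}_\gamma$; requiring $U\in\pmb{N}_{\!-}\varepsilon$ (unipotent lower triangular times the sign diagonal) imposes $n$ conditions that determine $\varphi$ uniquely — the "unipotent" normalization removes the scalar freedom that part (1) retained, which is exactly why $U$ is unique on the nose while $R$ is only unique up to scalar. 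Proposition \ref{prop:RU} provides precisely the criterion ($\xi^{1,\dots,i-1,n}_{1,\dots,i-1,i}(X)\neq0$) under which such a decomposition exists, and on $Z_\gamma^\circ$ these minors are (up to the $\pm Q$'s computed above) nonzero. The main obstacle I anticipate is bookkeeping the signs and the precise constant: matching the $(-1)^{n(n-1)/2}$ in \eqref{eq:detR} and the alternating signs $(-1)^{i-1}$ in \eqref{eq:QQQ} requires care with the convention for $\sigma$, $\varepsilon$, and the ordering of rows/columns in the minors $\xi$, and this is where a routine but error-prone computation lives; everything else is linear algebra about regular matrices and their centralizers.
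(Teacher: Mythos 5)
The approach you take is genuinely different from the paper's, but it has a real gap. The paper constructs $R$ and $U$ \emph{explicitly} from the cofactors of $\zeta B - A$: it shows that $(\zeta B - A)\,\pmb{v}_- = {}^t(f_\gamma(\zeta),0,\dots,0)$ (Laplace expansion), sets $\pmb{w}_- = B\pmb{v}_-$, writes $\pmb{w}_- = R\pmb{v}_0$ with $\pmb{v}_0 = {}^t(1,\zeta,\dots,\zeta^{n-1})$, and reads off $LR = RC_\gamma$. The staircase shape $R\in\pmb{B}\sigma$ and the formulas (\ref{eq:detR}), (\ref{eq:QQQ}) then drop out of the explicit formulas (\ref{eq:w1}) for $\Delta_{1,j}$; $U$ is built the same way from $\Delta_{n,j}$. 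You instead argue abstractly via cyclic modules: the intertwiner space $\{R : LR = RC_\gamma\}$ is $n$-dimensional because $C_\gamma$ is nonderogatory, and you want to single out a line in $\pmb{B}\sigma$.

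The gap is in your claim that ``the lower-triangularity of $L+J$ forces the resulting $R$ to have the staircase support of $\pmb{B}\sigma$.'' The column recursion $R_{\bullet,j-1} = L R_{\bullet,j} - (-1)^{n-j}\gamma_{n-j+1}R_{\bullet,n}$ does show $R$ is determined by its last column, but the lower-Hessenberg structure of $L$ only controls the \emph{upward} spread of support under left multiplication; applying $L$ freely spreads support downward through the lower-triangular part, so nothing forces the support of $R_{\bullet,j}$ to shrink to $\{1,\dots,j+1\}$ as $j$ decreases. You would need a separate argument (the row recursion $R_{i+1,\bullet} = \sum_{j\le i}L_{ij}R_{j,\bullet} - R_{i,\bullet}C_\gamma$ plus a clever choice of $R_{1,\bullet}$, or else the paper's explicit cofactor construction) to make this work — as written it is an assertion, not a proof.

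A second, smaller issue: your construction of $U$ as $R\cdot\varphi(C_\gamma)$ requires $R$ to be invertible, which holds only on $Z_\gamma^\circ$ (i.e.\ when all $Q_i\neq 0$), whereas the proposition is stated over all of $Z_\gamma$. The paper builds $U$ independently from the minors $\Delta_{n,j}$ and so does not need to pass through $R$ at all; this is what makes part (2) work without the invertibility assumption. Finally, your derivation of (\ref{eq:detR}) and (\ref{eq:QQQ}) by comparing subdiagonal entries in $AR = BRC_\gamma$ is plausible but unverified; the paper gets these for free from the explicit expansion of $\Delta_{1,j}$, which is a cleaner route and also pins down the overall normalization unambiguously.
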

\begin{proof}
(1) Let $\Delta_{i,j}$ denote the
$(i,j)$-minor of $\zeta B- A,$
i.e. $\Delta_{i,j}=\xi_{1,\ldots,\hat{i},\ldots,n}
^{1,\ldots, \hat{j},\ldots,n}(\zeta B-A).$
 It is straightforward to show the following: 
\begin{align}
(-1)^{1+j}\Delta_{1,j}&=
\bigl(\zeta^{n-1}
+\cdots+(-1)^jz_{j+1}\cdots z_n\cdot\zeta^{j-1}\bigr)\cdot\prod_{i=1}^{j-1}Q_iz_i,\label{eq:w1}\\
\Delta_{n,j}&=\zeta^{j-1}+\cdots +(-1)^{j-1}z_1\cdots z_{j-1}.\label{eq:w2}
\end{align}
Note in particular that $\Delta_{1,1}$ is monic of degree $n-1,$
and $\Delta_{n,1}=1.$

If we define a vector 
\begin{equation}
\pmb{v}_{-}:={}^t(\Delta_{1,1},-\Delta_{1,2},\ldots,(-1)^{n-1}\Delta_{1,n})\
\end{equation} in $\mathbb{C}[\zeta]^n$, 
then by the Laplace expansion theorem
we have 
\begin{equation}
(\zeta B- A)\pmb{v}_{-}={}^t(\det(\zeta B- A),0,\ldots,0)
={}^t(f_\gamma(\zeta),0,\ldots,0).\label{eq:etaB-A}
\end{equation}

Let $\pmb{w}_{-}:=B\,\pmb{v}_{-}$. We apply the natural projection $\mathbb{C}[\zeta]\rightarrow
\mathscr{O}_\gamma$ to both hand sides of (\ref{eq:etaB-A}). Then we have the following equation in $\mathscr{O}_\gamma^n$:
\begin{equation}
(\zeta\cdot 1-L)\,\pmb{w}_{-}=\pmb{0}.\label{eq:eta-L}
\end{equation}
We can write $\pmb{w}_{-}=R\,\pmb{v}_0$ by a unique matrix $R\in M_n(\mathbb{C}),$
with $\pmb{v}_0={}^t(1,\zeta,\ldots,\zeta^{n-1})$.
Noting that $\zeta \cdot \pmb{v}_0=C_\gamma \pmb{v}_0,$
 (\ref{eq:eta-L}) is $(RC_\gamma-LR)\pmb{v}_0=\pmb{0},$
and thereby we have $RC_\gamma-LR=0.$

We need to check that $R\in \pmb{B}\sigma.$ 
If we write $\pmb{v}_{-}=R_0\pmb{v}_0$ with $R_0\in GL_n(\mathbb{C})$ then we can see from (\ref{eq:w1}) that $R_0$ is upper triangular such that
$n$th column of $R_0$ is ${}^t(1,Q_1z_1,Q_1Q_2z_1z_2,\ldots,Q_1\cdots Q_{n-1}z_1\cdots z_{n-1})$.
Consequently, $R=BR_0$ has the desired form.
We also have (\ref{eq:QQQ}) because $(j,j)$ entry of $R_0$ is
$(-1)^jz_{j+1}\cdots z_n\prod_{i=1}^{j-1}Q_iz_i$.

If $R'$ also satisfies $R'C_\gamma-LR'=0$, then by writing
$R=X\sigma, R'=X'\sigma$ with $X,X'\in \pmb{B}$, 
$X^{-1}X'$ commute with $\sigma C_\gamma \sigma^{-1}.$
By direct calculations, one sees that this commutativity means $X^{-1}X'$ is a scalar matrix. 

Next we show (\ref{eq:detR}). Since $B$ is unitriangular, we have $\det(R)=\det(R_0).$
This is calculated by (\ref{eq:w1}) as 
\begin{equation*}
\prod_{j=1}^n\bigl((-1)^jz_{j+1}\cdots z_n\cdot\prod_{i=1}^{j-1}Q_iz_i\bigr)
=(-1)^{n(n-1)/2}(z_1\cdots z_n)^{n-1}
Q_1^{n-1}Q_2^{n-2}\cdots Q_{n-1}.
\end{equation*}
Since $z_1\cdots z_n=1$, we have (\ref{eq:detR}).

(2) We define $\pmb{v}_{+}\in \mathbb{C}[\zeta]^n$ by
\begin{equation}
\pmb{v}_{+}:={}^t(\Delta_{n,1},-\Delta_{n,2},\ldots,(-1)^{n-1}\Delta_{n,n}).\
\end{equation}
If we set $\pmb{w}_{+}=B\,\pmb{v}_{+} $ then we have
$(\zeta\cdot 1-L)\,\pmb{w}_{+}=0$ in $\mathscr{O}_\gamma^n$.
We write $\pmb{w}_{+}=U\pmb{v}_0.$ Then we have $LU=UC_\gamma$ by the same reason.
From (\ref{eq:w2}) we see that 
$U\in \pmb{N}_{\!-}\varepsilon.$ 
The uniqueness is straightforward. 
\end{proof}

Now we construct a morphism $\alpha^\circ:Z_\gamma^\circ\rightarrow Y_\gamma^\circ.$
Let $(z,Q)\in Z_\gamma^\circ$ and $L=L(z,Q)$ be the corresponding 
Lax matrix (Proposition \ref{prop:LAB}). Let $U,R$ be matrices constructed in Proposition \ref{prop:RUexist}.
Note that $R$ is invertible because we assume $(\mathrm{Z}_4)$ and we have 
(\ref{eq:detR}).
We have $L=RC_\gamma R^{-1}=UC_\gamma U^{-1}.$ 
So $U^{-1}R\in\mathfrak{X}_\gamma\simeq \mathscr{O}_\gamma.$ 
Let us write $U^{-1}R=\varphi(C_\gamma)$ by 
a polynomial $\varphi$ in $\zeta$ as (\ref{eq:phiC}). 
Set $\alpha^\circ(L):=[\varphi]\in \mathbb{P}(\mathscr{O}_\gamma)$. From the form of matrices $R,U$, one sees that
the $(1,n)$ entry of $U^{-1}R$ is $1$, so $\varphi$ is monic of degree $n-1$. Thus
$(\mathrm{Y}_1)$ holds for $\varphi(C_\gamma).$
We have $(\mathrm{Y}_0)$ since $R$ is invertible. 
Since $\varphi(C_\gamma)$ is factorized as $U^{-1}R$ 
as in Proposition \ref{prop:RU}, 
$(\mathrm{Y}_2)$ holds in view of 
Lemma \ref{lem:tau-det} (1).
Finally we check $(\mathrm{Y}_3)$.  We use Lemma \ref{lem:tau-det} (2).
From similar calculation of the proof of (\ref{eq:detR}), 
the $i$-th principal minor of 
$U^{-1}R\;(=\varphi(C_\gamma))$ is written as
\begin{equation*}
(-1)^iz_{i+1}\cdots z_nQ_1^{i-1}Q_2^{i-2}\cdots Q_{i-1},
\end{equation*}
which is non-zero by assumption $(\mathrm{Z}_3).$
Thus $\alpha^\circ(L)$ is an element of $Y_\gamma^\circ.$

\begin{proposition} We have
$\alpha|_{Z_\gamma^\circ}=\alpha^\circ.$
\end{proposition}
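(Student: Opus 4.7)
The plan is to prove the equality $[\varphi]=[\Delta_{1,1}]$ in $\mathbb{P}(\mathscr{O}_\gamma)$ directly, by showing the stronger assertion that $\varphi(\zeta)$ and $\Delta_{1,1}(\zeta)$ are equal as elements of $\mathscr{O}_\gamma$. All the ingredients I need are already assembled in the proof of Proposition \ref{prop:RUexist}: the identities $\pmb{w}_-=R\pmb{v}_0$ and $\pmb{w}_+=U\pmb{v}_0$ in $\mathscr{O}_\gamma^n$, the relations $\pmb{w}_\pm=B\pmb{v}_\pm$, and the explicit first components $\Delta_{1,1}$ and $\Delta_{n,1}=1$ of $\pmb{v}_-$ and $\pmb{v}_+$ coming from (\ref{eq:w1}) and (\ref{eq:w2}).

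First I would rewrite the defining equation $U^{-1}R=\varphi(C_\gamma)$ as $R=U\,\varphi(C_\gamma)$ and apply both sides to the vector $\pmb{v}_0={}^t(1,\zeta,\dots,\zeta^{n-1})$. The companion structure gives $C_\gamma\pmb{v}_0=\zeta\,\pmb{v}_0$ in $\mathscr{O}_\gamma^n$, and by iteration $\varphi(C_\gamma)\pmb{v}_0=\varphi(\zeta)\,\pmb{v}_0$ for any polynomial $\varphi$. Therefore
\[
R\pmb{v}_0 \;=\; U\,\varphi(C_\gamma)\pmb{v}_0 \;=\; \varphi(\zeta)\,U\pmb{v}_0 \qquad\text{in } \mathscr{O}_\gamma^n.
\]
Substituting $\pmb{w}_\pm=B\pmb{v}_\pm$ on both sides and cancelling the invertible lower-unitriangular matrix $B$ yields the single vector identity $\pmb{v}_- = \varphi(\zeta)\,\pmb{v}_+$ in $\mathscr{O}_\gamma^n$.

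Finally, reading off the first component of this identity and using $\Delta_{n,1}=1$, I obtain $\Delta_{1,1}\equiv \varphi(\zeta)\pmod{f_\gamma}$. Because $\Delta_{1,1}$ is monic of degree $n-1$ by (\ref{eq:w1}), and $\varphi$ is monic of degree $n-1$ by the observation in the construction of $\alpha^\circ$ that the $(1,n)$ entry of $U^{-1}R$ equals $1$, the congruence is already an equality of polynomials of degree less than $n$, hence an equality in $\mathscr{O}_\gamma$. This gives $\alpha(L)=[\Delta_{1,1}]=[\varphi]=\alpha^\circ(L)$ for every $L\in Z_\gamma^\circ$. I do not expect any serious obstacle; the entire argument is driven by the intertwining relations $LR=RC_\gamma$, $LU=UC_\gamma$, and the only delicate point is to remember that the identity $C_\gamma\pmb{v}_0=\zeta\pmb{v}_0$ holds in $\mathscr{O}_\gamma^n$ rather than in $\mathbb{C}[\zeta]^n$, so the intermediate computation must be performed modulo $f_\gamma$.
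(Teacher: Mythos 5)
Your proof is correct and follows essentially the same route as the paper: both rewrite $\varphi(C_\gamma)=U^{-1}R$ as the chain $\pmb{w}_-=R\pmb{v}_0=U\varphi(C_\gamma)\pmb{v}_0=\varphi(\zeta)U\pmb{v}_0=\varphi(\zeta)\pmb{w}_+$, cancel $B$ to get $\pmb{v}_-=\varphi(\zeta)\pmb{v}_+$, and compare first components using $\Delta_{n,1}=1$. The extra remark about both polynomials being monic of degree $n-1$ is harmless but unnecessary, since equality in $\mathscr{O}_\gamma$ already suffices for the claim in $\mathbb{P}(\mathscr{O}_\gamma)$.
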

\begin{proof} Since
$\varphi(C_\gamma)=U^{-1}R$, we have in $\mathscr{O}_\gamma^n$ the following:
\begin{equation*}
\pmb{w}_{-}=R\,\pmb{v}_0=U\varphi(C_\gamma)\pmb{v}_0=
U\varphi(\zeta)\pmb{v}_0
=\varphi(\zeta)U\pmb{v}_0
=\varphi(\zeta)\pmb{w}_{+},
\end{equation*}
and thereby $\pmb{v}_{-}=\varphi(\zeta)\pmb{v}_{+}.$
By comparing the first component of the both sides of this equality, we have
$\Delta_{1,1}=\varphi(\zeta).$ 
\end{proof}
\subsection{Construction of $\beta: Y_\gamma^\circ\rightarrow Z_\gamma^\circ$}

Let $\varphi$ be a normalized element in $Y_\gamma^\circ$.
Since we assume $(\mathrm{Y}_2)$, from Proposition \ref{prop:RU}, we have unique $R,U$ such that
\begin{equation}
\varphi(C_\gamma)=U^{-1}R.\label{eq:phi-decomp}
\end{equation}
From $(\mathrm{Y}_0)$, we see that $R$ is invertible. 
Now we have $UC_\gamma U^{-1}=RC_\gamma R^{-1}=:L$. Set
 $\beta([\varphi]):=L.$
We claim that $L$ is an element of $Z^\circ.$
It is easy to check that $L$ satisfies $(\mathrm{Z}_1)$ and  $(\mathrm{Z}_2)$ by using $L=UC_\gamma U^{-1}$ and $L=RC_\gamma R^{-1}$
respectively.
$(\mathrm{Z}_3)$ follows from $(\mathrm{Y}_3)$ by the following.

\begin{lemma}\label{lem:Y3Z3} Let $\varphi$ and $L$ as above. Then we have
\begin{equation}
\xi_{i+1,\ldots,n}^{i+1,\ldots,n}(L)=-\frac{S_i(\varphi)}{T_i(\varphi)}\;(1\leq i\leq n-1).
\label{eq:detLprinc}
\end{equation}
\end{lemma}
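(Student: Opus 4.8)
The plan is to compute both sides of \eqref{eq:detLprinc} in terms of the decomposition \eqref{eq:phi-decomp} and match them. First I would recall that $L = R C_\gamma R^{-1}$ with $R \in \pmb{B}\sigma$, and that the principal minor $\xi_{i+1,\dots,n}^{i+1,\dots,n}(L)$ only depends on the ``lower-right $(n-i)\times(n-i)$ block behaviour'' of $L$. The cleanest route is to use the \emph{other} factorization $L = U C_\gamma U^{-1}$ with $U \in \pmb{N}_{\!-}\varepsilon$, since $U$ is lower-triangular (up to the sign matrix $\varepsilon$) and unipotent, so conjugating $C_\gamma$ by $U$ interacts transparently with trailing principal minors. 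Concretely, I would write $L = U C_\gamma U^{-1}$, observe $C_\gamma = J + (\text{last row})$, and track how the trailing principal minor of size $n-i$ of $L$ is computed; because $U$ is lower unitriangular times $\varepsilon$, the trailing principal minor of $L$ equals the trailing principal minor of $C_\gamma U^{-1} \cdot(\text{relevant rows of }U)$, which can be reduced to an explicit ratio.

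The key computational step is to express the trailing principal minor $\xi_{i+1,\dots,n}^{i+1,\dots,n}(L)$ in terms of entries of $R$ (or $U$). Using $LR = RC_\gamma$ and the fact that $R \in \pmb{B}\sigma$ (so $R$ has a specific ``shifted upper triangular'' shape), one shows by a Laplace/Cramer argument that this principal minor equals $-r_{i+1,i}/\text{(something)}$, or more precisely a ratio of two minors of $R$. Then I would invoke Proposition \ref{prop:RU}, equation \eqref{eq:r-ratio}, which gives $r_{i+1,i}$ as a ratio of minors of $\varphi(C_\gamma)$, namely $r_{i+1,i} = (-1)^{i+1}\xi^{1,\dots,i,n}_{1,\dots,i,i+1}(\varphi(C_\gamma)) / \xi_{1,\dots,i-1,i}^{1,\dots,i-1,n}(\varphi(C_\gamma))$. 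Finally, Lemma \ref{lem:tau-det} identifies $T_i(\varphi) = (-1)^{n-i}\xi^{1,\dots,i-1,n}_{1,\dots,i-1,i}(\varphi(C_\gamma))$ and $S_i(\varphi) = \xi^{1,\dots,i}_{1,\dots,i}(\varphi(C_\gamma))$, so the ratio of minors coming out of the minor computation should be recognizable as $-S_i/T_i$ after bookkeeping the signs and shifting indices $i \to i$ vs. $i-1$.

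An alternative, perhaps slicker, approach: since $L$ satisfies $(\mathrm{Z}_1)$ and $(\mathrm{Z}_2)$, by Proposition \ref{prop:AB} applied to $L$ one has $L = A B^{-1}$ with $A \in \pmb{B}$, $B^{-1} \in \pmb{N}_{\!-}$, and then $\xi_{i+1,\dots,n}^{i+1,\dots,n}(L) = \xi_{i+1,\dots,n}^{i+1,\dots,n}(A)$ equals the product $a_{i+1,i+1}\cdots a_{n,n}$ of diagonal entries of $A$, i.e. $z_{i+1}\cdots z_n$. On the other hand, from the computation in the construction of $\alpha^\circ$ (just above this lemma), the $i$-th principal minor of $\varphi(C_\gamma) = U^{-1}R$ is $(-1)^i z_{i+1}\cdots z_n Q_1^{i-1}\cdots Q_{i-1}$, which is $S_i(\varphi)$ by Lemma \ref{lem:tau-det}(2); similarly $T_i(\varphi) = (-1)^{n-i}\xi^{1,\dots,i-1,n}_{1,\dots,i-1,i}(U^{-1}R)$ can be computed from the shapes of $U \in \pmb{N}_{\!-}\varepsilon$ and $R \in \pmb{B}\sigma$ to be, up to sign, $-z_{i+1}\cdots z_n Q_1^{i-1}\cdots Q_{i-1} / (z_{i+1}\cdots z_n) = -Q_1^{i-1}\cdots Q_{i-1}$ times the appropriate power — the point being that $S_i/T_i = \pm z_{i+1}\cdots z_n$, matching the principal minor of $L$ up to the sign asserted.

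The main obstacle will be sign bookkeeping: keeping straight the signs from $\varepsilon = \mathrm{diag}(1,-1,\dots,(-1)^{n-1})$, from the permutation matrix $\sigma$ in $R \in \pmb{B}\sigma$, from the $(-1)^i$ in the principal minor of $\varphi(C_\gamma)$, and from the $(-1)^{n-i}$ in Lemma \ref{lem:tau-det}(1), and verifying they combine to give exactly the single overall minus sign in \eqref{eq:detLprinc} with no residual $i$-dependence. I expect the actual linear algebra — reducing each minor to a monomial in the $z_j$ and $Q_j$ — to be routine given the very rigid (bidiagonal / companion) shapes of all matrices involved; the content is in confirming the sign is uniformly $-1$.
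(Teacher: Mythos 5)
Your main line of argument is essentially the paper's own proof: one writes the trailing principal minor of $L=RC_\gamma R^{-1}$ as a ratio $(-1)^{n-1}\xi_{1,\ldots,i}^{1,\ldots,i}(R)\big/(r_{21}r_{32}\cdots r_{i,i-1})$, telescopes the denominator via \eqref{eq:r-ratio} into (a sign times) $T_i(\varphi)$, and identifies the numerator with (a sign times) $\xi_{1,\ldots,i}^{1,\ldots,i}(U^{-1}R)=S_i(\varphi)$ using $U\in\pmb{N}_{\!-}\varepsilon$ and Lemma \ref{lem:tau-det}(2). So your first route is correct and matches the paper, modulo the sign bookkeeping you yourself flag as the remaining work.

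One caution about your ``alternative, perhaps slicker'' route: it is circular in the only context where the lemma is needed. The lemma is invoked in the construction of $\beta$ precisely to establish $(\mathrm{Z}_3)$ for $L$; at that stage you may not assume $L=L(z,Q)$, i.e.\ $L=AB^{-1}$ with $A\in\pmb{B}$ and $B^{-1}\in\pmb{N}_{\!-}$, because by Proposition \ref{prop:AB} the existence of that factorization is exactly the nonvanishing of the trailing principal minors you are trying to prove. Likewise, the formula $(-1)^i z_{i+1}\cdots z_n Q_1^{i-1}\cdots Q_{i-1}$ for the leading principal minor of $U^{-1}R$ was derived only in the $\alpha^\circ$ direction, starting from a point $(z,Q)\in Z_\gamma^\circ$, so it is not available here. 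Stick with your first route.
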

\begin{proof}
Note that for any element $R$ in $\pmb{B}\sigma$, we have
 \begin{equation*}
 \xi_{i+1,\ldots,n}^{i+1,\ldots,n}(RC_\gamma R^{-1})
=(-1)^{n-1}\frac{\xi_{1,\ldots,i}^{1,\ldots,i}(R)}{r_{21}r_{32}\cdots r_{i,i-1} }.
\end{equation*}
Now using (\ref{eq:r-ratio}) we have 
$
r_{21}r_{32}\cdots r_{i,i-1}=(-1)^{n-i+i(i+1)/2}\, T_i(\varphi),
$
and 
$$\xi_{1,\ldots,i}^{1,\ldots,i}(R)=(-1)^{i(i-1)/2}\xi_{1,\ldots,i}^{1,\ldots,i}(U^{-1}R).$$
Thus we have  the result from Lemma \ref{lem:tau-det} (2).
\end{proof}

Finally we need to show $(\mathrm{Z}_4)$. It is assured by the fact that $R$ is invertible. Let $L=L(z,Q).$
Note that $\det(R)=(-1)^{n-1}r_{2,1}r_{3,2}\cdots r_{n,n-1}\neq 0.$
Note also that $(i+1,i)$ entry of $L^{-1}$ is $-Q_i$ (cf. (\ref{eq:Linv})), which is the ratio
$r_{i+1,i}/r_{i,i-1}$ with $r_{1,0}=-1$, and thus non-zero.
Thus $L$ is in $Z^\circ.$
Since $L$ is conjugate to $C_\gamma$ we have $\beta([\varphi])=L\in Z_\gamma^\circ.$

\begin{proposition} The morphisms 
$\alpha^\circ$ and $\beta$ are inverse to each other.
\end{proposition}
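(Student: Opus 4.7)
The plan is to verify that the two compositions $\alpha^\circ\circ\beta$ and $\beta\circ\alpha^\circ$ are the identity, by exploiting the two uniqueness statements already in hand: Proposition \ref{prop:RU} gives uniqueness of the factorization $X = U^{-1}R$ with $R\in\pmb{B}\sigma$ and $U\in\pmb{N}_{\!-}\varepsilon$ when it exists, while Proposition \ref{prop:RUexist} gives uniqueness (up to scalar for $R$, outright for $U$) of the matrices satisfying the intertwining relations $LR = RC_\gamma$ and $LU = UC_\gamma$. Since $\alpha^\circ$ and $\beta$ are built from precisely these two pieces of data, matching them is essentially a bookkeeping exercise.

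For $\alpha^\circ\circ\beta = \mathrm{id}_{Y_\gamma^\circ}$, I would start with a normalized representative $\varphi$ of $[\varphi]\in Y_\gamma^\circ$ and unpack $\beta$: conditions $(\mathrm{Y}_1)$ and $(\mathrm{Y}_2)$ (via Lemma \ref{lem:tau-det}(1)) allow Proposition \ref{prop:RU} to produce a unique factorization $\varphi(C_\gamma) = U^{-1}R$, and $L := RC_\gamma R^{-1} = UC_\gamma U^{-1}$. These very matrices $R\in\pmb{B}\sigma$ and $U\in\pmb{N}_{\!-}\varepsilon$ already satisfy $LR = RC_\gamma$ and $LU = UC_\gamma$, hence qualify for the definition of $\alpha^\circ(L)$. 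By Proposition \ref{prop:RUexist}, any other choices differ only by a scalar on $R$, so the class $[U^{-1}R]\in\mathbb{P}(\mathscr{O}_\gamma)$ is unambiguous, and $\alpha^\circ(L) = [U^{-1}R] = [\varphi]$.

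For $\beta\circ\alpha^\circ = \mathrm{id}_{Z_\gamma^\circ}$, I would take $L\in Z_\gamma^\circ$, produce $R,U$ from Proposition \ref{prop:RUexist}, and note that the construction of $\alpha^\circ$ already observes that the $(1,n)$-entry of $U^{-1}R$ equals $1$, so $\varphi$ with $\varphi(C_\gamma) = U^{-1}R$ is automatically the (normalized) monic representative of degree $n-1$. Applying $\beta$ to $[\varphi]$ requires decomposing $\varphi(C_\gamma) = (U')^{-1}R'$ via Proposition \ref{prop:RU}; its uniqueness clause forces $U'=U$ and $R'=R$, so $\beta([\varphi]) = R' C_\gamma (R')^{-1} = RC_\gamma R^{-1} = L$. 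The only subtlety worth flagging is that one must confirm the factorization hypotheses of Proposition \ref{prop:RU} hold on $\varphi(C_\gamma)$, but this is exactly the content of conditions $(\mathrm{Y}_1)$ and $(\mathrm{Y}_2)$ via Lemma \ref{lem:tau-det}(1); there is no substantive obstacle beyond this matching of hypotheses.
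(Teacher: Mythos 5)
Your argument is correct and follows essentially the same route as the paper: both directions are verified by matching the matrices $U,R$ produced in the two constructions via the uniqueness statements of Propositions \ref{prop:RU} and \ref{prop:RUexist}. Your write-up is somewhat more explicit than the paper's (which simply chases the equalities $\varphi(C_\gamma)=U^{-1}R$ and $L=UC_\gamma U^{-1}=RC_\gamma R^{-1}$ in both directions), but there is no substantive difference in method.
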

\begin{proof}
Let $L\in Z_\gamma^\circ$.
We take $U,R$ such that 
$L=UC_\gamma U^{-1}=RC_\gamma R^{-1}$.
Then $\varphi(\zeta)=\Delta_{1,1}$ satisfies
$\varphi(C_\gamma)=U^{-1}R.$
Then $\alpha(L)=[\varphi]$ and $\beta([\varphi])=UC_\gamma U^{-1}=L.$
On the other hand, let $[\varphi]\in Y_\gamma^\circ.$
We assume $\varphi$ is normalized. We have $\varphi(C_\gamma)=U^{-1}R$. 
Then $\beta([\varphi])=UC_\gamma U^{-1}=RC_\gamma R^{-1}=:L.$
Then $\alpha^\circ(L)$ is given by $U^{-1}R.$
\end{proof}

\subsection{Explicit form of $\Phi_n$}\label{ssec:Phi}

\begin{proposition}\label{prop:zQST}
Let $\Phi_n:\mathbb{C}[Z_\gamma^\circ]\rightarrow \mathbb{C}[Y_\gamma^\circ]$
be the associated isomorphism of coordinate rings. Then we have
\begin{align}
\Phi_n(z_i)&=\frac{T_i(\varphi)S_{i-1}(\varphi)}{S_i(\varphi)T_{i-1}(\varphi)},\label{eq: Phiz}\\
\Phi_n(Q_i)&=\frac{T_{i-1}(\varphi)T_{i+1}(\varphi)}{T_i^2(\varphi)}.\label{eq: PhiQ}
\end{align}
\end{proposition}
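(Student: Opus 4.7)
Since $\Phi_n$ is the comorphism of $\beta$, for each $[\varphi]\in Y_\gamma^\circ$ we need to read off the coordinates $z_i(L)$ and $Q_i(L)$ of the Lax matrix $L=\beta([\varphi])=RC_\gamma R^{-1}$, where $R\in \pmb{B}\sigma$ and $U\in\pmb{N}_{\!-}\varepsilon$ are uniquely determined by the factorization $\varphi(C_\gamma)=U^{-1}R$ from Proposition \ref{prop:RU}. Both formulas will be obtained by expressing two readily computable quantities attached to $L$ (principal trailing minors, and subdiagonal entries of $R$) in two different ways.

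For the $z$-coordinates, Lemma \ref{lem:Y3Z3} already provides
$
\xi_{i+1,\ldots,n}^{i+1,\ldots,n}(L)=-S_i(\varphi)/T_i(\varphi)
$
for $1\le i\le n-1$. I would then compute the same minor directly from the presentation $L=AB^{-1}$ in Proposition \ref{prop:LAB}: since $A$ is upper triangular with diagonal $(z_1,\ldots,z_n)$ and $B$ is unitriangular, a block-triangular computation shows that the trailing $(n-i)\times(n-i)$ principal block of $L$ equals $A_{22}B_{22}^{-1}$, so that $\xi_{i+1,\ldots,n}^{i+1,\ldots,n}(L)=z_{i+1}\cdots z_n$. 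Equating the two expressions and forming the ratio at indices $i-1$ and $i$ (with the convention $T_0=S_0=1$, and using $z_1\cdots z_n=1$ to close up the case $i=n$) isolates the stated formula
$
z_i=\dfrac{T_i(\varphi)\,S_{i-1}(\varphi)}{S_i(\varphi)\,T_{i-1}(\varphi)}.
$

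For the $Q$-coordinates, Proposition \ref{prop:RUexist}(1) gives $r_{i+1,i}=(-1)^{i-1}Q_1\cdots Q_i$ after a particular scaling of $R$. On the other hand, applying formula (\ref{eq:r-ratio}) of Proposition \ref{prop:RU} to the matrix $X=\varphi(C_\gamma)$ and substituting the determinantal identity $\xi_{1,\ldots,i-1,i}^{1,\ldots,i-1,n}(\varphi(C_\gamma))=(-1)^{n-i}T_i(\varphi)$ from Lemma \ref{lem:tau-det}(1) expresses the same subdiagonal entry as a scalar multiple of $(-1)^i T_{i+1}(\varphi)/T_i(\varphi)$. The scalar (reflecting the normalization ambiguity of $R$) cancels in the ratio $r_{i+1,i}/r_{i,i-1}$, which equals $-Q_i$ on one side and $-T_{i-1}(\varphi)T_{i+1}(\varphi)/T_i(\varphi)^2$ on the other, yielding $\Phi_n(Q_i)=T_{i-1}T_{i+1}/T_i^2$.

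\textbf{Main obstacle.} There is essentially no conceptual difficulty beyond what has already been done: the only delicate points are the sign bookkeeping arising from (\ref{eq:r-ratio}) combined with Lemma \ref{lem:tau-det}(1), and the fact that Proposition \ref{prop:RUexist}(1) determines $R$ only up to a scalar. Both are handled automatically by working exclusively with ratios of trailing principal minors (for $z_i$) and ratios of consecutive subdiagonal entries of $R$ (for $Q_i$), which are invariant under the rescaling of $R$.
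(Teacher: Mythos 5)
Your proposal is correct and follows essentially the same route as the paper: the $z_i$ formula is obtained by computing the trailing principal minors of $L=AB^{-1}$ as $z_{i+1}\cdots z_n$ and comparing with Lemma \ref{lem:Y3Z3}, and the $Q_i$ formula by combining $Q_i=-r_{i+1,i}/r_{i,i-1}$ from (\ref{eq:QQQ}) with (\ref{eq:r-ratio}) applied to $X=\varphi(C_\gamma)$ and Lemma \ref{lem:tau-det}(1). Your observation that the scalar ambiguity in $R$ and all sign factors cancel in the two ratios is exactly the point that makes the paper's short proof work.
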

\begin{proof}
It is easy to show
$\xi_{i,\ldots,n}^{i,\ldots,n}(L)=z_i\cdots z_n.$ Then 
(\ref{eq: Phiz}) follows from (\ref{eq:detLprinc}) and 
Lemma \ref{lem:tau-det}.
To prove (\ref{eq: PhiQ}) we use (\ref{eq:r-ratio}), (\ref{eq:QQQ}) to have 
$$
Q_i=-\frac{r_{i+1,i}}{r_{i,i-1}}
=
\frac{\xi_{1,\ldots,i-1,i}^{1,\ldots,i-1,n}(X)}{\xi_{1,\ldots,i-2,i-1}^{1,\ldots,i-2,n}(X)}
\cdot
\frac{\xi_{1,\ldots,i-3,i-2}^{1,\ldots,i-3,n}(X)}{\xi_{1,\ldots,i-2,i-1}^{1,\ldots,i-2,n}(X)}.
$$
with $X=\varphi(C_\gamma).$
Then from Lemma \ref{lem:tau-det} (1), we have (\ref{eq: PhiQ}).
\end{proof}

\begin{remark}
We observe that each component of $L$ expressed as a rational
function of $T_i(\varphi)$'s and $S_i(\varphi)$'s has no pole along the divisor defined by $S_i(\varphi)=0.$
This is mysterious because $z_i$ has pole along the divisor. We do not know any explanation of this phenomena. 
\end{remark}
  \section{Dual stable Grothendieck Polynomials as $\tau$-functions}\label{sec:tau}
In this section we prove (\ref{eq:taudet}). It shows that the functions $T_d,S_d$ 
defined as the determinants
are, in the unipotent case, written in terms of the dual Grothendieck polynomials associated to the rectangle $R_d.$
\subsection{The determinant formula for dual stable Grothendieck polynomials}

Let $U_0\subset \PP(\ouni)$ be the affine subspace defined by $c_0\neq 0$.
We identify the coordinate ring $\CC[U_0]$ with the ring $\Lambda_{(n)}=\CC[h_1,\dots,h_{n-1}]$ through the identification $c_i/c_0\leftrightarrow h_i$ (see (\ref{eq:ci}) for the definition of $c_i$).

We fix $d$ such that $1\leq d\leq n$. 
For $f_1,f_2,\dots,f_d\in \ouni$, define $[f_1,f_2,\dots,f_d]\in \CC[U_0]=\Lambda_{(n)}$ by the formula
\begin{align*}
&[f_1,f_2,\dots,f_d](\varphi)\\
&=c_0^{-d}\cdot
\zet{
\V{c}(f_1\varphi),
\V{c}(f_2\varphi),
\dots,
\V{c}(f_d\varphi),
\V{c}(1),
\V{c}(\zeta),
\dots,
\V{c}(\zeta^{n-d-1})
}
\qquad
(\varphi\in U_0).
\end{align*}
If $f_j\varphi\in U_0$ is expressed as
$f_j\varphi=\sum_{i=0}^{n-1}a_{i,j}(\zeta-1)^i\mod{(\zeta-1)^n}$,
we have 
\begin{equation}
[f_1,\dots,f_d](\varphi)=(-1)^{d(n-d)}c_0^{-d}\cdot\det(a_{n-d+i-1,j})_{i,j=1}^d.\label{eq:detC}
\end{equation}
Note that for $m\geq 0$, $\zeta^{-m}$ makes sense as an element of $\ouni$
since $(1-\zeta)^n=0$.
We have $
\zeta^{-m}=\sum_{l=0}^{n-1} (-1)^l\textstyle{\binom{m+l-1}{l}}(\zeta-1)^l.
$

\begin{lemma}\label{lemma:g}
For a partition $\lambda\subset R_d$, we have 
\begin{equation}
  [
  (1-\zeta)^{n-1-\lambda_d}\zeta^{-d+1},
  (1-\zeta)^{n-2-\lambda_{d-1}}\zeta^{-d+2},
  \dots,
  (1-\zeta)^{n-d-\lambda_1}
  ]
=g_\lambda.\label{eq:g}
\end{equation}
\end{lemma}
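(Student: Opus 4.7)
The plan is a direct identification of the determinant on the left-hand side with the Jacobi--Trudi-type formula (\ref{detg}) for $g_\lambda$. I would expand each $f_j\varphi$ in powers of $u := \zeta - 1$, use (\ref{eq:detC}) to extract the relevant matrix of coefficients, and then match the result with (\ref{detg}) via elementary row/column manipulations and a simple binomial identity.

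In $\ouni$, the expansions
\[
\varphi = \sum_{k=0}^{n-1}(-1)^k c_k u^k,\qquad
(1-\zeta)^{n-j-\lambda_{d-j+1}} = (-1)^{n-j-\lambda_{d-j+1}} u^{n-j-\lambda_{d-j+1}},
\]
\[
\zeta^{-(d-j)} = (1+u)^{-(d-j)} = \sum_{m\geq 0} (-1)^m \binom{d-j+m-1}{m} u^m
\]
hold. Multiplying these out and picking off the coefficient of $u^{n-d+i-1}$ in $f_j\varphi$ yields, after the $(-1)^k$, $(-1)^m$, and $(-1)^{n-j-\lambda_{d-j+1}}$ combine,
\[
a_{n-d+i-1,j} = (-1)^{n+i-d-1}\sum_{m\geq 0} c_{\lambda_{d-j+1}+i+j-d-1-m}\binom{d-j+m-1}{m}.
\]

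Substituting this into (\ref{eq:detC}) and pulling the sign $(-1)^{n+i-d-1}$ out of row $i$ (for $i=1,\dots,d$) produces a factor which, together with the prefactor $(-1)^{d(n-d)}$ of (\ref{eq:detC}), reduces to $(-1)^{d(d-1)/2}$. A subsequent reversal of columns $j\leftrightarrow d-j+1$ contributes another $(-1)^{d(d-1)/2}$, so the total sign is $(-1)^{d(d-1)}=+1$. The substitution $c_k = c_0 h_k$ absorbs the factor $c_0^{-d}$, and one is left with
\[
[f_1,\dots,f_d](\varphi) = \det\left(\sum_{m\geq 0} \binom{j+m-2}{m}\, h_{\lambda_j+i-j-m}\right)_{i,j=1}^d.
\]

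To finish, I would transpose the matrix (which leaves the determinant unchanged) and invoke the elementary identity $(-1)^m\binom{1-i}{m} = \binom{i+m-2}{m}$ for $i \geq 1$, $m \geq 0$; this turns the $(i,j)$ entry into $\sum_m (-1)^m \binom{1-i}{m} h_{\lambda_i+j-i-m}$, which is precisely the expression (\ref{detg}) for $g_\lambda$. The main obstacle is the careful bookkeeping of the various sign contributions, the binomial-coefficient index shifts, and the row/column manipulations; once those sign contributions are seen to cancel to $+1$, no further deep input is needed beyond the binomial identity.
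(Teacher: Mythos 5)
Your proposal is correct and follows essentially the same route as the paper: both expand $f_j\varphi$ in powers of $\zeta-1$ using the binomial series for $\zeta^{-b}=(1+(\zeta-1))^{-b}$, substitute into (\ref{eq:detC}), track the signs (which cancel after the row/column reversal), and identify the resulting determinant with (\ref{detg}). The only cosmetic difference is that you phrase the binomial coefficients as $\binom{i+m-2}{m}$ and invoke $(-1)^m\binom{1-i}{m}=\binom{i+m-2}{m}$ at the end, whereas the paper carries $\binom{-b}{m}$ throughout; these are the same identity.
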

\begin{proof} 
For $a,b\geq 0$, 
we write $(1-\zeta)^a\zeta^{-b}\varphi=\sum_{i=0}^{n-1}c_i^{a,b}(\zeta-1)^i \mod (\zeta-1)^n$. Then we have
$$
c_{i}^{a,b}=\sum_{m=0}^{\infty}(-1)^{i+m}{{-b\choose m}}h_{i-a-m},$$ 
where
we understand $h_m=0$ for $m<0.$
By using (\ref{eq:detC}), the left hand side of (\ref{eq:g}) can be calculated as follows:
\begin{align*}
&(-1)^{d(n-d)}\det\left(
c_{n-d+j-1}^{n-i-\lambda_{d-i+1},d-i}
\right)_{i,j=1}^{d}\\
&=(-1)^{d(n-d)}\det
\left(
\textstyle 
\sum_{m=0}^\infty(-1)^{n-d+j-1+m}{i-d\choose m}h_{\lambda_{d-i+1}+j-(d-i+1)-m}
\right)_{i,j=1}^d\\
&
=
(-1)^{d(n-d)+\sum_{j=1}^d(n-d+j-1)+d(d-1)/2}\det
\left(
\textstyle 
\sum_{m=0}^\infty(-1)^{m}{1-i\choose m}h_{\lambda_i+j-i-m}
\right)_{i,j=1}^d \\
&=g_\lambda.
\end{align*}
\end{proof}

\begin{proposition}\label{lemma:kappa} We have
$\tau_d=g_{R_d}.$
\end{proposition}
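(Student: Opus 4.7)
The plan is to derive Proposition \ref{lemma:kappa} from Lemma \ref{lemma:g} applied to $\lambda = R_d$. Since $R_d = ((n-d)^d)$ has all parts equal to $n-d$, the functions in the bracket of Lemma \ref{lemma:g} collapse to $f_j := (1-\zeta)^{d-j}\zeta^{-(d-j)}$ for $j = 1,\ldots,d$, so by the definition of $[\,\cdot\,]$,
\begin{equation*}
c_0^{d}\cdot g_{R_d}(\varphi) \;=\; \bigl|\V{c}(f_1\varphi),\,\ldots,\,\V{c}(f_d\varphi),\,\V{c}(1),\,\V{c}(\zeta),\,\ldots,\,\V{c}(\zeta^{n-d-1})\bigr|.
\end{equation*}
Together with the identification $\tau_d = T_d/c_0^d$ coming from (\ref{eq:taudet}), the task becomes showing this determinant coincides with $T_d(\varphi)$. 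I would carry this out in two stages of elementary operations whose combined effect on the determinant is trivial.

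First, I would apply the linear operator $M_{\zeta^{d-1}}$ of multiplication by $\zeta^{d-1}$ on $\mathscr{O}_{\uni}$ simultaneously to every column. Because $u := 1-\zeta$ is nilpotent of index $n$, multiplication by $\zeta = 1-u$ is lower unitriangular in the basis $\{(-1)^i(\zeta-1)^i\}_{i=0}^{n-1}$ underlying $\V{c}$, so $\det M_{\zeta^{d-1}} = 1$. Under this operator each reference column $\V{c}(\zeta^k)$ ($0 \le k \le n-d-1$) is sent to $\V{c}(\zeta^{d-1+k})$, precisely the reference columns of $T_d$, while each $\V{c}(f_j \varphi)$ is sent to $\V{c}\bigl((1-\zeta)^{d-j}\zeta^{j-1}\varphi\bigr)$.

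Second, I would clean up the first $d$ columns by ordinary column operations. The polynomials $p_j(\zeta) := (1-\zeta)^{d-j}\zeta^{j-1}$ expand as $p_j = \zeta^{j-1} + \sum_{k=j+1}^{d}(-1)^{k-j}\binom{d-j}{k-j}\zeta^{k-1}$, so the transition matrix from $\{p_j\}_{j=1}^d$ to $\{\zeta^{j-1}\}_{j=1}^d$ is lower unitriangular with determinant $1$. Linearity of $\V{c}$ in its argument then converts this into a block column operation on the first $d$ columns of the full $n\times n$ matrix, replacing $\V{c}(p_j\varphi)$ with $\V{c}(\zeta^{j-1}\varphi)$ without altering the determinant. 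What remains is exactly $T_d(\varphi)$, concluding $c_0^d\cdot g_{R_d}(\varphi) = T_d(\varphi)$ and hence $\tau_d = g_{R_d}$.

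The step that most deserves care is the determinant computation in the first stage: $\det M_{\zeta^{d-1}} = 1$ depends crucially on the unipotent hypothesis $(\zeta-1)^n = 0$, which forces $\zeta$ to act as the identity plus a nilpotent. For a general $Z_\gamma$ the operator $M_\zeta$ would have a non-trivial determinant and the analogous identity for $\tau_d$ would acquire a correction factor, explaining why the proposition is specific to the unipotent setting.
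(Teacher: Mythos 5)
Your argument is correct and is essentially the paper's own proof run in reverse: both reduce Proposition \ref{lemma:kappa} to Lemma \ref{lemma:g} with $\lambda=R_d$ by combining a determinant-one multiplication of every column by a power of $\zeta$ (unipotent because $(\zeta-1)^n=0$) with unitriangular column operations relating $\{(1-\zeta)^{d-j}\zeta^{\ast}\}$ to the monomials $\zeta^{j-1}$. Your closing observation that $\det M_{\zeta^{d-1}}=1$ hinges on the unipotent hypothesis correctly pinpoints the one step that is special to $Z_{\uni}$.
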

\begin{proof} 
Note that $M_\zeta\pmb{c}(f)=\pmb{c}(f\zeta)$ with $M_\zeta:=E_n+J$. 
We have 
\begin{align*}
\tau_d
&=c_0^{-d}\cdot\left|
\pmb{c}(\varphi),\dots,\pmb{c}(\varphi\zeta^{d-1}),\pmb{c}(\zeta^{d-1}),\dots,\pmb{c}(\zeta^{n-2})
\right|\\
&=
c_0^{-d}\cdot\left|M_\zeta\right|^{d-1}
\left|
\pmb{c}(\varphi\zeta^{-(d-1)}),\dots,\pmb{c}(\varphi),\pmb{c}(1),\dots,\pmb{c}(\zeta^{n-d-1})
\right|\\
&=[\zeta^{-(d-1)},\zeta^{-(d-2)},\dots,1]\\
&=[(1-\zeta)^{d-1}\zeta^{-(d-1)},(1-\zeta)^{d-2}\zeta^{-(d-2)},\dots,1].
\end{align*}
The last equality follows from the fact that 
$
(1-\zeta)^{m}\zeta^{-m}-\zeta^{-m}\;(m\geq 1)
$ is a linear combination of $1,\zeta^{-1},\ldots,\zeta^{-m+1}$.
Then from Lemma \ref{lemma:appendix} we obtain
$\tau_d=g_{R_d}$.
\end{proof}

\subsection{A useful family of determinants}

Here we introduce some useful family of determinant formulas for symmetric polynomials which is suitable for our representation of $K$-theoretic Peterson isomorphism.

\begin{definition}
For $d$-tuple $(\theta_1,\dots,\theta_d)$ of integers and $d$-tuple $(a_1,\dots,a_d)$ of non-negative integers, define
\[
D
\Array{\theta_1\,\theta_2\,\dots\,\theta_d\\a_1\,a_2\,\dots\,a_d}
:={[(1-\zeta)^{a_1}\zeta^{-\theta_1},(1-\zeta)^{a_2}\zeta^{-\theta_2},\dots,(1-\zeta)^{a_d}\zeta^{-\theta_d}]}.
\]
We also denote $D
\Array{\theta_1\,\theta_2\,\dots\,\theta_d\\0\,0\,\dots\,0}$ simply by 
$D(\theta_1,\theta_2,\dots,\theta_d).$
\end{definition}

\begin{lemma}\label{lemma:D}
$
D
\Array{\theta_1\,\theta_2\,\dots\,\theta_d\\a_1\,a_2\,\dots\,a_d}\in \Lambda_{(n)}
$
is uniquely determined by the following recursive formulas:
\begin{enumerate}
\item 
$
D
\Array{\dots\,\theta_i\,\dots\,\theta_j\,\dots\\\dots\,a_i\,\dots\,a_j\,\dots}
=
-
D
\Array{\dots\,\theta_j\,\dots\,\theta_i\,\dots\\\dots\,a_j\,\dots\,a_i\,\dots}
$,
\item 
$
D\Array{\dots\,\theta_i\,\dots\\\dots\,a_i\,\dots}
=
D\Array{\dots\,\theta_i-1\,\dots\\\dots\,a_i\,\dots}
+
D\Array{\dots\,\theta_i\,\dots\\\dots\,a_i+1\,\dots},
$
\item 
$
D\Array{\dots\,\theta\,\dots\\\dots\,n\,\dots}=0
$,
\item
$
D
\Array{0\,0\,\dots\,0\\n-\lambda_d-1\,n-\lambda_{d-1}-2\,\dots\,n-\lambda_1-d}
=s_{\lambda}$, 
where $s_\lambda$ is the Schur function.
\end{enumerate}
\end{lemma}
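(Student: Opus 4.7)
The plan is to first verify that the explicit $D$ defined above satisfies properties (1)--(4), and then to show that these four properties force uniqueness, by exhibiting a terminating algorithm that rewrites any $D\Array{\theta_1\,\dots\,\theta_d\\a_1\,\dots\,a_d}$ as a signed sum of the $s_\lambda$'s prescribed by (4).

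For the verification: (1) is the antisymmetry of the bracket $[\,\cdot\,,\,\dots,\,\cdot\,]$ in its arguments, inherited from the determinant in Definition 4.3. (3) follows at once from $(1-\zeta)^n = 0$ in $\ouni$, which makes the $i$-th column vanish identically. (2) comes from the identity $(1-\zeta)^{a_i}\zeta^{-\theta_i} = (1-\zeta)^{a_i}\zeta^{-(\theta_i-1)} + (1-\zeta)^{a_i+1}\zeta^{-\theta_i}$ in $\ouni$ (an immediate consequence of $\zeta = 1 - (1-\zeta)$) combined with multilinearity of the bracket in the $i$-th slot. For (4), a direct calculation using (4.1) and the expansion $\varphi = \sum_k (-1)^k c_k(\zeta-1)^k$ yields
\[
D\Array{0\,\dots\,0\\b_1\,\dots\,b_d} \;=\; (-1)^{d(d-1)/2}\det(h_{n-d+i-1-b_j})_{i,j=1}^d,
\]
and substituting $b_j = n - \lambda_{d-j+1} - j$ followed by a column reversal puts this into the Jacobi--Trudi form $\det(h_{\lambda_i + j - i}) = s_\lambda$.

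For uniqueness, I process the columns one at a time to drive each $\theta_i$ to $0$. When $\theta_i > 0$, iterating (2) on the ``growing $a_i$'' summand and invoking (3) at $a_i = n$ yields the finite identity
\[
D\Array{\dots\,\theta_i\,\dots\\\dots\,a_i\,\dots}
\;=\; \sum_{k=a_i}^{n-1} D\Array{\dots\,\theta_i-1\,\dots\\\dots\,k\,\dots},
\]
in which every summand has $\theta_i$ strictly reduced; iterating reaches $\theta_i = 0$. When $\theta_i < 0$, the rearrangement $D(\theta_i;a_i) = D(\theta_i+1;a_i) - D(\theta_i+1;a_i+1)$ strictly raises $\theta_i$, again with (3) terminating the $a_i+1=n$ branches. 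After finitely many rounds, all $\theta_j = 0$. Then (1) reorders the columns (and kills any two coinciding $b_j$'s), (3) eliminates any column with $b_j \ge n$, and what remains are strictly decreasing tuples $n-1 \ge b_1 > \cdots > b_d \ge 0$, which are in bijection with partitions $\lambda \subset R_d$ through $b_j = n - \lambda_{d-j+1} - j$; the value is then supplied by (4) as $s_\lambda$.

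The main obstacle is the termination bookkeeping: reducing $|\theta_i|$ for a single column comes at the cost of enlarging $a_i$, so one must organise the double induction (one layer on $|\theta_i|$ per column, the inner cutoff supplied by (3) at $a_i = n$) carefully enough to ensure finiteness independently in each column. Once this is set up, all remaining ingredients are elementary identities in $\ouni$ together with the standard Jacobi--Trudi formula, and uniqueness of $D$ — hence the lemma — follows.
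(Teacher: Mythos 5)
Your proof is correct; note that the paper states this lemma without proof, so you are supplying an argument the authors omitted, and the one you give (verify (1)--(4) from the determinant definition and the relation $1=\zeta+(1-\zeta)$ in $\ouni$, then run a terminating rewriting scheme to reduce everything to the Jacobi--Trudi case (4)) is the natural one. The verification half checks out, including the sign bookkeeping in (4): $D\Array{0\,\dots\,0\\b_1\,\dots\,b_d}=(-1)^{d(d-1)/2}\det(h_{n-d+i-1-b_j})$ and the column reversal contributes the cancelling $(-1)^{d(d-1)/2}$. One small point in the uniqueness half: the lemma allows arbitrary non-negative $a_i$, and your telescoping terminates by invoking (3) only when the running index hits $a_i=n$ exactly, so inputs with some $a_i>n$ are not reached by your algorithm as written; this is repaired in one line by rewriting (2) as $D(\theta;a+1)=D(\theta;a)-D(\theta-1;a)$ and inducting up from $a=n$ to get $D(\theta;a)=0$ for all $a\geq n$, which also legitimizes your later step ``(3) eliminates any column with $b_j\ge n$.'' With that remark added, the argument is complete.
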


We have useful formulas such as
\begin{align}
&D\Array{d-1\,d-2\,\dots\,0\\0\,0\,\dots\,0}
=
D\Array{d-1\,d-2\,\dots\,0\\d-1\,d-2\,\dots\,0}
=g_{R_d},\\
&D\Array{d-1\,d-2\,\dots\,0\\n-\lambda_d-1\,n-\lambda_{d-1}-2\,\dots\,n-d-\lambda_1}
=g_{\lambda},\qquad (\lambda\subset R_d)\\
&
D\Array{\dots\,\theta\,\dots\\\dots\,a\,\dots}
=
\sum_{i}(-1)^{i}{p\choose i}
D\Array{\dots\,\theta+p\,\dots\\\dots\,a+i\,\dots}\label{eq:useful}.
\end{align}

\subsection{Lattice paths method}
Now we prove the following.
\begin{proposition}\label{prop:kappasigma} We have
$\sigma_d=\sum_{\mu\subset R_d }g_{\mu}$.
\end{proposition}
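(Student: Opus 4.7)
The plan is to mimic the proof of Proposition \ref{lemma:kappa}. First I would rearrange the columns of $S_d$: using $\pmb{a}_{d+j} = M_\zeta^d \pmb{a}_j$ and $\pmb{b}_j = M_\zeta^j \pmb{c}(\varphi)$, together with $|M_\zeta|=1$, pulling $M_\zeta^d$ out of every column of $|\pmb{b}_0, \ldots, \pmb{b}_{d-1}, \pmb{a}_d, \ldots, \pmb{a}_{n-1}|$ gives
\[
\sigma_d = [\zeta^{-d}, \zeta^{-(d-1)}, \ldots, \zeta^{-1}] = D(d, d-1, \ldots, 1).
\]

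Next I would expand each column using the identity $\sum_{a=0}^{n-1}(1-\zeta)^a = \zeta^{-1}$ in $\ouni$ (valid because $(1-\zeta)^n = 0$). This rewrites the $k$-th column factor as $\zeta^{-(d-k+1)} = \zeta^{-(d-k)}\sum_{a_k=0}^{n-1}(1-\zeta)^{a_k}$, and multilinearity of $D$ turns $\sigma_d$ into the unrestricted sum
\[
\sigma_d = \sum_{(a_1, \ldots, a_d) \in \{0, 1, \ldots, n-1\}^d} D\Array{d-1\,d-2\,\ldots\,0\\a_1\,a_2\,\ldots\,a_d}.
\]

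The heart of the argument is a cancellation that collapses this sum onto strictly decreasing tuples. The key identity is the partial geometric sum
\[
\sum_{b=a_i}^{n-1}(1-\zeta)^b = (1-\zeta)^{a_i}\zeta^{-1} \quad \text{in } \ouni,
\]
which holds because the tail $(1-\zeta)^{a_i}(1-\zeta)^{n-a_i} = (1-\zeta)^n$ vanishes. Multiplying by $\zeta^{-(d-i-1)}$ shows that once the other $a_j$ are held fixed, summing $a_{i+1}$ over $\{a_i, a_i+1, \ldots, n-1\}$ makes column $i+1$ of the $D$-determinant coincide with column $i$, so the contribution vanishes. Applying this successively for $i=1,2,\ldots,d-1$ (restricting $a_2 < a_1$, then $a_3 < a_2$ with the earlier indices already frozen, and so on) confines the outer sum to tuples with $a_1 > a_2 > \cdots > a_d$ in $\{0, 1, \ldots, n-1\}$. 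The bijection $a_k = n-k-\mu_{d-k+1}$ then converts such strictly decreasing sequences into partitions $\mu \subset R_d$, and by Lemma \ref{lemma:g} each surviving summand is exactly $g_\mu$, giving $\sigma_d = \sum_{\mu \subset R_d} g_\mu$. The main obstacle will be the bookkeeping of this iterated restriction: each cancellation step requires that a single fresh index $a_{i+1}$ is being summed while $a_i$ and all earlier indices are already frozen, so the order in which the restrictions are performed must be respected.
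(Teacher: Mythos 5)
Your proof is correct, but the heart of it differs from the paper's argument. Both proofs begin the same way: shifting columns by powers of $M_\zeta$ to get $\sigma_d=[\zeta^{-d},\dots,\zeta^{-1}]=D(d,d-1,\dots,1)$, and ending by invoking Lemma \ref{lemma:g} to identify the surviving terms with $g_\mu$, $\mu\subset R_d$. Where you diverge is in establishing the intermediate identity (the content of the paper's Lemma \ref{lemma:appendix}). The paper rewrites $D(d,\dots,1)$ as $D\Array{d\,\dots\,1\\d-1\,\dots\,0}$, expands both sides of the desired identity over the determinants $D\Array{0\,\dots\,0\\i_1\,\dots\,i_d}$ with binomial-coefficient entries $K^{\theta}_{a,i}$, and reduces everything to a determinant identity that it proves by the Lindstr\"om--Gessel--Viennot lemma, via an explicit bijection between two families of non-intersecting lattice paths. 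You instead expand each column via $\zeta^{-1}=\sum_{a=0}^{n-1}(1-\zeta)^a$ in $\ouni$ to get the unrestricted sum over all $(a_1,\dots,a_d)\in\{0,\dots,n-1\}^d$, and then collapse it onto strictly decreasing tuples by the partial geometric sum $\sum_{b\ge a_i}(1-\zeta)^b=(1-\zeta)^{a_i}\zeta^{-1}$, which makes column $i+1$ duplicate column $i$ whenever $a_{i+1}$ is summed over $\{a_i,\dots,n-1\}$. The iterated restriction is sound as you order it: at each stage the fresh variable still ranges over the full interval, so the vanishing sub-sums can be discarded one at a time. Your route is more elementary --- it stays entirely inside $\ouni$ and needs only multilinearity and the vanishing of determinants with repeated columns --- while the paper's LGV argument makes the combinatorial structure (the correspondence with partitions $\mu\subset R_d$ realized as path families) more visible and reusable for related identities.
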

Since we have Lemma \ref{lemma:g}, it suffices to show the following.
 \begin{lemma}\label{lemma:appendix} We have
\begin{equation}\label{eq:toprove}
D\Array{d\,d-1\,\dots\,1\\d-1\,d-2\,\dots\,0}
=
\sum_{n>a_1>a_2>\dots>a_d\geq 0}
D\Array{d-1\,d-2\,\dots\,0\\a_1\,a_2\,\dots\,a_d}.
\end{equation}
\end{lemma}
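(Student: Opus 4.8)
The plan is to expand the left-hand side of $(\ref{eq:toprove})$ using the recursion~(2) of Lemma~\ref{lemma:D}, to match the strictly decreasing terms thus produced with the right-hand side, and to show that the remaining terms cancel. First I would observe that in the left-hand side the $k$-th top entry exceeds the $k$-th bottom entry by exactly one, so a single application of~(2) in the $k$-th column, iterated and terminated by the vanishing~(3) of Lemma~\ref{lemma:D}, gives
\[
D\Array{d-k+1\\d-k}=\sum_{a_k=d-k}^{n-1}D\Array{d-k\\a_k}.
\]
Since~(2) modifies only one column, these expansions can be carried out independently in the $d$ columns, and one obtains
\[
D\Array{d\,d-1\,\dots\,1\\d-1\,d-2\,\dots\,0}
=\sum_{\substack{d-k\le a_k\le n-1\\ 1\le k\le d}}
D\Array{d-1\,d-2\,\dots\,0\\a_1\,a_2\,\dots\,a_d}.
\]

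Next I would note that any strictly decreasing sequence of integers $a_1>a_2>\dots>a_d\ge 0$ automatically satisfies $a_k\ge d-k$ for all $k$, while $a_1\le n-1$ is equivalent to $n>a_1$; hence the strictly decreasing tuples occurring in the box above are exactly those indexing the right-hand side of $(\ref{eq:toprove})$. So the lemma reduces to showing that the sum of $D\Array{d-1\,d-2\,\dots\,0\\a_1\,a_2\,\dots\,a_d}$ over the non-strictly-decreasing $(a_1,\dots,a_d)$ in the box vanishes.

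For this vanishing I would pass to a lattice-path model. Combining the determinant formula $(\ref{eq:detC})$ with the expansion $\zeta^{-r}=\sum_{l\ge 0}(-1)^l\binom{r+l-1}{l}(\zeta-1)^l$ valid in $\ouni$, one sees that $D\Array{d-1\,d-2\,\dots\,0\\a_1\,a_2\,\dots\,a_d}$ equals, up to a common sign and a power of $c_0$, a Jacobi--Trudi-type determinant whose $j$-th column evaluates complete homogeneous symmetric functions in the original variables augmented by a column-dependent number of extra variables equal to $1$. The Lindström--Gessel--Viennot lemma expresses such a determinant as the signed generating function of $d$-tuples of lattice paths carrying column-indexed step weights, the endpoint data of a tuple being encoded by $(a_1,\dots,a_d)$. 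Summing over the entire box lets these endpoints range freely; the sign-reversing involution at the first intersection cancels every tuple with a crossing, and the surviving non-crossing tuples are precisely those whose endpoint data is strictly decreasing. Hence the sum over the whole box equals the sum over the strictly decreasing tuples, which is $(\ref{eq:toprove})$.

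I expect the final step to be the main obstacle. The step weights are column-dependent---the $j$-th column carries extra variables $1^{c_j}$---so the naive Lindström--Gessel--Viennot involution need not be weight-preserving when it swaps the tails of two paths lying in different columns; one must either arrange the path picture so that this is harmless (for instance by using nested strips for the extra variables) or replace it by an explicit sign-reversing involution on non-strictly-decreasing tuples built directly from relations~(1)--(3) of Lemma~\ref{lemma:D}. A more computational alternative would be to evaluate the two sides of $(\ref{eq:toprove})$ independently---after reversing its columns the left-hand side becomes a flagged analogue of the Jacobi--Trudi determinant for $g_{R_d}$, while the right-hand side is $\sum_{\mu\subset R_d}g_\mu$---but identifying either side rests on essentially the same lattice-path input.
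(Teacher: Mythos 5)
Your first step is valid and is a genuinely different route from the paper. Applying recursion (2) of Lemma \ref{lemma:D} column by column and terminating with (3) does give
\begin{equation*}
D\Array{d\,d-1\,\dots\,1\\d-1\,d-2\,\dots\,0}
=\sum_{\substack{d-k\le a_k\le n-1\\ 1\le k\le d}}
D\Array{d-1\,d-2\,\dots\,0\\a_1\,a_2\,\dots\,a_d},
\end{equation*}
and your remark that the strictly decreasing tuples in this box are exactly the index set of the right-hand side of (\ref{eq:toprove}) is correct. So you have correctly reduced the lemma to the vanishing of the sum over the non-strictly-decreasing $(a_1,\dots,a_d)$ in the box. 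The paper does not perform this column-by-column telescoping; instead it expands \emph{both} sides of (\ref{eq:toprove}) in the common basis $D\Array{0\,\dots\,0\\i_1\,\dots\,i_d}$ with $n>i_1>\dots>i_d\ge 0$ via (\ref{eq:useful}), reducing the whole lemma to the integer determinant identity (\ref{eq2}) for the coefficients $K^\theta_{a,i}$, which it then proves by an \emph{explicit bijection} between non-crossing path families with source sets $\{A_j\}$ and $\{C_j\}$ — a counting bijection, not a sign-reversing cancellation.

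The cancellation you need is exactly where your argument stops being a proof, and you flag this yourself. It is not enough to gesture at a Lindstr\"om--Gessel--Viennot involution on the weighted path model coming from (\ref{eq:detC}): as you observe, the column-dependent $1^{\theta_j}$ step weights can break weight-preservation under the tail swap, and you do not resolve this. Moreover, the cancellation is genuinely nontrivial: the individual summands for non-strictly-decreasing $a$ need not vanish. For instance, already for $d=2$, $n=3$, expanding in the basis $D\Array{0\,0\\i_1\,i_2}$ one finds for $(i_1,i_2)=(2,1)$ that $(a_1,a_2)=(1,1)$ contributes $+1$, $(1,2)$ contributes $-1$, and $(2,2)$ contributes $0$ — the vanishing happens only in the aggregate. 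So a sign-reversing involution (or an equivalent) is unavoidable and must actually be produced.

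The cleanest way to finish along your route is probably to push your box sum through the same (\ref{eq:useful}) expansion the paper uses. Then the vanishing you need becomes the unweighted combinatorial identity $\sum\det(K^{d-l}_{a_l,i_m})_{l,m}=0$, the sum taken over non-strictly-decreasing $(a_l)$ in the box for each fixed $(i_m)$; here the path steps carry trivial weights, so the standard LGV tail-swap is weight-preserving and one can try to set up the involution honestly. Alternatively, note that, given your first step, your target vanishing is simply equivalent to the paper's identity (\ref{eq2}), so you could import the paper's bijection wholesale. Either way, as written the proposal has identified a correct reduction but leaves the essential cancellation unproven.
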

\begin{proof}
Let
\[
K^{\theta}_{a,i}:={i-a+\theta-1\choose \theta-1}= 
\left(
\begin{array}{c}
\mbox{The number of weakly increasing sequences} \\
a\leq x_1\leq x_2\leq \dots\leq x_\theta\leq i
\end{array}
\right).
\]
By using (\ref{eq:useful}) repeatedly, we have
\begin{align*}
D\Array{\theta_1\,\dots\,\theta_d\\a_1\,\dots\,a_d}
&=\sum_{n>i_1>\cdots>i_d\geq 0}\,\sum_{\sigma\in S_d}\mathrm{sgn}(\sigma)\prod_{j=1}^{d}K^{\theta_j}_{a_j,i_{\sigma(j)}}
D\Array{0\,\dots\,0\\i_1\,\dots\,i_d}\\
&=\sum_{n>i_1>\cdots>i_d\geq 0}\det(K^{\theta_l}_{a_l,i_m})_{1\leq l,m\leq d}\cdot
D\Array{0\,\dots\,0\\i_1\,\dots\,i_d}.
\end{align*}
Especially,
\begin{align*}
&D\Array{d\,d-1\,\dots\,1\\d-1\,d-2\,\dots\,0}
=\sum_{n>i_1>\cdots>i_d\geq 0}\det(K^{d-l+1}_{d-l,i_m})_{1\leq l,m\leq d}\cdot
D\Array{0\,\dots\,0\\i_1\,\dots\,i_d},\\
&D\Array{d-1\,d-2\,\dots\,0\\a_1\,a_2\,\dots\,a_d}
=\sum_{n>i_1>\cdots>i_d\geq 0}\det(K^{d-l}_{a_l,i_m})_{1\leq l,m\leq d}\cdot
D\Array{0\,\dots\,0\\i_1\,\dots\,i_d}.
\end{align*}
Hence, it suffices to prove
\begin{equation}\label{eq2}
\det(K^{d-l+1}_{d-l,i_m})_{l,m}=\sum_{n> a_1>\dots>a_d\geq 0}\det(K^{d-l}_{a_l,i_m})_{l,m}
\end{equation}
for arbitrary $i_1,\dots,i_d$.

Now consider the plane lattice in Figure \ref{figure}.
Let $A_j$ ($j=1,\dots,d$) be the point with coordinates $(d-j+1,d-j)$ and $B_j$ be the point with $(0,i_j)$.
We immediately find that the number of shortest paths from $A_l$ to $B_m$ on the lattice is $K^{d-j+1}_{d-j,i_m}$.
By the Lindstr\"{o}m-Gessel-Viennot lemma~\cite{GesselViennot1985}, the determinant $\det(K^{d-l+1}_{d-l,i_m})_{l,m}$ equals to the number of shortest non-intersecting paths from the source set $\{A_1,\dots,A_d\}$ to the target set $\{B_1,\dots,B_d\}$.
(See Figure \ref{figure}).

\begin{figure}[htbp]
\setlength\unitlength{0.5truemm}
\begin{center}
   \begin{picture}(100,100)
   \linethickness{0.3pt}
   \multiput(10,0)(10,0){10}{\line(0,1){100}}
   \multiput(0,0)(0,10){11}{\line(1,0){100}}
   \put(-2,-2){$\bullet$}
   \put(-10,0){$O$}
   \multiput(8,-2)(10,10){5}{$\bullet$}
   \put(10,-8){$A_5$}
   \put(21,2){$A_4$}
   \put(31,22){$A_3$}
   \put(41,32){$A_2$}
   \put(52,42){$A_1$}
   \put(-2,18){$\bullet$}
   \put(-2,38){$\bullet$}
   \put(-2,48){$\bullet$}
   \put(-2,78){$\bullet$}
   \put(-2,88){$\bullet$}
   \put(-32,20){$B_5(=C_5)$}
   \put(-12,40){$B_4$}
   \put(-12,50){$B_3$}   
   \put(-12,80){$B_2$}
   \put(-12,90){$B_1$}
   \linethickness{1.5pt}
   \put(0,90){\line(1,0){40}}
   \put(40,90){\line(0,-1){20}}
   \put(40,70){\line(1,0){10}}
   \put(50,70){\line(0,-1){30}}
   \put(0,80){\line(1,0){10}}
   \put(10,80){\line(0,-1){10}}
   \put(10,70){\line(1,0){10}}
   \put(20,70){\line(0,-1){10}}
   \put(20,60){\line(1,0){20}}
   \put(40,60){\line(0,-1){30}}
   \put(0,50){\line(1,0){30}}
   \put(30,50){\line(0,-1){30}}   
   \put(0,40){\line(1,0){10}}
   \put(10,40){\line(0,-1){10}}
   \put(10,30){\line(1,0){10}}
   \put(20,30){\line(0,-1){20}}
   \put(0,20){\line(1,0){10}}
   \put(10,20){\line(0,-1){20}}
   \newcommand{\whitedot}[2]
   {
   \put(#1,#2){\color{white}\circle*{3}} 
   \put(#1,#2){\circle{3}}   
   }
   \whitedot{0}{20}
   \whitedot{10}{30}
   \whitedot{20}{50}
   \whitedot{30}{60}
   \whitedot{40}{70}
   \put(11,33){$C_4$}
   \put(11,53){$C_3$}
   \put(21,63){$C_2$}
   \put(41,73){$C_1$}
   \end{picture}
\end{center}
\caption{
($d=5$). 
An example of non-intersecting paths from $\{A_1,\dots,A_5\}$ to $\{B_1,\dots,B_5\}$.
The plane lattice lacks the line on the $0$-th column. 
Here, $(i_1,i_2,i_3,i_4,i_5)=(9,8,5,4,2)$.
It corresponds to non-intersecting paths from $\{C_1,\dots,C_5\}$ to $\{B_1,\dots,B_5\}$ with $(a_1,a_2,a_3,a_4,a_5)=(7,6,5,3,2)$.
}
\label{figure}
\end{figure}

Let $C_j$ ($j=1,\dots,d$) be the point with coordinates $(d-j,a_j)$.
Similarly as above, the determinant $\det(K^{d-j}_{a_j,i_m})$ equals to the number of shortest non-intersecting paths from the source set $\{C_1,\dots,C_d\}$ to the target set $\{B_1,\dots,B_d\}$.
(See Figure \ref{figure}).

Denote 
\begin{gather*}
\mathfrak{X}=(\mbox{the set of shortest non-intersecting paths from $\{A_1,\dots,A_d\}$ to $\{B_1,\dots,B_d\}$}),\\
\mathfrak{Y}^{(a_1,\dots,a_d)}=
\left(
\begin{array}{c}
\mbox{The number of shortest non-intersecting paths}\\
\mbox{ from $\{C_1,\dots,C_d\}$ to $\{B_1,\dots,B_d\}$ with } C_j=(d-j,a_j)
\end{array}
\right).
\end{gather*}
There exists a natural one-to-one correspondence
\[
\bigcup_{n>a_1>\dots>a_d\geq 0}\mathfrak{Y}^{(a_1,\dots,a_d)}\to \mathfrak{X}
\]
which associates  
$$
(\mathcal{P}_1,\dots,\mathcal{P}_d)\in \mathfrak{Y}^{(a_1,\dots,a_d)},\qquad (\mathcal{P}_j\mbox{ is a path from $C_j$ to $B_j$})
$$
with the set of non-intersecting paths $(\mathcal{Q}_1,\dots,\mathcal{Q}_d)\in\mathfrak{X}$, where $\mathcal{Q}_j$ is the path which is obtained by adding to $\mathcal{P}_j$ a vertical segment from $A_j$ to $(d-j+1,a_j)$ and a horizontal edge from $(d-j+1,a_j)$ to $(d-j,a_j)$
(see Figure \ref{figure}).
Counting the cardinalities of these sets concludes (\ref{eq2}). \end{proof}

\setcounter{equation}{0}
\section{Quantum Grothendieck Polynomials of Grassmannian type} 
We fix an integer $d$ with $1\leq d\leq n$. Let $Gr_{d}(\mathbb{C}^n)$ denote the Grassmannian of $d$-dimensional subspaces of $\mathbb{C}^n.$
We will show how a quantization map for the Grassmannian $Gr_d(\mathbb{C}^n)$ can be interpreted 
in our context. Theorem \ref{thm:quantize} is the main result whose proof is given in the next section.
As an application, we obtain 
Theorem \ref{thm:Grass}.

\subsection{A result on $K$-theoretic Littlewood-Richardson rule}
For partitions $\lambda,\mu,\nu$, the $K$-{\it theoretic Littlewood-Richardson coefficients\/} is the integer $c_{\lambda,\mu}^\nu$ defined by 
$$
G_\lambda\cdot G_\mu=\sum_{\nu}(-1)^{|\nu|-|\lambda|-|\mu|}
c_{\lambda,\mu}^\nu\cdot G_\nu,
$$
where $\nu$ runs for all partitions. 
Buch's formula \cite{BuchLR} gives $c_{\lambda,\mu}^\nu$ as 
the number of set-valued tableaux which satisfy a certain property. 
Here we explain a rule of Ikeda-Shimazaki \cite{IkedaShimazaki}, which is equivalent to Buch's rule. 
Let $T$ be a set-valued tableau $T$.
The {\it column word\/} $cw(T)$ of $T$ is obtained by reading each column from top to bottom starting from the right most column to the left, where letters in the set filled in a box are read in the decreasing order. A word {\it builds\/} $\nu$ {\it on\/} $\lambda$
if $\nu$ is constructed by adding a box in the row associated to each entry of the word one by one while keeping the shape being a partition.
The coefficient $c_{\lambda,\mu}^\nu$ is the number of set-valued tableaux $T$ of shape $\mu$ such that $cw(T)$ builds $\nu$ from $\lambda$ (in \cite{IkedaShimazaki}, such $T$ is called $\lambda$-good tableaux of shape $\mu$ of content $\nu-\lambda$).

The following is stated without proof in \cite{BuchCombK2005} (proof of Corollary 1).
We include a proof for completeness.
\begin{proposition}\label{prop:LR} Let $\lambda$ be a partition such that 
$\lambda\subset R_d.$ For any partition $\mu$,
we have \begin{equation}
c_{\lambda,\mu}^{R_d}=\delta_{\lambda^{\!\vee}\!, \mu}.\label{eq:LR}
\end{equation} 
\end{proposition}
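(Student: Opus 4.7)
The plan is to apply the Ikeda--Shimazaki rule directly, and to show that any $\lambda$-good set-valued tableau contributing to $c_{\lambda,\mu}^{R_d}$ must in fact be single-valued of shape $\lambda^\vee$, at which point the classical Littlewood--Richardson rule for the Grassmannian (equivalently, Poincar\'e duality in $H^*(Gr_d(\mathbb{C}^n))$) finishes the argument.

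First I set up the count. Let $T$ be a $\lambda$-good set-valued tableau of shape $\mu$ whose column word $cw(T)$ builds $R_d$ on $\lambda$. Since $R_d$ has exactly $d$ rows, every letter of $cw(T)$, and hence every entry of $T$, must lie in $\{1,\dots,d\}$: otherwise an intermediate shape in the build would have to acquire more than $d$ rows, which cannot be a sub-partition of $R_d$. The content of $T$ is $R_d-\lambda$, so $T$ has $|R_d|-|\lambda|=|\lambda^\vee|$ entries in total (counted with set-valued multiplicity). Since every box of $T$ contains at least one entry, this forces $|\mu|\le |\lambda^\vee|$, with equality if and only if $T$ is single-valued.

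Next, in the equality case $|\mu|=|\lambda^\vee|$, $T$ is an ordinary semistandard Young tableau and the condition ``$cw(T)$ builds $R_d$ on $\lambda$'' is exactly the classical Littlewood--Richardson condition, via the usual translation between a tableau of shape $\mu$ with the column-word build condition and a skew tableau of shape $R_d/\lambda$ with lattice reverse reading word. Poincar\'e duality for the cohomology of the Grassmannian then immediately gives that such a tableau exists and is unique precisely when $\mu=\lambda^\vee$, which yields $c_{\lambda,\mu}^{R_d}=\delta_{\mu,\lambda^\vee}$ in this range.

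It remains to rule out genuinely set-valued $T$ with $|\mu|<|\lambda^\vee|$, and this is the main technical step. My approach would be structural and column-by-column. Within any column of $T$ the entries form a strictly increasing chain in $\{1,\dots,d\}$, and a set-valued box in a column creates a \emph{descent} in $cw(T)$, namely a pair of consecutively read letters $v>v'$. Tracking the running partition $\sigma^{(k)}$ through the build and using simultaneously that each $\sigma^{(k)}$ is a partition and that the terminal $\sigma^{(|\lambda^\vee|)}$ equals $R_d$ exactly, one argues that any such descent is incompatible with reaching $R_d$ under the prescribed content $R_d-\lambda$; the natural route is to focus on the descent generated by the leftmost set-valued box in $T$ and show that the letters remaining in $cw(T)$ after this descent cannot complete the target shape $R_d$ while respecting both the partition condition and the box-count in each row. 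Making this last contradiction precise in full generality is the principal obstacle; a plausible alternative is to exploit the equivalence of the Ikeda--Shimazaki rule with Buch's original formulation, whose dual counting of boxes versus entries would pin down $|\mu|=|\lambda^\vee|$ directly.
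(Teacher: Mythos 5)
Your reductions are sound as far as they go: every entry of $T$ lies in $\{1,\dots,d\}$ because a letter $a$ adds a box in row $a$ and boxes are never removed; the column word has exactly $|R_d|-|\lambda|=|\lambda^{\vee}|$ letters, so $|\mu|\le|\lambda^{\vee}|$ with equality if and only if $T$ is single-valued; and in the equality case the coefficient degenerates to the classical Littlewood--Richardson number, where $c_{\lambda,\mu}^{R_d}=\delta_{\mu,\lambda^{\vee}}$ is standard duality. But the proof is not complete: everything the proposition asserts beyond the classical statement is precisely that no genuinely set-valued tableau contributes, and that is the step you explicitly leave open (``making this last contradiction precise in full generality is the principal obstacle''). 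Neither of your suggested routes closes it. The descent heuristic cannot work as stated, because a descent produced by a set-valued box is perfectly compatible with the building condition for a general target shape $\nu$ --- for instance the single box $\{1,2\}$ has column word $2,1$ and builds $(2,1)$ on $(1)$, witnessing $c_{(1),(1)}^{(2,1)}=1$ --- so any correct argument must use the rectangularity of $R_d$ in an essential way, and you do not say how. The alternative via Buch's original formulation only reproduces the inequality $|\mu|\le|\lambda^{\vee}|$ that you already have; it does not force equality.

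For comparison, the paper's proof (due to Matsumura) is a column-by-column induction that exploits the rectangle directly: $R_d$ has a unique southeast corner, in row $d$, so the last letter of $cw(T)$ --- the minimum entry of the bottom box of the leftmost column --- must be $d$; this forces that box to be the singleton $\{d\}$, and peeling upward forces each box above it to be a singleton as well, after which one passes to the remaining columns with a smaller rectangle-minus-partition and repeats. The outcome is that $T$ equals one explicit single-valued tableau $T_0$ of shape $\lambda^{\vee}$. An argument of this kind (or an appeal to the fact stated without proof in Buch's ``Combinatorial $K$-theory,'' proof of Corollary 1) is what is needed to complete your step ruling out set-valued tableaux.
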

\begin{proof}(T. Matsumura) 
Let $T$ be a set-valued tableau such that $cw(T)$ builds $R_d$ on $\lambda.$ 
We prove that $T$ is the ordinary semistandard tableau $T_0$ such that 
$i$th column of $T_0$ is filled with $d,d-1,\ldots,r_i+1$ from the bottom to top, 
where $r_i$ is the number of boxes in $(n-d+i)$th column of $\lambda$ (see Example \ref{example:tableau} below). 
This in particular implies that the shape of $T_0$ is $\lambda^{\!\vee}.$

We proceed by induction on $i$ to prove that the first $i$ columns of $T$ coincide with those of $T_0.$ Let us consider the base case $i=1.$
Observe that $R_d$ has exactly one southeast corner, whose row index is $d$, and
consequently, the last letter of the column word $cw(T)$ should be $d$, which is 
the minimum entry of the bottom box of the first column of $\mu.$
The second last entry of $cw(T)$ is either $d$ or $d-1$. If it is $d$, then it cannot be in the 
the first column of $\mu$, because each column of $T$ contains at most one $d$. 
It follows that the first column of $T$ consists of one box filled with only $d$,
and hence $\mu$ consists of one row. 
Then $T$ cannot contain $d-1$.
It means that the right most column of $\lambda$ has $d-1$ boxes. 
Thus this case is done. 
Suppose the second last entry of $cw(T)$ is $d-1$. We claim that $\mu$ has at least two rows. If $\mu$, on the contrary, has only one row, then $T$ cannot contain $d-1$ ( $d$ is 
the minimum entry of the bottom box of the first column of $\mu$).
Next we claim that
this $d-1$
is the only entry in the box above the bottom box of the first column of $\mu$ (there should be $d-1$ in the box above the bottom, and there cannot be other entry in the box, since otherwise other smaller entry becomes the second last entry of $cw(T)$). 
If $\mu$ has exactly two rows, there are no $d-2$ in $T$ and the right most column of $\lambda$ has $d-2$ boxes. Then the case is done.
We can proceed in this way to see that $T$ coincides with $T_0$ at the first column.

Assume that the first $i-1$ columns of $T$ coincide with those of $T_0.$
We consider the column word of the part $T_{\geq i}$ consisting of
the last $(n-d-i+1)$ columns of $T$.
Let $\overline{\lambda}$ be the partition
obtained by removing the boxes of $R_d$ corresponding to  
the row numbers of the first $(i-1)$ columns of $T$.
The last letter of $cw(T_{\geq i})$ is $d$, because
$i$th column of $\overline{\lambda}$ has $d$ boxes,
and the bottom box of this column is the only box of $\overline{\lambda}$ which is
a southeast corner and does not belong to $\lambda$.
Then 
by the same argument of the case $i=1$, 
the entry of the bottom box of the first column of $T_{\geq i}$, the column $i$ of $T$, is $\{d\}.$
Moreover, by the same argument of the case $i=1$,
we know that $i$-th column of $T$ coincides with the one of $T_0.$
Hence the induction completes. 
\end{proof}
\begin{example}\label{example:tableau} Let $\lambda=(6,5,2,1)$ and $d=4, n=10$.
Then $\lambda^\vee=(5,4,1)$ and $T_0$ is given as follows:
$$\lambda= \Tableau{&&&&&\\&&&&\\&\\{}},\quad
T_0=\Tableau{2& 3 & 3 & 3 & 4\\ 3 & 4 & 4& 4 \\ 4}.$$
\end{example}

\subsection{Grothendieck polynomials}\label{ssec:GroP}
Grothendieck polynomials were introduced by Lascoux and Sch\"utzenberger \cite{LascouxSchutzenberger1982Groth} as polynomial representatives for the classes of structure sheaves of Schubert vatieties in $K(Fl_n)$ (cf. (\ref{eq:presenK}) below).
For each $1\leq i\leq n-1$,
we define
the {\it isobaric divided difference operator\/} given by
$$
\pi_i f=\frac{(1-x_{i+1})f-(1-x_i)s_if}{x_i-x_{i+1}}\quad (
f
\in \mathbb{Z}[x_1,\ldots,x_n])
$$
where the simple reflection $s_i=(i,i+1)$ acts by exchanging $x_i$ and $x_{i+1}.$
If $w_0=(n,n-1,\ldots,1)$ is the longest permutation in $S_n$ we set 
$$
\mathfrak{G}_{w_0}=x_1^{n-1}x_2^{n-2}\cdots x_{n-1}.
$$
There exist a unique family $\{\mathfrak{G}_w(x)\;|w\in S_n\}$ of polynomials 
such that 
$$
\pi_i\mathfrak{G}_w=\begin{cases}
\mathfrak{G}_{ws_i} & \mbox{if}\;\ell(ws_i)=\ell(w)-1\\
\mathfrak{G}_w &\mbox{if}\; \ell(ws_i)=\ell(w)+1.
\end{cases}
$$
\subsection{Quantization map of $K(Gr_d(\mathbb{C}^n))$ and $K$-Peterson isomorphism}
Let $\hat{\Lambda}$ denote the $\mathbb{C}$-span of the stable 
Grothendieck polynomials. This is a completion of the ring $\Lambda$.
It was shown in \cite{BuchLR} that $\hat{\Lambda}$, denoted as $\Gamma$ in the paper, is closed under multiplication.
Denote by $J_d$ the ideal of $\hat{\Lambda}$ defined as
\[
J_d:=\{f\in \hat{\Lambda}\,\vert\,f^\perp\cdot g_{R_d}=0 \}.
\]

\begin{proposition}\label{prop:KGr}
There is a canonical isomorphism
$
\textstyle K(Gr_d(\CC^n))\simeq \hat{\Lambda}/J_d.
$
\end{proposition}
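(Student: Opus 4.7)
The plan is to realize the isomorphism via the natural surjection $\pi_d\colon \hat\Lambda\to K(Gr_d(\CC^n))$ sending $G_\lambda\mapsto [\mathcal{O}_{X_\lambda}]$ for $\lambda\subset R_d$ and $G_\lambda\mapsto 0$ otherwise, and then to identify $\ker\pi_d$ with $J_d$. That such a surjection exists (and that $\{[\mathcal{O}_{X_\lambda}]\}_{\lambda\subset R_d}$ form a basis of $K(Gr_d(\CC^n))$ with structure constants obtained by truncating the $K$-theoretic Littlewood--Richardson coefficients $c_{\lambda,\mu}^\nu$ of $\hat\Lambda$ to $\nu\subset R_d$) is a standard fact going back to Buch \cite{BuchLR}; we simply cite it. Thus the whole statement reduces to a clean identification of kernels.

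Unwinding the definitions, $f\in\ker\pi_d$ iff, writing $f=\sum_\lambda a_\lambda G_\lambda$, one has $a_\lambda=0$ for every $\lambda\subset R_d$. Since $a_\lambda=\langle f,g_\lambda\rangle$ under the Hall pairing (using $\langle g_\lambda,G_\mu\rangle=\delta_{\lambda\mu}$), this is the same as $\langle f,g_\lambda\rangle=0$ for all $\lambda\subset R_d$. On the other hand, for any $\mu$,
\begin{equation*}
\langle f^\perp g_{R_d},G_\mu\rangle=\langle g_{R_d},f\,G_\mu\rangle=\text{coefficient of }G_{R_d}\text{ in }f\,G_\mu.
\end{equation*}
Expanding $f\,G_\mu$ by the $K$-theoretic Littlewood--Richardson rule gives
\begin{equation*}
\text{coeff of }G_{R_d}\text{ in }f\,G_\mu=\sum_\lambda a_\lambda(-1)^{|R_d|-|\lambda|-|\mu|}c_{\lambda,\mu}^{R_d}.
\end{equation*}
Since $c_{\lambda,\mu}^{R_d}$ vanishes unless both $\lambda,\mu\subset R_d$, Proposition~\ref{prop:LR} collapses this sum: for $\mu\subset R_d$ only the term $\lambda=\mu^\vee$ survives (with sign $+1$, because $|\mu|+|\mu^\vee|=|R_d|$), giving the value $a_{\mu^\vee}$; for $\mu\not\subset R_d$ the coefficient is $0$. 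Consequently
\begin{equation*}
f^\perp g_{R_d}=\sum_{\mu\subset R_d}a_{\mu^\vee}\,g_\mu=\sum_{\lambda\subset R_d}a_\lambda\,g_{\lambda^\vee},
\end{equation*}
and because $\lambda\mapsto\lambda^\vee$ is an involution on $\{\lambda:\lambda\subset R_d\}$ and $\{g_\nu\}_{\nu\subset R_d}$ is linearly independent, $f^\perp g_{R_d}=0$ holds iff $a_\lambda=0$ for every $\lambda\subset R_d$. This matches the description of $\ker\pi_d$ above, so $J_d=\ker\pi_d$ and the desired isomorphism follows.

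The only non-routine ingredient is Proposition~\ref{prop:LR} (already proved), which forces the sum over $\lambda$ in the coefficient of $G_{R_d}$ to reduce to a single term; without that identification one cannot pass from the abstract definition of $J_d$ to a concrete condition on coefficients. The rest is bookkeeping with the duality $\langle g_\lambda,G_\mu\rangle=\delta_{\lambda\mu}$ and the Buch-type presentation of $K(Gr_d(\CC^n))$; I do not anticipate any obstacle beyond verifying that the surjection $\pi_d$ is a well-defined ring map, which is immediate from the truncated $K$-theoretic Littlewood--Richardson rule.
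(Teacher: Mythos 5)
Your argument is correct and follows the same route as the paper: invoke Buch's Theorem 8.1 to identify $K(Gr_d(\CC^n))$ with $\hat\Lambda/I_d$ where $I_d$ is the span of the $G_\lambda$ with $\lambda\not\subset R_d$, and then use Proposition \ref{prop:LR} to show $J_d=I_d$. The paper states this last identification without detail, whereas you supply the duality computation $f^\perp g_{R_d}=\sum_{\lambda\subset R_d}a_\lambda\,g_{\lambda^\vee}$ that justifies it; that computation is sound (including the sign and the vanishing $c_{\lambda,\mu}^{R_d}=0$ unless $\lambda,\mu\subset R_d$).
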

\begin{proof}
The linear span of $G_\mu$'s such that $\mu\not\subset R_d$
is  an ideal of $\hat \Lambda$ and $\hat \Lambda/I_d$ is isomorphic $K(Gr_d(\CC^n))$ (Buch \cite{BuchLR}, Theorem 8.1).
From Proposition \ref{prop:LR}, we see that $J_d$ coincides with $I_d.$
\end{proof}

Via the isomorphism in Proposition \ref{prop:KGr},
we define the $\CC$-linear map 
$$
\widehat{Q}_d:K(Gr_d(\CC^n))\to \mathcal{QK}(Fl_n),\qquad f\!\!\!\mod{J_d}\mapsto \Phi_n^{-1}
\left(
{(f^\perp\cdot g_{R_d})}/{g_{R_d}}
\right).
$$ 

\subsection{Quantization map of Lenart--Maeno}
For $1\leq m\leq n$, define a polynomial $F^{(m)}_i\in \CC[x,Q]$ (cf. \cite[\S 3]{lenart2006quantum}) by
$$
F^{(m)}_i=\sum_{\substack{I\subset \{1,\ldots,m\}\\\;\# I=i}}\prod_{j\in I}(1-x_j)\prod_{j\in I,\;j+1\notin I }(1-Q_j),
$$
where $Q_n:=0$.
Note that $F^{(n)}_i$ is nothing but $F_i$ with $z_j=1-x_j.$

In \cite{lenart2006quantum}, Lenart and Maeno introduced the {\it quantization map} $\widehat{Q}$ and defined the {\it quantum Grothendieck polynomials} by using it.
Let $e_i$ be the $i$th elementary symmetric polynomial.
Let 
$f_i^{(j)}=e_i(1-x_1,\dots,1-x_j)$ for $1\leq i,j\leq n$. The following presentation is well-known (recall that $x_i$ is the $K$-theoretic first Chern class $c_1(\mathcal{L}_i^\vee):=1-[\mathcal{L}_i]$ of the dual of the tautological line bundle $\mathcal{L}_i$):
\begin{equation}
K(Fl_n)\simeq \mathbb{C}[x_1,\ldots,x_n]/\langle e_i(x_1,\ldots,x_n)|1\leq i\leq n\rangle.
\label{eq:presenK}
\end{equation}
Note that the ideal $\langle e_i(x_1,\ldots,x_n)|1\leq i\leq n\rangle$
is also generated by $f_i^{(n)}-\binom{n}{i}\;(1\leq i\leq n)$.
Let $L_n$ be the $\CC$-vector subspace of $\CC[x_1,\dots,x_n]$ generated by the elements
\begin{equation}
f^{(1)}_{i_1}f^{(2)}_{i_2}\cdots f^{(n-1)}_{i_{n-1}}\qquad (0\leq i_j\leq j).\label{eq:Fbasis}
\end{equation}
There exists a canonical isomorphism $K(Fl_n)\simeq L_n$ of $\CC$-vector spaces (\cite{lenart2006quantum}). 
\begin{definition}[\cite{lenart2006quantum}]\label{defn:hatQ}
The quantization map $\widehat{Q}:K(Fl_n)\to \mathcal{QK}(Fl_n)$ is the $\CC$-linear map defined by 
\begin{equation}
\widehat{Q}(f^{(1)}_{i_1}f^{(2)}_{i_2}\cdots f^{(n-1)}_{i_{n-1}}):=F^{(1)}_{i_1}F^{(2)}_{i_2}\cdots F^{(n-1)}_{i_{n-1}}\qquad (0\leq i_j\leq j).\label{eq:hatQ}
\end{equation}
\end{definition}
\begin{remark} The definition of $\widehat Q$ given in \cite{lenart2006quantum} is 
equivalent to Definition \ref{defn:hatQ} (see \cite[Proposition 3.16]{lenart2006quantum}).
\end{remark}

The quantum Grothendieck polynomial $\mathfrak{G}_w^Q$, for $w\in S_n$, is 
defined as 
$$
\mathfrak{G}_w^Q=\widehat Q(\mathfrak{G}_w).
$$
For $f\in \hat \Lambda$, let $f(x_1,\dots,x_d)$ denotes the polynomial
by setting $x_i=0$ for $i>d$ in the symmetric function $f\in \hat\Lambda$.
Let  $\pi: Fl_n\rightarrow Gr_d(\CC^n)$ be the projection sending $V_\bullet$ to $V_d$.
The induced morphism $\pi^* : K(Gr_d(\CC^n))\simeq \hat{\Lambda}/J_d\hookrightarrow K(Fl_n)$
is given by $f\!\!\mod{J_d}\mapsto f(x_1,\dots,x_d)$.

The main statement of this section is the following. The proof is given in \S \ref{sec:ProofGr}.

\begin{theorem}\label{thm:quantize}
The following diagram commutes
\[
\xymatrix
{\hat{\Lambda}/J_d \simeq
K(Gr_d(\CC^n)) \ar[rd]_{\widehat{Q}_d}\ar@{^{(}->}^{\pi^*}[rr]& & K(Fl_n)\ar[dl]^{\widehat{Q}}\\
 & \mathcal{QK}(Fl_n)&
}.
\]
\end{theorem}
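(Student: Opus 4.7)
Since both $\widehat{Q}_d$ and $\widehat{Q} \circ \pi^*$ are $\CC$-linear maps from $K(Gr_d(\CC^n)) \simeq \hat{\Lambda}/J_d$ to $\mathcal{QK}(Fl_n)$, the plan is to verify the diagram on the Schubert basis $\{G_\mu\bmod J_d : \mu \subset R_d\}$ of $\hat{\Lambda}/J_d$; the statement then extends to the whole space by linearity.

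First I would evaluate the left-hand map. Starting from the definition $\widehat{Q}_d(G_\mu\bmod J_d) = \Phi_n^{-1}\bigl((G_\mu^\perp g_{R_d})/g_{R_d}\bigr)$ and expanding via the Hall pairing, the coefficients $\langle G_\mu^\perp g_{R_d}, G_\nu\rangle = \langle g_{R_d}, G_\mu G_\nu\rangle$ are the $K$-theoretic Littlewood-Richardson coefficients $c^{R_d}_{\mu,\nu}$. By Proposition \ref{prop:LR} one has $c^{R_d}_{\mu,\nu} = \delta_{\nu, \mu^\vee}$ for $\mu \subset R_d$ (the sign issue is automatic since $|\mu| + |\mu^\vee| = |R_d|$), so $G_\mu^\perp g_{R_d} = g_{\mu^\vee}$ and hence $\widehat{Q}_d(G_\mu\bmod J_d) = \Phi_n^{-1}(g_{\mu^\vee}/\tau_d)$. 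For the right-hand map, I would use the classical identification $G_\mu(x_1, \ldots, x_d) = \mathfrak{G}_{w_{\mu,d}}$ in $K(Fl_n)$ (Grothendieck polynomials of Grassmannian permutations coincide with stable Grothendieck polynomials in finitely many variables, cf.\ \cite{BuchLR}), so that $\pi^*(G_\mu\bmod J_d) = \mathfrak{G}_{w_{\mu,d}}$ and $\widehat{Q}(\pi^*(G_\mu\bmod J_d)) = \mathfrak{G}_{w_{\mu,d}}^Q$ by the very definition of the quantum Grothendieck polynomial.

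Matching the two computations, the commutativity of the diagram on the Schubert basis becomes the identity $\mathfrak{G}_{w_{\mu,d}}^Q = \Phi_n^{-1}(g_{\mu^\vee}/\tau_d)$ for all $\mu \subset R_d$, which is precisely Theorem \ref{thm:Grass}. Thus Theorem \ref{thm:quantize} is essentially equivalent to Theorem \ref{thm:Grass}: the Schubert basis reduction shows that one can recover either statement from the other by linearity and specialization. The main obstacle is to establish the identity $\mathfrak{G}_{w_{\mu,d}}^Q = \Phi_n^{-1}(g_{\mu^\vee}/\tau_d)$ in a manner that is not circular with Theorem \ref{thm:Grass} (which the paper derives as a corollary of the present theorem). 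I would attempt this independent verification by induction on $|\mu|$, using a quantum Pieri/Monk-type formula in $\mathcal{QK}(Fl_n)$ on the left matched against the classical coproduct Pieri rule for dual stable Grothendieck polynomials $g_\mu$ on the right; alternatively, by a direct determinantal comparison of the formula for $g_{\mu^\vee}$ from Lemma \ref{lemma:g} with Lenart-Maeno's combinatorial expression for $\mathfrak{G}_{w_{\mu,d}}^Q$ after substituting the explicit rational formulas (\ref{eq:zQ}) for $z_i$ and $Q_j$ arising from the $K$-Peterson isomorphism.
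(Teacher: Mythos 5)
Your reduction to the Schubert basis is valid as far as it goes: since $\{G_\mu \bmod J_d\}_{\mu\subset R_d}$ spans $\hat\Lambda/J_d$, and your computations of both legs on this basis are correct (the evaluation $G_\mu^\perp\cdot g_{R_d}=g_{\mu^\vee}$ via Proposition \ref{prop:LR}, and $\widehat Q(\pi^*(G_\mu\bmod J_d))=\mathfrak{G}^Q_{w_{\mu,d}}$), you have correctly shown that Theorem \ref{thm:quantize} is \emph{equivalent} to the identity $\Phi_n(\mathfrak{G}^Q_{w_{\mu,d}})=g_{\mu^\vee}/\tau_d$. But that identity is exactly Theorem \ref{thm:Grass}, which the paper \emph{deduces from} Theorem \ref{thm:quantize} (via Corollary \ref{cor:Gr}); so your argument, as written, is circular. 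You acknowledge this and gesture at two ways to close the loop, but neither is carried out, and the first (a quantum Monk/Pieri induction in $\mathcal{QK}(Fl_n)$ matched against a Pieri rule for the $g_\mu$) is exactly the kind of structural input the authors explicitly say they do \emph{not} have available --- they emphasize that $\Phi_n$ is constructed without a quantum Monk formula for $\mathcal{QK}(Fl_n)$. The entire technical content of the theorem is the missing identity, so the proposal has a genuine gap rather than being an alternative proof.

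The paper's way around the circularity is to test the diagram on a \emph{different} basis of $\hat\Lambda/J_d$, namely $\{\kappa_d(s_\lambda)\bmod J_d\}_{\lambda\subset R_d}$ where $\kappa_d$ is the involution $x_i\mapsto 1-x_i$ $(i\le d)$. The point of this choice is that $\pi^*$ sends $\kappa_d(s_\lambda)$ to $s_\lambda(1-x_1,\dots,1-x_d)$, whose dual Jacobi--Trudi expansion lies visibly in the monomial basis $f^{(1)}_{i_1}\cdots f^{(n-1)}_{i_{n-1}}$ on which $\widehat Q$ is defined, giving $\widehat Q(s_\lambda(1-x))=S^Q_{\lambda,d}$ by pure bookkeeping; no such direct computation is available for $\mathfrak{G}_{w_{\mu,d}}=G_\mu(x_1,\dots,x_d)$. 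The remaining (and hard) step is Proposition \ref{prop:PhiF}, i.e.\ $\Phi_n(S^Q_{\lambda,d})=\kappa_d(s_\lambda)^\perp\cdot g_{R_d}/g_{R_d}$, which the paper proves by computing both sides as the same determinant $D(d-i_1,\dots,d-i_d)$: the left side via the Gauss-decomposition minor identities (Lemma \ref{lem:GaussMinor}, Proposition \ref{prop:determinant2}), the right side via the Boson--Fermion correspondence (Proposition \ref{prop:numer}). None of this machinery appears in your proposal; to complete your argument you would either have to reproduce it or supply a genuinely independent proof of Theorem \ref{thm:Grass}, which you have not done.
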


\begin{corollary}\label{cor:Gr} Let $\lambda\subset R_d.$ We have
$\widehat{Q}_d(G_\lambda\;\mathrm{mod}\; J_d)=
\mathfrak{G}_{w_{\lambda,d}}^Q.$
\end{corollary}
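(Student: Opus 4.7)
The plan is to derive this corollary directly from Theorem~\ref{thm:quantize}, using a classical identification of Grothendieck polynomials of Grassmannian permutations with stable Grothendieck polynomials in finitely many variables. Concretely, I would argue in three short steps.

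First, I would recall the classical fact (originally due to Lascoux--Sch\"utzenberger) that for a $d$-Grassmannian permutation $w_{\lambda,d}$ with $\lambda\subset R_d$, the ordinary Grothendieck polynomial has the closed form
\begin{equation*}
\mathfrak{G}_{w_{\lambda,d}}(x_1,\ldots,x_n)=G_\lambda(x_1,\ldots,x_d),
\end{equation*}
where the right-hand side is the finite Grothendieck polynomial obtained by setting $x_{d+1}=\cdots=x_n=0$ in the symmetric function $G_\lambda$. This is a standard result that can be proved, for instance, by applying the isobaric divided differences $\pi_i$ (Section~\ref{ssec:GroP}) to the top polynomial and checking it agrees with the known Jacobi--Trudi type formula for $G_\lambda$, or simply quoted from the Grothendieck polynomial literature.

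Second, interpreting this formula through the map $\pi^*\colon K(Gr_d(\CC^n))\hookrightarrow K(Fl_n)$, which sends $f\bmod J_d$ to $f(x_1,\ldots,x_d)$, I get the identity
\begin{equation*}
\pi^*(G_\lambda\bmod J_d)=\mathfrak{G}_{w_{\lambda,d}}\quad\text{in }K(Fl_n).
\end{equation*}
Applying Lenart--Maeno's quantization map $\widehat{Q}$ to both sides and using the definition $\mathfrak{G}_w^Q=\widehat{Q}(\mathfrak{G}_w)$ gives
\begin{equation*}
\widehat{Q}\bigl(\pi^*(G_\lambda\bmod J_d)\bigr)=\mathfrak{G}_{w_{\lambda,d}}^Q.
\end{equation*}

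Third, Theorem~\ref{thm:quantize} asserts the commutativity $\widehat{Q}\circ\pi^*=\widehat{Q}_d$, so the left-hand side above equals $\widehat{Q}_d(G_\lambda\bmod J_d)$, which yields the claimed identity. The only nontrivial input beyond Theorem~\ref{thm:quantize} is the Grassmannian case identification $\mathfrak{G}_{w_{\lambda,d}}=G_\lambda(x_1,\ldots,x_d)$; assuming this is cited from the Grothendieck polynomial literature, the corollary is immediate from the commutative diagram, so I would expect no significant obstacle beyond organizing these two ingredients.
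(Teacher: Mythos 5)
Your proposal is correct and follows essentially the same route as the paper: the paper's proof also consists of citing the identity $G_\lambda(x_1,\ldots,x_d)=\mathfrak{G}_{w_{\lambda,d}}$ (from Buch \cite{BuchLR}, \S 8) and then chaining it with the commutative diagram of Theorem \ref{thm:quantize} and the definition $\mathfrak{G}_w^Q=\widehat{Q}(\mathfrak{G}_w)$. No gaps.
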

\begin{proof}
It is known that $G_\lambda(x_1,\ldots,x_d)=\mathfrak{G}_{\lambda,d}$ (Buch \cite{BuchLR}, \S 8), so we have
$$
\widehat Q_d (G_\lambda\;\mathrm{mod}\; J_d)
=\widehat Q(G_\lambda(x_1,\ldots,x_d))
=\widehat Q(\mathfrak{G}_{\lambda,d})
=\mathfrak{G}_{\lambda,d}^Q.
$$
\end{proof}

Now we prove Theorem \ref{thm:Grass}.
\begin{proof}
Corollary \ref{cor:Gr} is equivalent to
$$
\Phi_n(\mathfrak{G}_{w_{\lambda,d}}^Q)=\frac{G_\lambda^\perp\cdot g_{R_d}}{g_{R_d}}.
$$
So we need to show
$G_\lambda^\perp\cdot g_{R_d}=g_{\lambda^\vee},$
which is equivalent to Proposition \ref{prop:LR}.
\end{proof}

\setcounter{equation}{0}
\section{Proof of Theorem \ref{thm:quantize}}\label{sec:ProofGr}
\subsection{Outline of the proof}
Let $\lambda$ be partition such that $\lambda\subset R_d.$ 
We define a quantized version of Schur polynomial 
$$
{S}^{Q}_{\lambda,d}:=\det(F^{(d+j-1)}_{\lambda_i'-i+j})_{i,j=1}^{\ell(\lambda')}.
$$
Note that the polynomial ${S}^{Q}_{\lambda,d}$ is an element in 
$\mathbb{Z}[Q][x_1,\ldots,x_d]^{S_d}.$
\begin{proposition} We have
\begin{equation}
\widehat{Q}(s_\lambda(1-x_1,\dots,1-x_d))=S_{\lambda,d}^Q.\label{eq:hat Q s}
\end{equation}
\end{proposition}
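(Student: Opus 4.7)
The plan is to reduce the identity to a flagged Jacobi--Trudi expansion whose entries sit naturally in the basis (\ref{eq:Fbasis}) used to define $\widehat{Q}$. The key intermediate step is the classical flagged Jacobi--Trudi identity
\begin{equation*}
s_\lambda(y_1,\ldots,y_d)=\det\bigl(e_{\lambda_i'-i+j}(y_1,\ldots,y_{d+j-1})\bigr)_{i,j=1}^{\ell(\lambda')},
\end{equation*}
valid for any partition $\lambda$ with $\ell(\lambda)\leq d$. This is a special instance of Wachs's flagged Jacobi--Trudi theorem with the monotone flag $\phi'_j=d+j-1$, and admits a direct proof via the Lindstr\"om--Gessel--Viennot lemma: the non-intersection condition forces the extra space offered by the flag to go unused, so that the path enumeration reduces to that of column-strict tableaux of shape $\lambda$ with entries in $\{1,\ldots,d\}$. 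A purely algebraic proof by successive column operations using $e_k(y_1,\ldots,y_m)=e_k(y_1,\ldots,y_{m-1})+y_m\,e_{k-1}(y_1,\ldots,y_{m-1})$ also works.

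Substituting $y_i=1-x_i$ in the flagged identity yields
\begin{equation*}
s_\lambda(1-x_1,\ldots,1-x_d)=\det\bigl(f^{(d+j-1)}_{\lambda_i'-i+j}\bigr)_{i,j=1}^{\ell(\lambda')}.
\end{equation*}
Expanding the right-hand side as a signed sum over $\sigma\in S_{\ell(\lambda')}$ produces monomials $\prod_{j=1}^{\ell(\lambda')} f^{(d+j-1)}_{\lambda'_{\sigma(j)}-\sigma(j)+j}$, whose superscripts $d,d+1,\ldots,d+\ell(\lambda')-1$ are pairwise distinct and, since $\lambda\subset R_d$ forces $\ell(\lambda')\leq n-d$, all lie in $\{1,\ldots,n-1\}$. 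After padding each summand with the trivial factors $f^{(m)}_0=1$ for the remaining superscripts $m$, each summand is an element of the basis (\ref{eq:Fbasis}): the index constraint $0\leq\lambda'_{\sigma(j)}-\sigma(j)+j\leq d+j-1$ holds automatically since $\lambda_1'\leq d$ gives the upper bound while negative values produce vanishing terms.

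By Definition \ref{defn:hatQ}, the linear map $\widehat{Q}$ acts on each basis monomial by replacing every factor $f^{(m)}_{k}$ with $F^{(m)}_{k}$ (with $F^{(m)}_0=1$). Linearity therefore lets the replacement commute with the determinant expansion, giving
\begin{equation*}
\widehat{Q}\bigl(\det(f^{(d+j-1)}_{\lambda_i'-i+j})\bigr)=\det\bigl(F^{(d+j-1)}_{\lambda_i'-i+j}\bigr)=S^{Q}_{\lambda,d},
\end{equation*}
which is (\ref{eq:hat Q s}). The main obstacle is the flagged Jacobi--Trudi identity itself; once that is in hand, the remaining steps are formal manipulations within the definition of $\widehat{Q}$.
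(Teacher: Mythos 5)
Your proposal is correct and follows essentially the same route as the paper: the paper also passes through the flagged dual Jacobi--Trudi identity $s_\lambda(z_1,\dots,z_d)=\det(e_{\lambda_i'-i+j}^{(d+j-1)})$, obtained by the same column operations via $e_i^{(j)}=e_i^{(j-1)}+z_j e_{i-1}^{(j-1)}$, substitutes $z_i=1-x_i$, and then observes each term of the determinant expansion lies in the basis (\ref{eq:Fbasis}) so that $\widehat{Q}$ applies termwise. Your explicit verification that the superscripts are distinct and bounded by $n-1$ and that the subscripts obey $0\leq i_j\leq j$ is a welcome elaboration of a step the paper leaves implicit.
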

\begin{proof} Let $e_i^{(j)}=e_i(z_1,\ldots,z_j).$ 
We know the dual Jacobi-Trudi formula $$s_\lambda(z_1,\dots,z_d)=\det(e_{\lambda_i'-i+j}^{(d)})_{i,j=1}^{\ell(\lambda')}.$$
Since $e_i^{(j)}=e_i^{(j-1)}+z_{j}e_{i-1}^{(j-1)},$
it is easy to show $s_\lambda(z_1,\dots,z_d)=\det(e_{\lambda_i'-i+j}^{(d+j-1)})_{i,j=1}^{\ell(\lambda')}$.
Now by substituting $z_i=1-x_i$ to this equality, we have
\begin{equation}
s_\lambda(1-x_1,\dots,1-x_d)=\det(f_{\lambda_i'-i+j}^{(d+j-1)})_{i,j=1}^{\ell(\lambda')}.\label{eq:flags}
\end{equation}
One observes that each term of the expansion on the right hand side of (\ref{eq:flags}) 
is of the from (\ref{eq:Fbasis}). Then (\ref{eq:hat Q s})
follows from the definition of $\widehat{Q}$. 
\end{proof}

Let $p_i\in \Lambda$ be the $i$th power sum symmetric function (\cite{macdonald1998symmetric}).
We consider the ring homomorphism $\kappa_d: \Lambda\rightarrow \Lambda$ given by 
\begin{equation}\label{eq:tildep}
\kappa_d(p_i):=d-{i\choose 1}p_1+{i\choose 2}p_2-\dots+(-1)^{i}{i\choose i}p_{i}.
\end{equation}
Recall that each element of $\Lambda$ is a symmetric formal power series 
in $x=(x_1,x_2,\ldots)$. $\kappa_d$ is given by $x_i\mapsto 1-x_i\;(1\leq i\leq d)$, 
$x_j\mapsto x_j\;(j>d).$
Thus obviously $\kappa_d$  is an involution. We will show the following in the sequel of this section.
\begin{proposition}\label{prop:PhiF} We have
\begin{equation}
\Phi_n({S}^{Q}_{\lambda,d})=\frac{\kappa_d(s_\lambda)^\perp\cdot g_{R_d}}{g_{R_d}}.\label{eq:PhiF}
\end{equation}\end{proposition}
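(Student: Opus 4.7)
The identity \eqref{eq:PhiF} links two very different-looking objects: on the left, the $\Phi_n$-image of a determinant in the quantum elementary polynomials $F^{(m)}_i$; on the right, the adjoint-multiplication action of $\kappa_d(s_\lambda)$ on the dual stable Grothendieck polynomial $g_{R_d}=\tau_d$. The plan is to bridge them through explicit formulas in the $\tau$-functions $T_i,S_i$ from Definition~\ref{def:tau} and Proposition~\ref{prop:zQST}.

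First I would compute $\Phi_n(F^{(m)}_i)$ for each $m\le n$ individually. The generating polynomial $\sum_{i=0}^{m}(-\zeta)^{m-i}F^{(m)}_i(z,Q)$ may be interpreted, up to explicit factors in the $Q_j$'s and $z_j$'s, as a principal $m\times m$ minor of $\zeta B-A$, via a Laplace expansion paralleling the one yielding \eqref{eq:w1}--\eqref{eq:w2} in the proof of Proposition~\ref{prop:RUexist}. Substituting the rational expressions \eqref{eq: Phiz} and \eqref{eq: PhiQ} for $z_i,Q_i$, together with the determinantal descriptions of $T_i,S_i$, should express $\Phi_n(F^{(m)}_i)$ as a ratio whose denominator is a product of $\tau_\bullet$ and $\sigma_\bullet$, and whose numerator is a determinant of the $\tau$-function type of Definition~\ref{def:tau}. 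The key identification at this stage is that this ratio equals $f_{m,i}^{\perp}g_{R_d}/g_{R_d}$ for a symmetric function $f_{m,i}\in\Lambda$ that coincides with $\kappa_d(e_i)$ when $m=d$ and differs from it in a controlled way when $m>d$.

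Second, I would combine these single-entry identities. Since $\Phi_n$ is a ring homomorphism and the determinant is multilinear in its columns, $\Phi_n(S^Q_{\lambda,d})$ equals the determinant of the single-entry images computed above. The dual Jacobi--Trudi formula $s_\lambda=\det(e_{\lambda'_i-i+j})$ yields, after applying $\kappa_d$ and using the comultiplication in the Hopf algebra $\Lambda$, a matching determinantal expansion of $\kappa_d(s_\lambda)^\perp g_{R_d}$. Because $^\perp$ satisfies $(fg)^\perp h=f^\perp(g^\perp h)$ but is not multiplicative in the way a determinant needs, the reduction from a product-of-$^\perp$-operators determinant to the single action $\kappa_d(s_\lambda)^\perp$ has to go through the coproduct of $g_{R_d}$, with cross-terms between rows collapsing to the desired form.

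The main obstacle is reconciling the column-dependent shift $m=d+j-1$ on the left, which forces each column of $S^Q_{\lambda,d}$ to use a different truncation, with the uniform appearance of $\kappa_d$ on the right. This shift is precisely what causes all the denominators to telescope into a single $g_{R_d}$, and aligning it with the reshuffling produced by successive applications of $p_k^\perp$ is combinatorially delicate. I would carry out this matching using the determinantal expression \eqref{eq:g} for $g_{R_d}$, reformulated through the $D$-notation of Lemma~\ref{lemma:D}: the action of $p_k^\perp$ on a $D$-bracket produces a controlled row shift whose combinatorics (tracked through the recursions in Lemma~\ref{lemma:D}, much as in the lattice-path argument of Lemma~\ref{lemma:appendix}) can be matched against the column shifts on the left-hand side, with sign and binomial bookkeeping closing the argument.
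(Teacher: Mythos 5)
Your overall architecture --- compute $\Phi_n(F^{(m)}_i)$ entry by entry as a fraction of the form $f_{m,i}^\perp\, g_{R_d}/g_{R_d}$ and then assemble the determinant --- has two gaps that I do not see how to close. First, the entrywise claim is unjustified and is almost certainly not the right normal form: the natural statement for a single $F^{(m)}_i$ is the one-column case of the proposition with $d$ replaced by $m$, namely $\Phi_n(F^{(m)}_i)=\kappa_m(e_i)^\perp g_{R_m}/g_{R_m}$, whose denominator is $g_{R_m}$, not $g_{R_d}$; rewriting this with denominator $g_{R_d}$ for every column $j$ (i.e.\ for every $m=d+j-1$) would itself require an identity of the same depth as the one you are trying to prove. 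Second, and more seriously, even granting such entrywise formulas, a determinant whose $(i,j)$ entry is $f_{ij}^\perp\, g_{R_d}/g_{R_d}$ is not $(\det f_{ij})^\perp g_{R_d}/g_{R_d}$: the operator $f\mapsto f^\perp$ turns products into compositions, and the coproduct identity $f^\perp(gh)=\sum(f_{(1)}^\perp g)(f_{(2)}^\perp h)$ involves the coproduct of $f$, not of $g_{R_d}$, so ``cross-terms collapsing through the coproduct of $g_{R_d}$'' restates the difficulty rather than resolving it.

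The idea the proposal is missing is that the column-dependent truncation $m=d+j-1$ makes the \emph{whole} determinant $S^Q_{\lambda,d}$ a single minor of the unipotent factor $U$ of the decomposition $\varphi(C_\uni)=U^{-1}R$: by Lemma \ref{lemma:RtoF} the entries of row $a$ of $U$ are exactly the $F^{(a-1)}_\bullet$, so $S^Q_{\lambda,d}=\xi^{j_1,\dots,j_s}_{d+1,\dots,d+s}(U\varepsilon)$ as in (\ref{eq:S=xi}). One then applies the Gauss-decomposition minor identity of Lemma \ref{lem:GaussMinor} \emph{once} to this minor; it is that lemma which produces the fixed denominator $\xi^{1,\dots,d}_{1,\dots,d}(X)=\pm c_0^d\, g_{R_d}$ for all minors with row set $\{d+1,\dots,d+s\}$ simultaneously --- the telescoping you hope for does not happen at the level of individual entries. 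The remaining half of your plan (the $D$-bracket recursions and the shift action of $p_k^\perp$) is essentially the paper's treatment of the right-hand side, namely Lemma \ref{lemma:4.2} together with the boson-fermion computation in Proposition \ref{prop:numer}, and that part is sound in outline; but without the minor-of-$U$ observation the left-hand side does not reduce to a single $D$-determinant divided by $g_{R_d}$.
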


The equation (\ref{eq:PhiF}) is equivalent to
\begin{equation}
\widehat{Q}_d(\kappa_d(s_{\lambda}) \mod J_d)={S}^{Q}_{\lambda,d}.
\end{equation}
On the other hand, the element 
$
\kappa_d(s_\lambda)\mod J_d
$ of $\hat \Lambda/J_d$ is mapped to  
$
s_\lambda(1-x_1,\dots,1-x_d)\in L_n\simeq K(Fl_n).
$
Thus we have
$$
\widehat Q\left(\pi^*(\kappa_d(s_{\lambda}) \mod J_d)\right)=
\widehat Q\left(s_\lambda(1-x_1,\dots,1-x_d)\right)=S_{\lambda,d}^Q.
$$ 
Since 
$\kappa_d(s_{\lambda}) \mod J_d\;(\lambda\subset R_d)$
form a basis of $\hat \Lambda/J_d,$
Theorem \ref{thm:quantize} holds.
\subsection{$\Phi_n({S}^{Q}_{\lambda,d})$ as a ratio of determinants}
In this subsection we prove the following proposition.

\begin{proposition}\label{prop:determinant2}Let $\lambda$ be a partition contained in $R_d$. We define  
the increasing sequence $(i_1,\cdots,i_d)$ by setting
\begin{equation}
i_a=\lambda_{d+1-a}+a\quad(a=1,\ldots,d). \label{eq:ilambda}
\end{equation}
Then we have
\begin{equation}
\renewcommand\arraystretch{0}
\Phi_n({S}^{Q}_{\lambda,d})=
\frac
{
  D(d-i_1,d-i_2,\ldots,d-i_d)
}
{
  D(d-1,d-2,\dots,0)
}.\label{eq:determinant2}
\end{equation}
\end{proposition}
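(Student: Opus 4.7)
My plan is to reduce $S^Q_{\lambda,d}$ to a single ``coefficient-extraction'' determinant of the principal minor polynomials
\begin{equation*}
\Delta_m(\zeta) \;:=\; \det\bigl((\zeta B - A)_{[m]}\bigr) \;=\; \det\bigl(\zeta\cdot 1 - L_{[m]}\bigr),
\end{equation*}
and then evaluate the image under $\Phi_n$ using the bracket formalism of Section~\ref{sec:tau}. The tridiagonal recurrence $\Delta_m = (\zeta - z_m)\Delta_{m-1} + \zeta Q_{m-1}z_{m-1}\Delta_{m-2}$ (with $\Delta_1 = \zeta - z_1(1-Q_1)$) gives $\Delta_m(\zeta) = \zeta^m + \sum_{i=1}^m (-1)^i F^{(m)}_i\,\zeta^{m-i}$, so each matrix entry of $S^Q_{\lambda,d}$ is, up to a sign, a single $\zeta$-coefficient of a column polynomial. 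Multilinearity then yields
\begin{equation*}
S^Q_{\lambda,d} \;=\; (-1)^{|\lambda|}\det\bigl([\zeta^{\,d-1-\lambda'_i+i}]\,\Delta_{d+j-1}(\zeta)\bigr)_{i,j=1}^{\lambda_1},
\end{equation*}
in which row $i$ extracts a fixed $\zeta$-power from each of the $\lambda_1$ column polynomials.

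The core of the argument is then the computation of $\Phi_n(\Delta_m(\zeta))$ as a rational expression in $\zeta$ with coefficients in $\mathbb{C}[Y_\gamma^\circ]$. Using the factorizations $L = UC_\gamma U^{-1} = RC_\gamma R^{-1}$ with $\varphi(C_\gamma) = U^{-1}R$ from Proposition~\ref{prop:RUexist}, the relation $\pmb{v}_- = \varphi\,\pmb{v}_+$ in $\mathscr{O}_\gamma^n$ established in its proof, and Lemma~\ref{lem:Y3Z3}, I will translate the principal $m$-minor $\Delta_m(\zeta)$ of $\zeta - L$ into a bracket determinant whose $\mathbf{c}$-columns are built from $\zeta^k\varphi$ and $\zeta^l$, normalized by products of $\tau$- and $\sigma$-factors arising from the Gauss decomposition (cf.\ Lemma~\ref{lem:tau-det}). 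Inserting this expression into the reduction above and collecting the row-indexed coefficient extractions from the $\lambda_1$ columns $\Phi_n(\Delta_{d+j-1}(\zeta))$ should recognize the result as a single $D$-determinant; the correspondence $i_a = \lambda_{d+1-a}+a$ arises because the row exponents $\{d-1-\lambda'_i+i\}$ together with the denominator exponents $\{0,1,\dots,n-d-1\}$ partition $\{0,1,\dots,n-1\}$, and the complementary positions become precisely the $\zeta$-shifts $\{i_a - d\}$ attached to the $\mathbf{c}(\zeta^{i_a - d}\varphi)$-columns in the numerator.

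The main obstacle is twofold. First, the similarity $L = UC_\gamma U^{-1}$ is not block-lower-triangular (since $U \in \pmb{N}_-\varepsilon$), so the principal $m\times m$ minor of $L$ does not restrict to a principal minor of $C_\gamma$ in any direct way; the resolution will proceed through the vectors $\pmb{v}_\pm$ and $\pmb{w}_\pm = B\pmb{v}_\pm$ from the proof of Proposition~\ref{prop:RUexist}, whose first $m$ entries encode cofactors of the upper-left block of $\zeta B - A$ and which satisfy $\pmb{v}_- = \varphi\,\pmb{v}_+$, converting principal minors of $\zeta - L$ into bracket determinants in the $(\zeta-1)^i$-basis. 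Second, the denominators picked up from the $\Phi_n$-images of the individual $\Delta_{d+j-1}(\zeta)$'s will naively be a product of factors, and reducing this to the single denominator $D(d-1,\dots,0) = \tau_d = g_{R_d}$ (Proposition~\ref{lemma:kappa}) will require careful column operations inside the final $\lambda_1 \times \lambda_1$ determinant, together with sign-bookkeeping and the correct powers of $c_0$. Once these steps are carried out, the remaining matching of row and column indices against $D(d-i_1,\dots,d-i_d)$ and $D(d-1,\dots,0)$ reduces to a direct re-indexing comparison.
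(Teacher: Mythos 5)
Your overall strategy is recognisably the paper's: identify the matrix entries $F^{(m)}_i$ of $S^Q_{\lambda,d}$ with data coming from the factorization $\varphi(C_\gamma)=U^{-1}R$ and then convert a minor of the unipotent factor into a ratio of bracket ($D$-type) determinants. However, there are two genuine problems.

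First, your starting identity is wrong: $\det\bigl((\zeta B-A)_{[m]}\bigr)\neq\det\bigl((\zeta\cdot 1-L)_{[m]}\bigr)$ for $m<n$, because taking a leading principal minor does not commute with forming the product $AB^{-1}$. Already for $m=1$ the left side is $\zeta-z_1$ while the right side is $\zeta-z_1(1-Q_1)$; your own tridiagonal recurrence (whose initial condition is forced to be $\Delta_1=\zeta-z_1$, not $\zeta-z_1(1-Q_1)$ as you write) produces for $m=2$ the coefficient $z_1(1-Q_1)+z_2$ rather than $F^{(2)}_1=z_1(1-Q_1)+z_2(1-Q_2)$. The correct statement, which is the paper's Lemma \ref{lemma:RtoF}, is that $\xi^{1,\ldots,m}_{1,\ldots,m}(\zeta\cdot 1-L)=\zeta^m+\sum_i(-1)^iF^{(m)}_i\zeta^{m-i}$, proved by comparing with $\zeta\cdot 1-UC_\gamma U^{-1}$; its coefficients are then literally the entries of row $m+1$ of $U$. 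This error is repairable, but as written your reduction of $S^Q_{\lambda,d}$ to a coefficient-extraction determinant rests on a false identity.

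Second, and more seriously, the step you describe as ``careful column operations'' is the entire content of the proof and is left unproved. Once each $F^{(m)}_i$ is expressed on the $Y$-side, each entry of your $s\times s$ determinant ($s=\ell(\lambda')$) is a ratio of minors of $\varphi(C_\gamma)$ whose denominator depends on the row/column; collapsing the determinant of such ratios to the single quotient $D(d-i_1,\ldots,d-i_d)/D(d-1,\ldots,0)$ is a nontrivial determinantal identity, essentially Jacobi's complementary-minor identity adapted to the Gauss decomposition. The paper sidesteps the entry-by-entry computation entirely: it observes that all the $F^{(m)}_i$ assemble into the single matrix $U$, so that $S^Q_{\lambda,d}=\xi^{j_1,\ldots,j_s}_{d+1,\ldots,d+s}(U\varepsilon)$ is one minor of the unipotent factor $N={}^t(U\varepsilon)$ of $X=YN^{-1}$, and then applies Lemma \ref{lem:GaussMinor} (Nakagawa--Noumi--Shirakawa--Yamada) once to rewrite that minor as $(-1)^{|\lambda|}\,\xi^{i_1,\ldots,i_d}_{1,\ldots,d}(X)/\xi^{1,\ldots,d}_{1,\ldots,d}(X)$, after which the bracket formalism gives the two $D$-determinants. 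Without invoking this lemma (or reproving the equivalent Pl\"ucker/Sylvester identity), your argument does not close; your correct observation about the complementary index sets $\{d-1-\lambda'_i+i\}$ and $\{i_a-d\}$ explains why the answer has the stated form but does not substitute for the identity itself.
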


\begin{remark}
Almost all the statements and proofs of this subsection make sense for arbitrary values of $\gamma$, which will be relevant when we discuss equivariant case (cf. \cite{lam2011double}). 
\end{remark}

\begin{lemma}\label{lemma:RtoF}
Let $L\in Z_\uni^\circ$, and $U$ be the matrix constructed in Proposition \ref{prop:RUexist}. 
The components $u_{ij}\;(i>j)$ of $U$ is equal to $(-1)^{j-1}F_{i-j}^{(i-1)}$. 
\end{lemma}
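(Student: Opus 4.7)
The plan is to read off $u_{ij}$ from the defining relation $U\pmb{v}_0 = \pmb{w}_{+} = B\pmb{v}_{+}$ established in the proof of Proposition~\ref{prop:RUexist}. Because $U \in \pmb{N}_{\!-}\varepsilon$ is lower triangular with $u_{ii} = (-1)^{i-1}$ and $u_{ij} = 0$ for $j > i$, comparing the $i$th coordinates yields the polynomial identity
\begin{equation*}
\sum_{j=1}^{i} u_{ij}\,\zeta^{j-1} \;=\; (-1)^{i-1}\bigl(\Delta_{n,i}(\zeta) + Q_{i-1}z_{i-1}\Delta_{n,i-1}(\zeta)\bigr),
\end{equation*}
the second summand on the right being absent when $i = 1$. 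Setting $\widetilde\Delta_{n,i}(\zeta) := \Delta_{n,i}(\zeta) + Q_{i-1}z_{i-1}\Delta_{n,i-1}(\zeta)$, the lemma reduces to the polynomial identity
\begin{equation*}
\widetilde\Delta_{n,i}(\zeta) \;=\; \sum_{k=0}^{i-1}(-1)^k F_k^{(i-1)}\,\zeta^{i-1-k}.
\end{equation*}

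Deleting row $n$ and column $i$ from the tridiagonal matrix $\zeta B - A$ produces a block-triangular $(n-1)\times(n-1)$ matrix whose top-right block vanishes (rows $1,\dots,i-1$ have no nonzero entries past column $i$), whose bottom-right block is lower-unitriangular, and whose top-left block equals the principal submatrix $\zeta B^{(i-1)} - A^{(i-1)}$ of size $i-1$. Hence $\Delta_{n,i}(\zeta) = \det(\zeta B^{(i-1)} - A^{(i-1)})$, which is the characteristic polynomial of the truncated Lax matrix $L^{(i-1)} := A^{(i-1)}(B^{(i-1)})^{-1}$. Writing $\overline{F}_k^{(m)} := F_k^{(m)}|_{Q_m = 0}$, the formula for $F_i$ applied to this size-$(i-1)$ system (with the convention $Q_n = 0$ becoming $Q_{i-1} = 0$) gives
\begin{equation*}
\Delta_{n,i}(\zeta) \;=\; \sum_{k=0}^{i-1}(-1)^{k}\,\overline{F}_k^{(i-1)}\,\zeta^{i-1-k}.
\end{equation*}

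The crux is the combinatorial identity
\begin{equation*}
F_k^{(m)} \;=\; \overline{F}_k^{(m)} \,-\, Q_m z_m\, \overline{F}_{k-1}^{(m-1)},
\end{equation*}
which I would prove by noting that $F_k^{(m)}$ is linear in $Q_m$ (the variable $Q_m$ enters only through the factor $(1 - Q_m)$ arising when $m \in I$). Splitting the defining sum by whether $m \in I$ and extracting the coefficient of $-Q_m$ produces $z_m$ times a sum over subsets $I' = I \setminus \{m\} \subset \{1,\dots,m-1\}$ of size $k-1$ in which the factor $(1 - Q_{m-1})$ is \emph{omitted} whenever $m-1 \in I'$ (since the condition $j + 1 \notin I$ fails at $j = m - 1$ when $m \in I$). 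Direct comparison with the defining formula shows this inner sum equals $\overline{F}_{k-1}^{(m-1)}$.

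Substituting the identity with $m = i - 1$ into $\widetilde\Delta_{n,i}(\zeta) = \Delta_{n,i}(\zeta) + Q_{i-1} z_{i-1} \Delta_{n,i-1}(\zeta)$, the correction term $Q_{i-1} z_{i-1} \Delta_{n,i-1}$ supplies precisely the missing $-Q_{i-1} z_{i-1} \overline{F}_{k-1}^{(i-2)}$ terms that upgrade each $\overline{F}_k^{(i-1)}$ to $F_k^{(i-1)}$, yielding the required expansion of $\widetilde\Delta_{n,i}(\zeta)$. Comparing coefficients of $\zeta^{j-1}$ with the opening display then gives $u_{ij} = (-1)^{j-1} F_{i-j}^{(i-1)}$. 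The main difficulty is the combinatorial identity above: one must carefully track which $(1 - Q_j)$ factors survive as one partitions the index set and specializes $Q_m \to 0$.
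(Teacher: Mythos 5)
Your proof is correct, and it takes a genuinely different route from the paper's. The paper computes the leading principal minors of $\zeta\cdot 1-L$ in two ways: writing $L=UC_{\uni}U^{-1}$ and using $(\mathrm{Z}_1)$ together with the shape of $U\in\pmb{N}_{\!-}\varepsilon$, the $i\times i$ principal minor is $\zeta^i+(-1)^i\sum_{j=1}^i u_{i+1,i-j+1}\zeta^{i-j}$, while a direct computation identifies the same minor with $\zeta^i+\sum_{j=1}^i(-1)^jF_j^{(i)}\zeta^{i-j}$; matching coefficients gives the rows of $U$. You instead read the $i$th row of $U$ off the defining relation $U\pmb{v}_0=B\pmb{v}_{+}$ from the proof of Proposition \ref{prop:RUexist} (this is legitimate: the $i$th entry of $B\pmb{v}_{+}$ has degree $i-1$, so the identity holds in $\mathbb{C}[\zeta]$ and determines the row), reducing the lemma to the polynomial identity $\Delta_{n,i}+Q_{i-1}z_{i-1}\Delta_{n,i-1}=\sum_{k}(-1)^kF_k^{(i-1)}\zeta^{i-1-k}$. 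Your two ingredients both check out: the block-triangularity argument correctly gives $\Delta_{n,i}=\det(\zeta B^{(i-1)}-A^{(i-1)})$, whose expansion involves $\overline{F}_k^{(i-1)}$ because the entry $-Q_{i-1}z_{i-1}$ of $B$ sits outside the leading $(i-1)\times(i-1)$ block, and the identity $F_k^{(m)}=\overline{F}_k^{(m)}-Q_mz_m\overline{F}_{k-1}^{(m-1)}$ follows exactly as you say from linearity in $Q_m$ and the failure of the condition $j+1\notin I$ at $j=m-1$ when $m\in I$. A virtue of your route is that it makes explicit where the factor $(1-Q_{i-1})$ inside $F_k^{(i-1)}$ comes from (it is restored by the correction term $Q_{i-1}z_{i-1}\Delta_{n,i-1}$ contributed by the subdiagonal of $B$), a point the paper's principal-minor computation leaves implicit in the asserted formula for $\xi^{1,\ldots,i}_{1,\ldots,i}(\zeta\cdot 1-L)$.
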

\begin{proof} 
We have $UC_\uni U^{-1}=L.$
Let us compare both hand sides of principal minors of 
$\zeta \cdot 1-UC_\uni U^{-1}=\zeta \cdot 1-L.$
In view of the facts that
 $L$ satisfies $(\mathrm{Z}_1)$,
and $U\in \pmb{N}_{\!-}\varepsilon,$ one can show that 
$$\xi_{1,\ldots,i}^{1,\ldots,i}(\zeta \cdot 1-L)
=\zeta^i+(-1)^i\sum_{j=1}^i u_{i+1,i-j+1}\zeta^{i-j}\quad (1\leq i\leq n-1),
$$
On the other hand, we have 
$\xi_{1,\ldots,i}^{1,\ldots,i}(\zeta \cdot 1-L)=\zeta^i+\sum_{j=1}^i(-1)^jF_j^{(i)}\zeta^{i-j}.$
Thus the lemma is proved.
\end{proof}

\begin{lemma}[\cite{Noumi9th}]\label{lem:GaussMinor}
Let $\lambda$ and $ (i_1,\ldots,i_d)$ be as Proposition \ref{prop:determinant2}. 
Let $s=\ell(\lambda')$, where $\lambda'$ is the conjugate partition of $\lambda$.
We define the increasing sequence $(j_1,\ldots,j_{s})$ by the 
condition
$$\{i_1,\ldots,i_d\}\cup\{j_1,\ldots,j_{s}\}
=\{1,2,\ldots,d+s\}.$$
Suppose a matrix $X$
is decomposed as $X=Y\cdot N^{-1},\;
Y\in \pmb{B}_{\!-},\;
N\in \pmb{N}.$
Then we have the expression 
\begin{equation}
\xi^{d+1,\ldots,d+s}_{j_1,\ldots,j_{s}}(N)=(-1)^{|\lambda|}\cdot\frac{\xi_{1,\ldots,d}^{i_1,\ldots,i_d}(X)}{\xi_{1,\ldots,d}^{1,\ldots,d}(X)}.\label{eq:GaussMinor}
\end{equation}
\end{lemma}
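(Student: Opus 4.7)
The plan is to combine a Cauchy--Binet expansion with Jacobi's classical complementary minor identity. First, since $Y \in \pmb{B}_{\!-}$ is lower triangular, applying Cauchy--Binet to $X = Y \cdot N^{-1}$ gives, for any $d$-element column set $I$,
$$\xi_{1,\ldots,d}^{I}(X) = \sum_{K} \xi_{1,\ldots,d}^{K}(Y)\cdot \xi_{K}^{I}(N^{-1}),$$
and the sum collapses to the single term $K=\{1,\ldots,d\}$ because any other top $d$-row minor of $Y$ vanishes. Dividing the $I=\{i_1,\ldots,i_d\}$ case by the $I=\{1,\ldots,d\}$ case, and using $\xi_{1,\ldots,d}^{1,\ldots,d}(N^{-1})=1$ (as $N$ is unitriangular), I obtain
$$\frac{\xi_{1,\ldots,d}^{i_1,\ldots,i_d}(X)}{\xi_{1,\ldots,d}^{1,\ldots,d}(X)} = \xi_{1,\ldots,d}^{i_1,\ldots,i_d}(N^{-1}).$$

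Next I would invoke Jacobi's complementary minor identity for invertible matrices, which states that for equinumerous index sets $I,J \subset \{1,\ldots,d+s\}$,
$$\xi_{I}^{J}(N^{-1}) = \frac{(-1)^{\sum I + \sum J}}{\det N}\cdot \xi_{J^c}^{I^c}(N).$$
Specializing to $I=\{1,\ldots,d\}$ and $J=\{i_1,\ldots,i_d\}$, the complements in $\{1,\ldots,d+s\}$ are exactly $I^c = \{d+1,\ldots,d+s\}$ and $J^c = \{j_1,\ldots,j_s\}$, and $\det N = 1$ because $N \in \pmb{N}$. This yields
$$\xi_{1,\ldots,d}^{i_1,\ldots,i_d}(N^{-1}) = (-1)^{d(d+1)/2 + i_1 + \cdots + i_d}\cdot \xi_{j_1,\ldots,j_s}^{d+1,\ldots,d+s}(N).$$

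Finally I would simplify the sign. From $i_a = \lambda_{d+1-a} + a$ one has $\sum_{a=1}^{d} i_a = |\lambda| + d(d+1)/2$, so the exponent of $-1$ equals $d(d+1) + |\lambda|$, which has the same parity as $|\lambda|$. Combining this with the Cauchy--Binet reduction above gives exactly the claimed formula \eqref{eq:GaussMinor}. The only genuine obstacle in this approach is the careful verification of Jacobi's identity with the right sign, which is classical (and covered in the cited reference \cite{Noumi9th}); the rest is essentially bookkeeping enabled by the Gauss decomposition hypothesis $X = Y \cdot N^{-1}$ and the definition of the index sequences $(i_a)$ and $(j_b)$.
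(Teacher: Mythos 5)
Your overall plan — Cauchy--Binet to collapse the decomposition $X=Y\cdot N^{-1}$, then Jacobi's complementary-minor identity applied to $N$ — is a legitimate direct derivation, and since the paper only cites \cite{Noumi9th} (Theorem 1.1) rather than arguing in detail, yours is a reasonable reconstruction of the underlying argument.

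However, there is a gap in the Jacobi step. The matrices $X, Y, N$ lie in $GL_n(\mathbb{C})$, and in general $d+s<n$ (indeed $s=\lambda_1\le n-d$, with equality not required). Jacobi's complementary-minor identity
$\xi_{I}^{J}(N^{-1})=(-1)^{\sum I+\sum J}\xi_{J^c}^{I^c}(N)/\det N$
must take complements $I^c,J^c$ inside the full index set $\{1,\dots,n\}$, not inside $\{1,\dots,d+s\}$ as you wrote. For $I=\{1,\dots,d\}$, $J=\{i_1,\dots,i_d\}$, the true complements are $I^c=\{d+1,\dots,n\}$ and $J^c=\{j_1,\dots,j_s\}\cup\{d+s+1,\dots,n\}$, so Jacobi directly produces the larger $(n-d)$-size minor $\xi_{j_1,\dots,j_s,\,d+s+1,\dots,n}^{\,d+1,\dots,n}(N)$, not $\xi_{j_1,\dots,j_s}^{d+1,\dots,d+s}(N)$. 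Fortunately this minor collapses to the desired one: writing it in $2\times 2$ block form with column blocks $\{d+1,\dots,d+s\}$ and $\{d+s+1,\dots,n\}$ and row blocks $\{j_1,\dots,j_s\}$ and $\{d+s+1,\dots,n\}$, the lower-left block vanishes (upper triangularity of $N$, since all $j_b\le d+s$) and the lower-right block is a unitriangular submatrix of $N$ with determinant $1$. Alternatively, one may first restrict to the leading principal $(d+s)\times(d+s)$ block $N'$ of $N$ (noting that, for block upper-triangular $N$, the top-left block of $N^{-1}$ is $N'^{-1}$), and apply Jacobi to $N'$. Either patch makes your sign computation $\sum I+\sum J = d(d+1)+|\lambda|\equiv |\lambda|\pmod 2$ carry through and yields \eqref{eq:GaussMinor}; without it, the identity as you invoked it does not apply to the $n\times n$ matrix $N$.
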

\begin{proof}See \cite{Noumi9th}, Theorem 1.1 and its proof.
\end{proof}

Now we prove Proposition \ref{prop:determinant2}.
\begin{proof}
Let $\{j_1,\ldots,j_s\}$ as in Lemma \ref{lem:GaussMinor}.
From Lemma \ref{lemma:RtoF}, we see that  
\begin{equation}
{S}^{Q}_{\lambda,d}=
\xi_{d+1,\ldots,d+s}^{j_1,\ldots,j_{s}}(U\varepsilon).\label{eq:S=xi}
\end{equation}

We apply Lemma \ref{lem:GaussMinor} as follows.
Recall that we have the decomposition $\varphi(C_\uni)=U^{-1}R$
with $R\in \pmb{B}\sigma$ and $U\in \pmb{N}_{\!-} \varepsilon.$
Let $Y={}^t(R \sigma^{-1}),\;N={}^t(U\varepsilon),\;X=\sigma\cdot {}^t\varphi(C_\uni)\,\varepsilon.$
If we choose $\pmb{c}: \mathscr{O}_\uni\rightarrow \mathbb{C}^n$ as $\sum_{i=0}^{n-1}\alpha_i\zeta^i \mod (\zeta-1)^n
\mapsto {}^t(\alpha_{n-1},\alpha_0,\ldots,\alpha_{n-2}),$ then
we have 
$$
\sigma\cdot{}^t\varphi(C_\uni)=(\pmb{b}_0,\pmb{b}_1,\ldots,\pmb{b}_{n-1}).
$$
Now we have
\begin{align}
\xi_{1,\ldots,d}^{i_1,\ldots,i_d}(X)
&=(-1)^{\sum_{a=1}^d(i_a-1)}|\pmb{b}_{i_1-1},\ldots,\pmb{b}_{i_d-1},\pmb{a}_{d-1},\ldots \pmb{a}_{n-2}|\nonumber\\
&=(-1)^{\sum_{a=1}^d(i_a-1)}|\pmb{c}(\zeta^{i_1-1}\varphi),\ldots,\pmb{c}(\zeta^{i_d-1}\varphi),\pmb{c}(\zeta^{d-1}),\ldots \pmb{c}(\zeta^{n-2})|\nonumber\\
&=(-1)^{\sum_{a=1}^d(i_a-1)}|\pmb{c}(\zeta^{i_1-d}\varphi),\ldots,\pmb{c}(\zeta^{i_d-d}\varphi),\pmb{c}(1),\ldots ,\pmb{c}(\zeta^{n-d-1})|\nonumber\\
&=(-1)^{|\lambda|+d(d-1)/2}D(d-i_1,\ldots,d-i_d)c_0^d,\label{eq:xi=D}
\end{align}
where we used $\sum_{a=1}^di_a=|\lambda|+d(d+1)/2.$ 
Since we have
$$\xi_{d+1,\ldots,d+s}^{j_1,\ldots,j_{s}}(U\varepsilon)
=\xi^{d+1,\ldots,d+s}_{j_1,\ldots,j_{s}}(N),
$$
formula (\ref{eq:determinant2}) follows from (\ref{eq:GaussMinor}), (\ref{eq:S=xi}), and (\ref{eq:xi=D}).
\end{proof}

\begin{remark}\label{rem:GrassPerm} The $d$-Grassmannian permutation $w_{\lambda,d}\in S_n$ is given by 
$w_{\lambda,d}(a)=i_a\;(1\leq a\leq d)$ and $w_{\lambda,d}(d+a)=j_a\;(1\leq a\leq s)$, 
and $w_{\lambda,d}(a)=a\;(a>d+s).$
\end{remark}

\subsection{Calculation of $\kappa_d(s_\lambda)^\perp\cdot g_{R_d}$}

In view of Proposition \ref{prop:determinant2}, Proposition \ref{prop:PhiF}
is reduced to the following.
\begin{proposition}\label{prop:numer}
We have
\begin{equation}\label{eq:concl}
\kappa_d(s_\lambda)^\perp\cdot g_{R_d}=D(d-i_1,d-i_2,\ldots,d-i_d).
\end{equation} 
\end{proposition}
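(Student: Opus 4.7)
The plan is to exploit the especially simple coproduct structure of $g_{R_d}$ arising from Proposition~\ref{prop:LR}, thereby converting the statement into a coefficient matching in the $\{g_\nu\}$-basis of $\Lambda_{(n)}$.

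By Hopf duality between the dual bases $\{g_\lambda\}$ and $\{G_\mu\}$, the coefficients of $\Delta g_{R_d}$ in the $\{g_\mu\otimes g_\nu\}$-basis equal $(-1)^{|R_d|-|\mu|-|\nu|}c^{R_d}_{\mu,\nu}$. Proposition~\ref{prop:LR} gives $c^{R_d}_{\mu,\nu}=\delta_{\nu,\mu^\vee}$ for $\mu\subset R_d$, and these constants vanish for $\mu\not\subset R_d$ since then $G_\mu\in J_d$ and $J_d$ is an ideal (by Proposition~\ref{prop:KGr}). Because $|\mu|+|\mu^\vee|=|R_d|$, the sign is $+1$, and I obtain $\Delta g_{R_d}=\sum_{\mu\subset R_d}g_\mu\otimes g_{\mu^\vee}$. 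Using the standard identity $f^\perp g=\sum\langle f,g_{(1)}\rangle g_{(2)}$ in Sweedler notation, this yields
\begin{equation*}
\kappa_d(s_\lambda)^\perp g_{R_d}=\sum_{\mu\subset R_d}\langle\kappa_d(s_\lambda),g_\mu\rangle\cdot g_{\mu^\vee}.
\end{equation*}

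On the other side, I would expand $D(d-i_1,\ldots,d-i_d)$ in the same basis. Iterating the generalized recursion (\ref{eq:useful}) of Lemma~\ref{lemma:D}, each (possibly negative) entry $d-i_j=(d-j)-\lambda_{d+1-j}$ is shifted up to the base value $d-j$. After antisymmetrisation via Lemma~\ref{lemma:D}(1), this writes $D(d-i_1,\ldots,d-i_d)$ as an explicit signed combination of brackets $D\Array{d-1\,\ldots\,0\\a_1\,\ldots\,a_d}$ with strictly decreasing nonnegative $a_j$'s, each identified with $g_{\mu^\vee}$ for some $\mu\subset R_d$ via Lemma~\ref{lemma:g}; the coefficients $C_{\lambda,\mu}$ of $g_{\mu^\vee}$ that appear are products of binomial numbers determined by the parts of $\lambda$.

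The proposition then reduces to proving $\langle\kappa_d(s_\lambda),g_\mu\rangle=C_{\lambda,\mu}$ for all $\mu\subset R_d$. I would attack this by inducting on $|\lambda|$: the base case $\lambda=\emptyset$ is Proposition~\ref{lemma:kappa}, while the inductive step combines a Pieri-type expansion for $s_\lambda$ with the single-entry move $D(\theta_j)=D(\theta_j-1)+D(\theta_j,a_j+1)$ of Lemma~\ref{lemma:D}(2). Alternatively, one can combine the Jacobi--Trudi determinant $s_\lambda=\det(h_{\lambda_i+j-i})$ with the generating-function description $\sum_k\kappa_d(h_k)t^k=(1-t)^{-d}\prod_{j\leq d}(1+\tfrac{x_jt}{1-t})^{-1}\prod_{j>d}(1-x_jt)^{-1}$ to produce the required binomial identity directly. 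The main obstacle is this combinatorial bookkeeping, because $\kappa_d$ is not self-adjoint for the Hall pairing (as one checks on small examples), so $\langle\kappa_d(s_\lambda),g_\mu\rangle$ must be unfolded explicitly rather than transported across. A useful organizing principle is the observation that, under the identification $x_j\leftrightarrow 1/(1-\zeta_j)$ of symmetric-function variables with inverse-root variables on $\ouni$, the involution $\kappa_d$ corresponds to $\zeta\mapsto\zeta^{-1}$ on $\ouni$, making the entries $\zeta^{i_j-d}$ on the right-hand side the natural $\kappa_d$-images of the building blocks $\zeta^{j-1}$ in the representation $g_{R_d}=[\zeta^{-(d-1)},\zeta^{-(d-2)},\ldots,1]$ coming out of Proposition~\ref{lemma:kappa}.
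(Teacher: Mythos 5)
Your opening reduction is correct and is genuinely different from the paper's route: self-duality of the Hall pairing plus Proposition~\ref{prop:LR} (and the ideal property of $J_d$) does give $\Delta g_{R_d}=\sum_{\mu\subset R_d}g_\mu\otimes g_{\mu^\vee}$, hence
\begin{equation*}
\kappa_d(s_\lambda)^\perp g_{R_d}=\sum_{\mu\subset R_d}\langle\kappa_d(s_\lambda),g_\mu\rangle\, g_{\mu^\vee}.
\end{equation*}
This is an attractive starting point that the paper does not use. However, everything after that is a plan rather than a proof, and one step as stated is wrong. Iterating (\ref{eq:useful}) to shift each entry $d-i_j$ up to $d-j$ produces brackets $D\Array{d-1\,\dots\,0\\m_1\,\dots\,m_d}$ whose bottom rows $(m_1,\dots,m_d)$ are arbitrary, and Lemma~\ref{lemma:D}(1) only lets you permute whole columns, i.e.\ pairs $(\theta_j,a_j)$; since the top row $(d-1,\dots,0)$ is not constant, you cannot sort the bottom row alone "by antisymmetrisation" to land in the strictly decreasing configurations that Lemma~\ref{lemma:g} identifies with $g_{\mu^\vee}$. (Compare the paper's Lemma~\ref{lemma:appendix}, which first pushes everything to the symmetric top row $(0,\dots,0)$ precisely so that antisymmetrisation becomes legitimate, at the cost of landing in the Schur basis and needing the Lindstr\"om--Gessel--Viennot argument.) So your coefficients $C_{\lambda,\mu}$ are not yet defined, and the core identity $\langle\kappa_d(s_\lambda),g_\mu\rangle=C_{\lambda,\mu}$ — which is essentially the whole content of the proposition — is left to an unexecuted induction or generating-function computation whose difficulty you yourself flag.

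For contrast, the paper avoids all of this bookkeeping: Lemma~\ref{lemma:4.2} shows that $\kappa_d(p_i)^\perp$ acts on the brackets by $\theta_j\mapsto\theta_j-i$ in each slot, which under the map $\iota$ is exactly the projected Heisenberg action $\overline{\alpha_{-i}}$ on the Fermion--Fock space; since $s_\lambda\cdot\Omega=\phi^{-1}(s_\lambda)$ is a \emph{single} wedge $v_{\lambda_1}\wedge v_{\lambda_2-1}\wedge\cdots$, applying $\kappa_d(s_\lambda)^\perp$ to $g_{R_d}=\iota(\Omega)$ yields $D(d-i_1,\dots,d-i_d)$ in one stroke, with no basis expansion needed. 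If you want to salvage your route, the realistic options are (a) expand both sides in the Schur basis via the all-zero-top-row brackets and prove the resulting binomial/LGV identity, or (b) prove a Pieri-type rule for $\kappa_d(h_r)^\perp$ acting on the brackets — but either way the heavy lifting remains to be done, so as it stands the argument has a genuine gap.
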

In the rest of the paper, we will concentrate on the proof of Proposition \ref{prop:numer}.

\subsubsection{Actions of $\kappa_d(p_i)^\perp$}
\begin{lemma}\label{lemma:4.2} We have 
\begin{equation}
\kappa_d(p_i)^\perp \cdot
  D\Array{\theta_1\,\dots\,\theta_d\\a_1\,\dots\,a_d}
=
\displaystyle\sum_{j=1}^{d}
  D\Array{\theta_1\,\dots\,\theta_j-i\,\dots\,\theta_d\\a_1\,\dots\,a_j\,\dots\,a_d}.
  \label{eq:p[k]perp}
\end{equation}
\end{lemma}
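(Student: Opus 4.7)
The plan is to compute $\kappa_d(p_i)^\perp$ by expanding it into power-sum operators and applying the Leibniz rule to the determinant defining $D\Array{\theta_1\,\dots\,\theta_d\\a_1\,\dots\,a_d}$. The key starting observation is that for every $k\geq 1$ the operator $p_k^\perp$ acts on $\varphi=c_0\sum_{j=0}^{n-1}h_j(1-\zeta)^j$ as multiplication by $(1-\zeta)^k$ inside $\mathscr{O}_{\mathrm{uni}}$. Indeed, $p_k^\perp$ is the derivation on $\Lambda$ sending $h_j\mapsto h_{j-k}$, a standard consequence of the identity $\sum_j h_j t^j=\exp(\sum_{i\geq 1}p_i t^i/i)$; the prefactor $c_0$, being the constant $1$ under the identification $h_j=c_j/c_0$, is annihilated. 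The shifted sum $c_0\sum_j h_{j-k}(1-\zeta)^j$ then coincides with $(1-\zeta)^k\varphi$ once terms of degree $\geq n$ in $(1-\zeta)$ are discarded modulo $(1-\zeta)^n$.

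Next I would apply this to the determinantal expression
$$[f_1,\dots,f_d](\varphi)=c_0^{-d}\,\bigl|\mathbf{c}(f_1\varphi),\dots,\mathbf{c}(f_d\varphi),\mathbf{c}(1),\dots,\mathbf{c}(\zeta^{n-d-1})\bigr|.$$
Since $p_k^\perp$ is a derivation, since the last $n-d$ columns depend only on $\zeta$ and are therefore annihilated, and since $p_k^\perp$ commutes with multiplication by the $\zeta$-polynomial $f_j$, the Leibniz rule on the determinant yields
\begin{equation*}
p_k^\perp\, D\Array{\theta_1\,\dots\,\theta_d\\a_1\,\dots\,a_d}=\sum_{j=1}^{d}D\Array{\theta_1\,\dots\,\theta_j\,\dots\,\theta_d\\a_1\,\dots\,a_j+k\,\dots\,a_d}\qquad(k\geq 1).
\end{equation*}

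To conclude, I would expand $\kappa_d(p_i)^\perp=d\cdot\mathrm{id}+\sum_{k=1}^{i}(-1)^k\binom{i}{k}p_k^\perp$ using (\ref{eq:tildep}), noting that the adjoint of multiplication by the scalar $d$ is again multiplication by $d$. Substituting the formula above and absorbing the $d$-term into the $k=0$ slot gives
$$\kappa_d(p_i)^\perp \,D\Array{\theta_1\,\dots\,\theta_d\\a_1\,\dots\,a_d}=\sum_{j=1}^{d}\sum_{k=0}^{i}(-1)^{k}\binom{i}{k}D\Array{\theta_1\,\dots\,\theta_j\,\dots\,\theta_d\\a_1\,\dots\,a_j+k\,\dots\,a_d}.$$
The inner alternating sum in $k$ collapses by identity (\ref{eq:useful}), applied with $p=i$ and with $\theta$ replaced by $\theta_j-i$ in the $j$-th slot, to the single term $D\Array{\theta_1\,\dots\,\theta_j-i\,\dots\,\theta_d\\a_1\,\dots\,a_j\,\dots\,a_d}$, which is exactly the right-hand side of the lemma.

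The only step requiring genuine care is the first: verifying that $p_k^\perp\varphi=(1-\zeta)^k\varphi$ holds in $\mathscr{O}_{\mathrm{uni}}$ despite the truncation, and confirming that the normalizing prefactor $c_0^{-d}$ does not interfere with the Leibniz rule (which is immediate from $p_k^\perp c_0=0$). The determinantal manipulation is then routine, and the final collapse is a direct invocation of the binomial identity (\ref{eq:useful}) already recorded in the paper.
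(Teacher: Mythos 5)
Your proof is correct and follows essentially the same route as the paper: establish $p_k^\perp\pmb{c}((1-\zeta)^{a}\zeta^{-\theta}\varphi)=\pmb{c}((1-\zeta)^{a+k}\zeta^{-\theta}\varphi)$ from $p_k^\perp h_j=h_{j-k}$, propagate it through the determinant by the Leibniz rule (the $\zeta$-only columns and the $c_0^{-d}$ prefactor being annihilated), and then collapse the resulting alternating sum over $k=0,\dots,i$ coming from the expansion of $\kappa_d(p_i)$ via the binomial identity (\ref{eq:useful}). The paper's proof is the same argument, merely stated directly at the level of the column vectors $\pmb{c}((1-\zeta)^a\zeta^{-\theta}\varphi)$ rather than first isolating the action on $\varphi$.
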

\begin{proof}
We first show
\begin{equation}
p_i^\perp \cdot
  D\Array{\theta_1\,\dots\,\theta_d\\a_1\,\dots\,a_d}
=\displaystyle\sum_{j=1}^{d}
  D\Array{\theta_1\,\dots\,\theta_j\,\dots\,\theta_d\\a_1\,\dots\,a_j+i\,\dots\,a_d}.
  \label{eq:pperp}
\end{equation}
As $p_i^\perp h_j=h_{j-i}$ (\cite{macdonald1998symmetric}, Chap. I, 5, Example 3), the action of $p_i^\perp$ on the column vector 
$$
\pmb{c}((1-\zeta)^{c}\zeta^{-\theta}\varphi)=c_0\cdot \pmb{c}
\left(
(1-\zeta)^{c}\zeta^{-\theta}(\sum_{i=0}^{n-1}h_i(1-\zeta)^i
)
\right)
$$
is expressed as 
$$
p_i^\perp\cdot \pmb{c}((1-\zeta)^{c}\zeta^{-\theta}\varphi)=\pmb{c}((1-\zeta)^{a+i}\zeta^{-\theta}\varphi).
$$
This relation and the `Leibniz rule' $p_i^\perp (fg)=(p_i^\perp f)g+f(p_i^\perp g)$ imply the desired equation.
(\ref{eq:p[k]perp})
 is obtained from (\ref{eq:pperp}) by using (\ref{eq:useful}) as follows:
\begin{align*}
\kappa_d(p_i)^\perp\cdot D\Array{\theta_1\,\dots\,\theta_d\\a_1\,\dots\,a_d}
&=
\displaystyle\sum_{j=1}^{d}
\sum_{m=0}^{i}(-1)^m{i\choose m}
  D\Array{\theta_1\,\dots\,\theta_j\,\dots\,\theta_d\\a_1\,\dots\,a_j+m\,\dots\,a_d}\\
&
=
\displaystyle\sum_{j=1}^{d}
  D\Array{\theta_1\,\dots\,\theta_j-i\,\dots\,\theta_d\\a_1\,\dots\,a_j\,\dots\,a_d}.
\end{align*}
\end{proof}

\subsubsection{Boson-Fermion correspondence}
To prove Proposition \ref{prop:numer}, we use the {\it Boson-Fermion correspondence}.
Here we review some basic facts about it without proof.
For details, see \cite{kac1988bombay,miwa2000solitons}.

Let $\mathscr{M}:=\{M=(m_0,m_1,m_2,\ldots)\,\vert\,m_0>m_1>\cdots ,\ \ m_j=-j\ (j\gg 1)\}$.
Let $v_m$ be infinitely many linearly independent vectors indexed by $m\in \mathbb{Z}.$
Let 
$$
\mathscr{F}=\bigoplus_{M\in \mathscr{M}}\CC v_M,\quad 
v_M:=v_{m_0}\wedge v_{m_1}\wedge\cdots
$$
be the {\it Fermion-Fock space}.
The vector $\Omega:=v_0\wedge v_{-1}\wedge v_{-2}\wedge \cdots$ is called the {\it vacuum vector} of $\mathscr{F}$.
For $m\in \mathbb{Z},\;m\neq 0$, define $\alpha_{m}\in \mathrm{End}_{\mathbb{C}}\mathscr{F}$ by the formula:
\[
\alpha_m(v_{m_0}\wedge v_{m_1}\wedge\cdots)=\sum_{j=0}^\infty
v_{m_0}\wedge\dots\wedge v_{m_{j-1}}\wedge v_{m_{j}-m}\wedge v_{m_{j+1}}\wedge\cdots.
\]
Then we have the {\it Heisenberg relation\/}:
$$
[\alpha_m,\alpha_n]=m\delta_{m+n,0}.
$$
There uniquely exists a linear isomorphism $\phi:\mathscr{F}\to \Lambda$ with the following properties:
\[
\phi(\Omega)=1,\quad
\phi(\alpha_{-m}v)=p_m\phi(v),\quad \phi(\alpha_mv)=p_m^\perp\phi(v),\quad m\geq 1,\quad v\in \mathscr{F}.
\]
\begin{proposition}$($see~\cite[\S 9.3]{miwa2000solitons},~\cite[\S 6]{kac1988bombay}$)$. \label{prop:facts} We have
$
\phi(v_{m_0}\wedge v_{m_1}\wedge\cdots)=s_\lambda
$,
where $\lambda=(m_0,m_1+1,m_2+2,\dots)$ considered as a partition.
\end{proposition}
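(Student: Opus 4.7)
The plan is to combine Lemma~\ref{lemma:4.2} with a fermionic interpretation of the $D$-symbols to reduce the identity to a single clean calculation. Let $\mathcal{F}^{(d)}$ be the $d$-fold wedge space spanned by $v_{\theta_1}\wedge\cdots\wedge v_{\theta_d}$ (antisymmetric in the $\theta$'s), and define the operator $\alpha_i^{(d)}$ on it by
\[
\alpha_i^{(d)}(v_{\theta_1}\wedge\cdots\wedge v_{\theta_d})=\sum_{j=1}^d v_{\theta_1}\wedge\cdots\wedge v_{\theta_j-i}\wedge\cdots\wedge v_{\theta_d}.
\]
Setting $\Psi_d(v_{\theta_1}\wedge\cdots\wedge v_{\theta_d}):=D(\theta_1,\ldots,\theta_d)$, Lemma~\ref{lemma:4.2} is precisely the statement that $\Psi_d\circ\alpha_i^{(d)}=\kappa_d(p_i)^\perp\circ\Psi_d$. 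Since both $f\mapsto\kappa_d(f)^\perp$ and $f\mapsto f(\alpha^{(d)})$ are ring homomorphisms from $\Lambda$ into commuting families of operators, and they agree on the generators $p_i$ of $\Lambda$, they agree on every $f\in\Lambda$, in particular on $f=s_\lambda$. With $W:=v_{d-1}\wedge v_{d-2}\wedge\cdots\wedge v_0$ so that $\Psi_d(W)=g_{R_d}$ by Proposition~\ref{lemma:kappa}, the claim reduces to the fermionic identity
\[
s_\lambda(\alpha^{(d)})\cdot W=v_{d-i_1}\wedge v_{d-i_2}\wedge\cdots\wedge v_{d-i_d}.\quad(\star)
\]

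To prove $(\star)$, I would identify $\mathcal{F}^{(d)}$ with the space of antisymmetric Laurent polynomials in $x_1,\ldots,x_d$ via $v_{\theta_1}\wedge\cdots\wedge v_{\theta_d}\leftrightarrow a_\theta(x):=\det(x_j^{\theta_i})$. Expanding the determinant shows that under this identification $\alpha_i^{(d)}$ corresponds to multiplication by $p_i(x^{-1})=\sum_{j=1}^d x_j^{-i}$, and therefore $s_\lambda(\alpha^{(d)})$ corresponds to multiplication by $s_\lambda(x^{-1})$. The Weyl character formula $s_\lambda(y)\cdot a_\delta(y)=a_{\lambda+\delta}(y)$ applied at $y_j=x_j^{-1}$ with $\delta=(d-1,d-2,\ldots,0)$, together with the manipulation $\det(x_j^{-\mu_i})=(-1)^{d(d-1)/2}\prod_j x_j^{-(d-1)}\det(x_j^{(d-1)-\mu_i})$ used to rewrite both the numerator and denominator bialternants in standard form, yields $s_\lambda(x^{-1})\cdot a_\delta(x)=a_{(d-i_1,d-i_2,\ldots,d-i_d)}(x)$, which is exactly $(\star)$ in the bialternant picture.

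The main obstacle will be the sign and index bookkeeping in this last step: after the substitution $y_j=x_j^{-1}$ one obtains a ratio of determinants with negative exponents, and to recognize the result as $a_{(d-i_1,\ldots,d-i_d)}$ one must pull out a monomial factor $\prod_j x_j^{-(d-1)}$ from both pieces, reverse row orderings so that the exponent sequences become strictly decreasing (producing two factors of $(-1)^{d(d-1)/2}$ that must cancel), and match the resulting sequence against the definition $i_a=\lambda_{d+1-a}+a$. An alternative route would use the semi-infinite Boson--Fermion correspondence of \S5.4 directly, invoking Proposition~\ref{prop:facts} to identify wedge vectors with Schur functions; however, since the full Heisenberg operator $\alpha_i$ annihilates the natural semi-infinite extension of $W$, one would have to work in a higher-charge sector with care, and the finite-wedge framework above gives a more transparent calculation.
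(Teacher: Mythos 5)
Your proposal does not address the statement you were asked to prove. Proposition \ref{prop:facts} is the classical boson--fermion correspondence identity $\phi(v_{m_0}\wedge v_{m_1}\wedge\cdots)=s_\lambda$ for a semi-infinite wedge, which the paper simply imports from the literature. What you have written is instead a proof sketch of Proposition \ref{prop:numer}, namely $\kappa_d(s_\lambda)^\perp\cdot g_{R_d}=D(d-i_1,\ldots,d-i_d)$: the operators $\alpha_i^{(d)}$, the map $\Psi_d$ from finite wedges to $D$-symbols, the appeal to Lemma \ref{lemma:4.2} and to $\Psi_d(W)=g_{R_d}$, and your target identity $(\star)$ all belong to that argument, not to the statement at hand. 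Your closing remark even proposes to \emph{invoke} Proposition \ref{prop:facts} as an alternative route, so the proposition to be proved enters your write-up only as an assumed input; offered as a proof of \ref{prop:facts} itself, the argument would be circular, and offered as written, it leaves the statement unproved.

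That said, the finite-wedge computation you outline for $(\star)$ --- identifying $v_{\theta_1}\wedge\cdots\wedge v_{\theta_d}$ with the bialternant $\det(x_j^{\theta_i})$, recognizing $\alpha_i^{(d)}$ as multiplication by $p_i(x^{-1})$, and applying $s_\lambda(y)\,a_\delta(y)=a_{\lambda+\delta}(y)$ --- is essentially the $d$-variable truncation of one standard proof of the charge-zero boson--fermion correspondence. If you want to prove Proposition \ref{prop:facts} itself, you must work with the semi-infinite wedge and the map $\phi$ as characterized in the paper ($\phi(\Omega)=1$, $\phi(\alpha_{-m}v)=p_m\phi(v)$): for instance, compute $\exp\bigl(\sum_{i\geq 1}\alpha_{-i}t_i\bigr)\Omega$ using the Heisenberg relations and match coefficients against the Jacobi--Trudi determinant $s_\lambda=\det(h_{\lambda_i-i+j})$, or carry out your finite-variable bialternant argument for the first $N$ factors and pass to the limit $N\to\infty$, checking stability in $N$. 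Neither step appears in your proposal.
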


\subsubsection{Proof of Proposition \ref{prop:numer}.}
\begin{proof}

Consider the subset $\mathscr{M}_d\subset \mathscr{M}$ which is defined by
\[
\mathscr{M}_d=\{(m_j)\in \mathscr{M}\,\vert\,m_j=-j \;(j\geq d)\}
\]
and the subspaces 
$\mathscr{F}_{d}, \mathscr{W}_{d}$ of $ \mathscr{F}$ defined by
\begin{align*}
\mathscr{F}_d=\bigoplus_{M\in \mathscr{M}_d}\CC\cdot v_{M},\quad
\mathscr{W}_d=\bigoplus_{M\in \mathscr{M}\setminus \mathscr{M}_d}\CC\cdot v_{M}.
\end{align*}
The space $\mathscr{F}$ decomposes as
$\mathscr{F}=\mathscr{F}_d\oplus \mathscr{W}_d$.
Let $v\mapsto \overline{v}, \mathscr{F}\rightarrow \mathscr{F}_d$ be the projection.

Let $\iota:\mathscr{F}_d\to \Lambda$ be the linear map
\[
v_{m_0}\wedge\cdots \wedge v_{m_{d-1}}\wedge v_{-d}\wedge v_{-d-1}\wedge\cdots \mapsto D(-m_{d-1},\dots, -m_0).
\]
Note that $\iota(\Omega)=D(d-1,d-2,\ldots,0)=g_{R_d}.$ 
Lemma \ref{lemma:4.2} can be rewritten as 
\begin{equation}
\kappa_d (p_i)^\perp\cdot\iota(v)=\iota\left(\overline{\alpha_{-i}(v)}\right),\qquad v\in\mathscr{F}_d.
\end{equation}
For $v\in \mathscr{F}_d$ define 
$\hat s_\lambda(v)=\overline{
s_\lambda\cdot 
v}\in \mathscr{F}_d$, where the Schur function
$s_\lambda $ acts on $\mathscr{F}$ via the identification $p_m\mapsto \alpha_{-m}\;(m\geq 1).$
Accordingly we have
\begin{equation}\label{eq:key}
\kappa_d(s_\lambda)^\perp\cdot \iota(v)=\iota
\left( \hat s_\lambda(v)\right),\qquad v\in\mathscr{F}_d.
\end{equation}
For a partition $\lambda$ with $\ell(\lambda)\leq d$
we have the equation
\begin{align*}
&\hat s_\lambda\cdot \Omega\\
&=\phi^{-1}(s_\lambda\cdot 1)\\
&
=v_{\lambda_1}\wedge v_{\lambda_2-1}\wedge \cdots
\wedge v_{\lambda_d-d+1}\wedge v_{-d}\wedge v_{-d-1}\wedge \cdots
\quad (\mbox{Prop.}\; \ref{prop:facts})
\\
&
=v_{i_d-d}\wedge v_{i_{d-1}-d}\wedge \cdots
\wedge v_{i_1-d}\wedge v_{-d}\wedge v_{-d-1}\wedge \cdots \quad (\mbox{by} \quad (\ref{eq:ilambda})),
\end{align*}
and hence by substituting $v=\Omega$ to (\ref{eq:key}) we have
$$
\kappa_d(s_\lambda)^\perp \cdot g_{R_d}
=\kappa_d(s_\lambda)^\perp\cdot \iota(\Omega)
=\iota(\hat s_\lambda\cdot \Omega)
=D(i_1-d,\ldots,i_d-d).
$$
\end{proof}

\section{Discussion of Conjecture \ref{tildeg}}
The aim of this section to explain some details about Conjecture \ref{tildeg}
for the image of the quantum Grothendieck polynomials
by $\Phi$ associated with arbitrary permutations.

\subsection{$\lambda$-map}\label{ssec:lambda}

Recall that we set $k=n-1$. Let $ \mathcal{B}_{k}$ denote the set of $k$-bounded partitions. 
We recall the definition of a map 
$\lambda:S_n\rightarrow \mathcal{B}_{k}$ 
due to Lam and Shimozono \cite[\S 6]{lamshimo2010toda}.
For $0\leq i\leq n-2$, let $c_i$ denote the cyclic permutation $(i+1,i+2,\cdots,n)$, and
$C$ denote the cyclic subgroup generated by $c_0=(12\cdots n)$. 
For $w\in S_n$, let $\tilde{w}$ be the unique element in the coset $C\cdot w$ such that
$\tilde{w}(1)=1.$ There is a unique sequence $(m_1,\ldots,m_{n-2})$ of non-negative integers such that 
\begin{equation}
\tilde{w}=c_1^{m_1} c_2^{m_2}\cdots c_{n-2}^{m_{n-2}}\quad 
(0\leq m_i\leq k-i).
\end{equation}
Define $\lambda(w)=(1^{m_1}2^{m_2}\cdots (n-2)^{m_{n-2}})$, the partition
whose multiplicity of $i\;(1\leq i\leq n-2)$ is $m_i.$ 


\begin{lemma}\label{lem:kconj}
Let $\mu$ be a partition contained in $R_d$. We have
$
\lambda(w_{\mu,d})^{\omega_k}=\mu^\vee.
$  
\end{lemma}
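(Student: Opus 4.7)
The plan is to show $\lambda(w_{\mu,d}) = (\mu^\vee)'$ by explicit computation, and then to conclude using that $\mu^\vee \subset R_d$ is an $n$-core on which $k$-conjugation coincides with ordinary conjugation.

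To compute $\lambda(w_{\mu,d})$, set $w := w_{\mu,d}$. Since $w(1) = \mu_d + 1$, the normalized representative is $\tilde w := c_0^{-\mu_d} w$, satisfying $\tilde w(a) = \mu_{d+1-a} + a - \mu_d$ for $1 \leq a \leq d$. Define the candidate exponents
\[
m_j := \mu_{d-j} - \mu_{d-j+1}\ (1 \leq j \leq d-1),\quad m_d := n - d - \mu_1,\quad m_j := 0\ (d < j \leq n-2),
\]
which lie in the required range $0 \leq m_j \leq n-1-j$. I will verify $\tilde w = c_1^{m_1} c_2^{m_2} \cdots c_{n-2}^{m_{n-2}}$ directly. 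By the uniqueness of this decomposition for permutations fixing $1$, this identifies $\lambda(w_{\mu,d}) = (1^{m_1} 2^{m_2} \cdots (n-2)^{m_{n-2}})$. On the other hand, the multiplicity of $j$ in $(\mu^\vee)'$ equals $(\mu^\vee)_j - (\mu^\vee)_{j+1}$, and substituting $(\mu^\vee)_i = n-d-\mu_{d+1-i}$ for $i \leq d$ (with $0$ for $i > d$) gives precisely the same $m_j$. Hence $\lambda(w_{\mu,d}) = (\mu^\vee)'$.

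For the second step, the maximum hook length in $R_d$ is $(n-d) + d - 1 = n - 1 < n$, so every partition $\nu \subset R_d$ is an $n$-core, equivalently a $(k+1)$-core; the same holds for its ordinary conjugate $\nu' \subset R_{n-d}$. For $(k+1)$-cores, the $k$-conjugate agrees with the ordinary conjugate, hence
\[
\lambda(w_{\mu,d})^{\omega_k} = \bigl((\mu^\vee)'\bigr)^{\omega_k} = \bigl((\mu^\vee)'\bigr)' = \mu^\vee.
\]

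The main obstacle is the verification $\tilde w = c_1^{m_1} \cdots c_d^{m_d}$. Since each $c_r^{m_r}$ cyclically rotates $\{r+1,\ldots,n\}$, tracking the image at a position requires case-splits depending on whether the running value lies above or below the wrap. For $1 \leq a \leq d$, the telescoping identity $m_1 + \cdots + m_{j-1} = \mu_{d-j+1} - \mu_d$ absorbs all shifts cleanly into the formula for $\tilde w(a)$. For $a = d+1$ one needs an additional wrap from $c_d^{m_d}$; in the sub-case $\mu_d = 0$ the value $\tilde w(d+1) = d - \ell(\mu) + 1$ is small, forcing an extra wrap inside the first non-trivial $c_r^{-m_r}$, after which the computation reduces to the generic one. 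For $a > d+1$, the values are obtained from the description $j_b - \mu_d \pmod n$ of the complement of the $\{i_a\}$.
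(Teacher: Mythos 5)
The proposal is correct and follows essentially the same route as the paper: identify $\lambda(w_{\mu,d}) = (\mu^\vee)'$ and then invoke that $k$-conjugation restricts to ordinary conjugation on partitions fitting inside a $k$-rectangle (you re-derive this from the $(k+1)$-core characterization, whereas the paper cites Lapointe--Morse). The only noteworthy presentational difference is that you handle $\mu_d>0$ directly by building the telescoping formula for the exponents $m_j$ into the computation of $\tilde w=c_0^{-\mu_d}w$, while the paper first treats the case $\mu_d=0$ and then reduces the general case by observing that $R_{d+l}\setminus\tilde\mu$ and $R_d\setminus\mu$ have the same shape.
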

\begin{proof} We write $w=w_{\mu,d}.$
We first assume that $w(1)=1$, that is $\mu_d=0$. It is straightforward to see
$w=c_1^{m_1}c_2^{m_2}\cdots c_{d-1}^{m_{d-1}}c_d^{m_d}$
with
$
m_i=w(i+1)-w(i)-1\;(1\leq i\leq d-1),$ and 
$m_d=n-w(d).
$
Note that the Young diagram of $\lambda(w)=(1^{m_1}\cdots d^{m_d})$ is contained in 
the conjugate of $R_d$, that is $R_{n-d}.$ 
If we consider the complement $\mu^c=R_d\setminus \mu$ of $\mu$ in the rectangle $R_d$, 
the number of columns in $\mu^c$ having $i$ boxes is $m_i.$ 
The diagram of $\mu^\vee$ is obtained from $\mu^c$ by a rotation of 
$180$ degrees. Now the conjugate of $\mu^\vee$ is nothing but $\lambda(w)$.
Since the $k$-conjugate of a partition contained in $R_d$ is the ordinary conjugate of it (\cite[Remark 10]{LapointeMorse2005JCTA}), the lemma follows in this case. If $w(i)>1$, then $\tilde w=c_0^{-w(1)+1}w$ is $(d+l)$-Grassmannian with some $l$ such that $1\leq l\leq k-d$. Let $\tilde \mu
\subset R_{d+l}$ be the corresponding partition. One can check that $\tilde\mu^c=R_{d+l}\setminus \tilde \mu$ has the same shape as 
$\mu^c=R_d\setminus \mu,$ and hence the proof is reduced to the case when $w(1)=1.$
\end{proof}

A $k$-bounded partition $\mu$ 
is $k$-{\it irreducible\/} if there is no $k$-rectangle $R_d$ such that $\mu
=R_d\cup \nu\;(\nu\in \mathcal{B}_k)$.
This is equivalent to the inequalities  
$0\leq m_i\leq k-i\;(1\leq i\leq n-2),$ where $m_i$ is the multiplicity of $i$ in $\mu.$ 
Accordingly the image of the map $\lambda$
is contained in the set $\mathcal{B}_k^*$ of all irreducible $k$-bounded partitions. 
In fact, one can easily see that $\mathcal{B}_k^*$  coincides the image of the $\lambda$-map.

Let $S_n^*$ denote the set of permutations $w$ in $S_n$ such that $w(1)=1.$ The set 
$S_n^*$ is a complete representatives of the coset space $C\backslash S_n.$ 
In particular, the cardinality of $\mathcal{B}_k^*$ is $(n-1)!$

%
%
Recall that there is a remarkable 
involution $\omega_k: \mathcal{B}_k\rightarrow \mathcal{B}_k, \; \mu\mapsto \mu^{\omega_k}$ due to Lapointe and Morse \cite{LapointeMorse2005JCTA}. One of the important properties is $\omega (s_\mu^{(k)})=s_{\mu^{\omega_k}}^{(k)}$ \cite[Theorem 38]{LapointeMorse2007} where $\omega$ is the involution on $\Lambda$ sending $s_\lambda$ to $s_{\lambda'}.$ 
One can see that $\omega_k$ preserves
$\mathcal{B}_k^*.$

\begin{example} The following tables give $\lambda(w)$ and its $k$-conjugate for $w\in S_n^*$ for $n=4,5.$
The asterisk sign indicates that the permutation is not Grassmannian.

$$
\begin{tiny}
\begin{array}{|l|l|l|}
\hline
w & \lambda(w)& \lambda(w)^{\omega_3}\\\hline
{1234} & \emptyset & \emptyset \\\hline
{1243} & (2) & (1,1)\\\hline
{1324} & (2,1) & (2,1)\\\hline
{1342} & (1) & (1)\\\hline
{1423} & (1,1)& (2)\\\hline
{1432}^* & (2,1,1)& (2,1,1)\\\hline
\end{array}
\qquad
\begin{array}{|l|l|l|}
\hline
w & \lambda(w)&\lambda(w)^{\omega_4}\\\hline
{12345} & \emptyset & \emptyset \\\hline
{12354} & (3) & (1,1,1)\\\hline
{12435} & (3,2) &(2,2,1)\\\hline
{12453} & (2)& (1,1)\\\hline
{12534} & (2,2)& (2,2)\\\hline
{12543}^* &{(3,2,2)}& (2,2,1,1,1)\\\hline
{13245} & (2,2,1) &(3,2)\\ \hline
13254^* & {(3,2,2,1)}&(3,2,1,1,1) \\\hline
{13425} & (3,1)& (2,1,1)\\\hline
{13452} & (1)& (1)\\ \hline
{13524} & (2,1) & (2,1)\\ \hline
13542^* & {(3,2,1)}&(2,2,1,1) \\\hline
{14235} & (2,1,1)& (3,1)\\ \hline
14253^* & {(3,2,1,1)}& (3,2,1,1)\\\hline
14325^* & {(3,2,2,1,1)}&(3,2,2,1,1) \\ \hline
14352^* & {(2,2,1,1)}& (3,2,1)\\ \hline
{14523} & (1,1)& (2)\\ \hline
14532^* & {(3,1,1)}& (2,1,1,1)\\ \hline
{15234} & (1,1,1)& (3) \\ \hline
15243^* & {(3,1,1,1)}& (3,1,1,1)\\ \hline
15324^* & {(3,2,1,1,1)}& (3,2,2,1)\\ \hline
15342^* & {(2,1,1,1)}& (3,1,1)\\\hline
15423^* & {(2,2,1,1,1)}& (3,2,2)\\\hline
15432^* & {(3,2,2,1,1,1)}&(3,2,2,1,1,1) \\\hline
\end{array}
\end{tiny}
$$
\end{example}

\subsection{Examples}

\begin{example} Let
$n=5$.
We first note some elements of $\tilde g_w$ are factored into a product of dual stable Grothendieck polynomials:
$$
\begin{tiny}
\begin{array}{|c|c|}
\hline
w & \tilde g_w\\\hline
12543&g_{1,1,1}\cdot g_{2,2}\\\hline
13254 & g_{1,1,1}\cdot g_{3,2}\\\hline
14532 &g_{1,1,1}\cdot g_2\\\hline
15243 &g_{1,1,1}\cdot g_3\\\hline 
15324 & g_{2,2,1}\cdot g_{3}\\\hline
15342 & g_{1,1}\cdot g_3\\\hline
15423 & g_{2,2}\cdot g_3\\\hline
15432 & g_{1,1,1}\cdot g_{2,2}\cdot g_3\\\hline
\end{array}
\end{tiny}
$$

Using Sage, we obtain the following expansion of $\tilde g_w$ in terms of $K$-theoretic $k$-Schur functions.
Here we only write the non-Grassmannian elements in $S_5^*.$ 

\begin{align*}
\tilde g_{12543}
&=g_{2,2,1,1,1}^{(4)}-g_{2,2,1,1}^{(4)},\\
\tilde g_{13254}&=
g_{3,2,1,1,1}^{(4)}-g_{3,2,1,1}^{(4)},\\
\tilde g_{13542}
&=g_{2,2,1,1}^{(4)}-g_{2,2,1}^{(4)},\\
\tilde g_{14253}
&=g_{3,2,1,1}^{(4)},\\
\tilde g_{14325}
&= g_{3,2,2,1,1}^{(4)}
-g_{3,2,1,1,1}^{(4)}
-g_{3,2,2,1}^{(4)}
+g_{3,2,1,1}^{(4)},\\
\tilde g_{14352}&=
g_{3,2,1}^{(4)}-g_{3,2}^{(4)},\\
\tilde g_{14532}&=
g_{2,1,1,1}^{(4)}-g_{2,1,1}^{(4)},
\\
\tilde g_{15243}&= g_{3,1,1,1}^{(4)},\\
\tilde g_{15324}&=g_{3,2,2,1}^{(4)}-g_{3,2,1,1}^{(4)}, \\
\tilde g_{15342}&= g_{3,1,1}^{(4)}-g_{3,1}^{(4)},\\
\tilde g_{15423}&=g_{3,2,2}^{(4)}-g_{3,2,1}^{(4)}, \\
\tilde g_{15432}
&=g_{3,2,2,1,1,1}^{(4)}-g_{3,2,1,1,1,1}^{(4)}
-2g_{3,2,2,1,1}^{(4)}
-g_{4,2,2,1}^{(4)}
+g_{3,2,1,1,1}^{(4)}
+g_{3,2,2,1}^{(4)}
-g_{3,2,1,1}^{(4)}.
\end{align*}
\end{example}

\bigskip

We finally provide one conjecture below. 

\begin{conjecture} Let $w_0\in S_n$ denote the longest element.
Then
$$
\tilde g_{w_0}=\prod_{i=1}^{n-2}g_{(n-1-i)^i}.
$$
\end{conjecture}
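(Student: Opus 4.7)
My plan is to reduce the conjecture, via a factorization of $\mathfrak{G}_{w_0}$ compatible with Lenart--Maeno's quantization map $\widehat Q$, to the Grassmannian case already handled by Theorem \ref{thm:Grass}. For $1 \leq j \leq n-1$, I set $u_j := w_{(1^j), j}$, the $j$-Grassmannian permutation associated with the single-column partition $(1^j) \subset R_j$. Since $\mathfrak{G}_{u_j}(x) = G_{(1^j)}(x_1,\dots,x_j) = x_1 x_2 \cdots x_j$, I would start from the polynomial identity
$$\mathfrak{G}_{w_0} \;=\; \prod_{j=1}^{n-1} x_j^{\,n-j} \;=\; \prod_{j=1}^{n-1}\mathfrak{G}_{u_j} \qquad\text{in } \mathbb{C}[x_1,\dots,x_n].$$

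Next, I will rewrite each factor using only generators of a single upper index. From $x_i = 1-(1-x_i)$ one obtains
$$\mathfrak{G}_{u_j} \;=\; \prod_{i=1}^{j}\bigl(1-(1-x_i)\bigr) \;=\; \sum_{k=0}^{j}(-1)^{k}\,e_{k}(1-x_1,\dots,1-x_j) \;=\; \sum_{k=0}^{j}(-1)^{k}f_{k}^{(j)}.$$
Substituting into the previous display, the product $\prod_j\sum_{k_j}(-1)^{k_j}f_{k_j}^{(j)}$ expands as a linear combination of distinct monomials $\prod_j f_{k_j}^{(j)}$ with $0\leq k_j\leq j$, giving an honest expansion of $\mathfrak{G}_{w_0}$ in the Lenart--Maeno basis (\ref{eq:Fbasis}) of $L_n$. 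Because $\widehat Q$ (Definition \ref{defn:hatQ}) is defined by sending each such monomial to $\prod_j F_{k_j}^{(j)}$, applying $\widehat Q$ term-by-term and refactoring immediately yields
$$\mathfrak{G}_{w_0}^{Q} \;=\; \prod_{j=1}^{n-1}\Bigl(\sum_{k=0}^{j}(-1)^{k}F_{k}^{(j)}\Bigr) \;=\; \prod_{j=1}^{n-1}\mathfrak{G}_{u_j}^{Q} \qquad\text{in } \mathcal{QK}(Fl_n).$$

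To finish, I will apply the ring isomorphism $\Phi_n$ (Corollary \ref{cor:KPisom}) and invoke Theorem \ref{thm:Grass} for $\lambda = (1^j)$ (for which $\lambda^{\vee} = (n-1-j)^j$), obtaining $\Phi_n(\mathfrak{G}_{u_j}^{Q}) = g_{(n-1-j)^j}/\tau_j$; the $j=n-1$ factor contributes $g_{\emptyset}/\tau_{n-1} = 1/\tau_{n-1}$. Multiplying over $j$,
$$\Phi_n(\mathfrak{G}_{w_0}^{Q}) \;=\; \prod_{j=1}^{n-1}\frac{g_{(n-1-j)^j}}{\tau_j} \;=\; \frac{\prod_{j=1}^{n-2}g_{(n-1-j)^j}}{\prod_{j=1}^{n-1}\tau_j}.$$
Since $\mathrm{Des}(w_0) = \{1,\dots,n-1\}$, the defining relation of Conjecture \ref{tildeg} gives $\tilde g_{w_0} = \Phi_n(\mathfrak{G}_{w_0}^{Q})\cdot\prod_{i=1}^{n-1}\tau_i = \prod_{i=1}^{n-2}g_{(n-1-i)^i}$, as claimed. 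As a bonus, the polynomiality asserted by Conjecture \ref{tildeg} is automatic in this case, since the right-hand side visibly lies in $\Lambda_{(n)}$.

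The only delicate point I anticipate is the verification that the expansion of $\mathfrak{G}_{w_0}$ produced in the second paragraph is an identity literally in $L_n$, and not merely modulo the relations defining $K(Fl_n)$, so that $\widehat Q$ may be applied unambiguously. This is ultimately painless: every summand $\prod_j f_{k_j}^{(j)}$ meets the basis bound $0\leq k_j\leq j$ on the nose, placing the whole expansion inside (\ref{eq:Fbasis}) without any reduction step.
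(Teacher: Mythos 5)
The paper itself does not prove this statement; it is left as a conjecture with no supporting argument, so there is no proof of record to compare against. Your argument appears to be correct and complete, and in fact proves slightly more than is asked: since you compute $\Phi_n(\mathfrak{G}_{w_0}^Q)\cdot\prod_{i=1}^{n-1}\tau_i$ explicitly and find it equal to $\prod_{i=1}^{n-2}g_{(n-1-i)^i}\in\Lambda_{(n)}$, you also verify the polynomiality asserted by Conjecture~\ref{tildeg} for $w=w_0$, so the result is unconditional rather than contingent on that earlier conjecture.

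The crucial idea is the observation that the staircase monomial factors as $\mathfrak{G}_{w_0}=\prod_{j=1}^{n-1}\mathfrak{G}_{u_j}$ with $u_j=w_{(1^j),j}$ and $\mathfrak{G}_{u_j}=x_1\cdots x_j=\sum_{k=0}^{j}(-1)^k f_k^{(j)}$, so that each factor expands in the level-$j$ generators $f_k^{(j)}$ alone. Because the basis (\ref{eq:Fbasis}) is a product basis across levels and Lenart--Maeno's $\widehat Q$ is defined on it multiplicatively, expanding the product and applying $\widehat Q$ termwise gives $\mathfrak{G}_{w_0}^Q=\prod_j\mathfrak{G}_{u_j}^Q$; note this is a special feature of this particular factorization, as $\widehat Q$ is not a ring homomorphism in general. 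After that, Theorem~\ref{thm:Grass} applied to $\lambda=(1^j)\subset R_j$ (with $\lambda^\vee=((n-1-j)^j)$, and the $j=n-1$ factor degenerating to $g_\emptyset/\tau_{n-1}=1/\tau_{n-1}$) gives $\Phi_n(\mathfrak{G}_{u_j}^Q)=g_{(n-1-j)^j}/\tau_j$, and the product telescopes as claimed since $\mathrm{Des}(w_0)=\{1,\dots,n-1\}$. Sanity checks against the paper's own data reinforce this: for $n=5$, $\lambda(w_0)=\lambda(15432)$ and the table in Section~7 lists $\tilde g_{15432}=g_{1,1,1}\cdot g_{2,2}\cdot g_3$, which matches $\prod_{i=1}^{3}g_{(4-i)^i}$; for $n=3$ one gets $\tilde g_{321}=h_1$, also consistent. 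Every ingredient you invoke (Corollary~\ref{cor:KPisom}, Theorem~\ref{thm:Grass}, Definition~\ref{defn:hatQ}, the basis~(\ref{eq:Fbasis})) is a proved statement in the paper, so nothing conjectural is being used. This would be a worthwhile remark to add to Section~7, since it upgrades one item of the paper's final conjecture to a theorem.
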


\bigskip

We are informed that 
a general formulation of $K$-theoretic Peterson isomorphism
valid for a simple simply connected $G$ is proposed by 
 Thomas Lam, Changzheng Li, Leonardo Mihalcea, and Mark Shimozono in \cite{LLMS}.
 We express thanks to them for 
communicating their work to us and for valuable discussions.
We are grateful to Michael Finkelberg for valuable discussions.  
He kindly showed us his interpretation of 
$\tau$-functions in terms of the Zastava space. 
During the preparation of the paper
we also benefitted from the 
helpful discussions with and comments
of many 
people, including 
Anders Buch, 
Rei Inoue, 
Hiroshi Iritani, 
Bumsig Kim, 
Cristian Lenart, 
Jennifer Morse,
Satoshi Naito, 
Hiraku Nakajima, 
Hiroshi Naruse, 
Kyo Nishiyama, 
Masatoshi Noumi, 
Kaisa Taipale, 
Kanehisa Takasaki, 
Motoki Takigiku, 
Vijay Ravikumar, and 
Chris Woodward.
We also thank Andrea Brini who drew our attention to \cite{KruglinskayaMarshakov2015}.
Special thanks are due to Tomoo Matsumura for showing us the proof of Proposition \ref{prop:LR}.   In order to discover and check Conjecture 2, we used the open source mathematical software Sage \cite{SageMath}.
The work was supported by JSPS KAKENHI [grant numbers 15K04832 to T.I., 26800062 to S.I., 16K05083 to T.M.].

\end{document}